\definecolor{darkolivegreen}{rgb}{0.43, 0.52, 0.28}
\DeclareMathOperator*{\argmin}{argmin}
\DeclareMathOperator*{\sgn}{sgn}
\newcommand{\ds}{\displaystyle}
\newcommand{\nexto}{\kern -0.54em}
\newcommand{\dR}{{\rm {I\ \nexto R}}}
\newcommand{\dZ}{{\cal Z \kern -0.7em Z}}
\newcommand{\dC}{{\rm\hbox{C \kern-0.8em\raise0.2ex\hbox{\vrule
height5.4pt width0.7pt}}}}
\newcommand{\dQ}{{\rm\hbox{Q \kern-0.85em\raise0.25ex\hbox{\vrule
height5.4pt width0.7pt}}}}
\newcommand{\proofbox}{\hspace{\fill}{$\Box$}}
\newtheorem{lemma}{Lemma}
\newtheorem{theorem}[lemma]{Theorem}
\newtheorem{corollary}[lemma]{Corollary}
\newtheorem{proposition}[lemma]{Proposition}
\newtheorem{remark}[lemma]{Remark}
\newtheorem{assumption}[lemma]{Assumption}
\newenvironment{proof}{Proof.}{\proofbox}
\DeclareMathOperator{\sinc}{sinc}
\begin{document}

\author{
C. Yal{\c c}{\i}n Kaya\footnote{School of Mathematical Sciences, Adelaide University, Adelaide SA 5005, Australia. \\ E-mail: yalcin.kaya@adelaide.edu.au\,.}
\and
Lyle Noakes\footnote{Department of Mathematics and Statistics, The University of Western Australia, Nedlands WA 6009, Australia. Email: Lyle.Noakes@uwa.edu.au\,.}
\and
Philip Schrader\footnote{School of Mathematics, Statistics, Chemistry and Physics, Murdoch University, Murdoch WA 6150, Australia. Email: Phil.Schrader@murdoch.edu.au\,.}
}

\title{\vspace{0mm}\bf Curves of Minimax Curvature}

\date{June 20, 2026}

\maketitle

\vspace*{-5mm}
\begin{abstract} 
We consider the problem of finding curves of minimum pointwise-maximum curvature, i.e., curves of minimax curvature, among planar curves of fixed length with prescribed endpoints and tangents at the endpoints. 
We reformulate the problem in terms of optimal control and use the maximum principle, as well as some geometrical arguments, to produce a classification of the types of solutions. Using the classification,  we devise a numerical method which reduces the infinite-dimensional optimization problem to a finite-dimensional problem with just six variables. The solution types, together with some further observations on optimality,  are illustrated via numerical examples.
\end{abstract}

\begin{verse} {\bf Key words.} {\sf Minimax curvature, Markov--Dubins path, Optimal control, Singular control, Bang--bang control.}
\end{verse}

\begin{verse} {\bf AMS subject classifications.} {\sf Primary 49J15, 49K15\ \ Secondary 65K10, 90C30}
\end{verse}

\pagestyle{myheadings}
\thispagestyle{plain}
\markboth{\sf\scriptsize C.~Y.~Kaya, L.~Noakes, and P.~Schrader}{\sf\scriptsize Curves of Minimax Curvature\ \ by\ C.~Y.~Kaya, L.~Noakes, and P.~Schrader}

\section{Introduction}

We are interested in finding a curve $z:[0,t_f]\longrightarrow\dR^2$ which minimizes the almost everywhere~(a.e.) pointwise maximum curvature of $z(t)$, i.e. the $L^\infty$-norm of the curvature, among all curves having fixed length; prescribed endpoints $z_0$ and $z_f$ at $0$ and $t_f$ respectively; and prescribed velocities $v_0$ and $v_f$ at the endpoints. Since the curvature is independent of parametrization we will consider only curves which are parametrized with respect to arc length so that the curvature is $\Vert \ddot z(t) \Vert$, and the parameter $t_f$ is the length of the whole curve (and therefore $t_f$ is fixed). The problem can then be posed as
\[
\mbox{(P)}\left\{\begin{array}{rl}
\ds\min_{z(\cdot)} &\ \ds\max_{t\in[0,t_f]}\|\ddot {z}(t)\| \\[4mm]
\mbox{s.t.} &\ z(0) = z_0\,,\ z(t_f) = z_f\,,\\[2mm]
  &\ \dot{z}(0) = v_0\,,\ \dot{z}(t_f) = v_f\,,\\[2mm]
   &\ \|\dot{z}(t)\| = 1\,,\mbox{ for a.e. }  t\in[0,t_f]\,,
\end{array}\right.
\]
where $\dot{z} = dz/dt$, $\ddot{z} = d^2z/dt^2$, $\|\cdot\|$ is the Euclidean norm, and $v_0$ and $v_f$ are given such that $\|v_0\| = \|v_f\| = 1$.  

If the $L^\infty$-norm is replaced by the squared $L^2$-norm, of $\ddot {z}$, namely $\int_0^{t_f} \|\ddot {z}(t)\|^2\,dt$, then  critical curves for Problem~(P) become the celebrated (fixed-length) {\em Euler's elastica}.  The $L^p$-norm generalizations of the Euler elastica problem for $p\in[1,\infty)$, where $\int_0^{t_f} \|\ddot {z}(t)\|^2\,dt$ is replaced by $\int_0^{t_f} \|\ddot {z}(t)\|^p\,dt$, have been referred to as the {\em $p$-elastica} problem in the literature---see~\cite{Huang2004}.  Similarly, potential minimizers of Problem~(P) have been called the {\em $\infty$-elastica} in~\cite{moser2022}. Here we refer to (P) as the problem of finding {\em curves of minimum pointwise-maximum curvature}, or simply {\em curves of minimax curvature}, because it is more descriptive and we are ultimately focused on finding global minimizers.

Another problem related to (P) is the Markov--Dubins problem, in which one is to minimize this time the (free) length $t_f$ of the curve of interest subject to a fixed bound $a$ imposed on the curvature, while the remaining constraints appearing in (P) are kept in place.  This problem was first proposed and some instances studied by Andrey Markov in 1889~\cite{Markov1889} (also see~\cite{KreNud1977}) but fully solved by Lester Dubins only in 1957~\cite{Dubins1957}.  Dubins' result characterizes minimum-length solutions in terms of concatenations of  straight lines and circular arcs: Let a straight line segment be denoted by an $S$ and a circular arc of the maximum allowed curvature $a$ (or, turning radius $1/a$) by a $C$, then the shortest curve is a concatenation of type $CCC$, or of type $CSC$, or a subset thereof. 

The Markov--Dubins path can be viewed as the shortest path of a car modelled as a point mass which goes only forwards at unit speed under the constraints described above. In 1990, Reeds and Shepp \cite{ReeShe1990} considered a car which goes not only forwards but also backwards, as an extension of the Markov–Dubins problem.  In 1991, Boissonnat, Cérézo and Leblond \cite{BoiCerLeb1991} (also see \cite{BoiCerLeb1994}), and Sussmann and Tang \cite{SusTan1991}, independently employed optimal control theory, and additional techniques, to derive the same results as those obtained by Reeds and Shepp.  They also recovered Dubins' result as a special case of the Reeds and Shepp model.  There have since been many extensions and applications of the Markov--Dubins problem---see, for example, \cite{BuiBoiSouLau1994, SouLau1996, Sussmann1995, Sussmann1997, Kaya2019}.  More recently, optimal control theory has also been employed in \cite{Kaya2017, Kaya2019} to derive the existing, as well as some new, results and to devise numerical methods to compute the Markov--Dubins paths---the reader is referred to these two articles for a more in-depth literature survey on the Markov--Dubins problem.

The first published work on Problem (P) appears to be that of Moser \cite{moser2022} where it is studied for curves in $\mathbb R^n$, followed by the extension to Riemannian manifolds in \cite{gallagher2023elastica}. In \cite{moser2022} $\infty$-elastica are defined as minimizers of the sum of maximum curvature and a penalty term which is a kind of $L^2$-distance between curves. In order to obtain differential equations which characterize $\infty$-elastica, Moser studies the Euler--Lagrange equations for the sum of $p$-elastic energy with the same penalty term, in the limit $p\to \infty$. From the differential equations Moser obtains a classification of $\infty$-elastica as concatenations of circular arcs and straight lines, similar to the solutions of the Markov--Dubins problem, except that $\infty$-elastica can contain more than three pieces, and closed loops are allowed. If we denote by $O$ a full circle with the maximum curvature, and $C,S$ as before, the conclusion is that $\infty$-elastica can be of type $CSOSOC$, perhaps with more or less loops and straight line segments, or of type $CC\ldots C$ with any number of $C$ segments and such that the interior segments have equal length and alternating orientation. 

The present work takes a fundamentally different approach to that of Moser: we show that by using the same technique applied in \cite{Kaya2017} and \cite{KayNoa2013}, Problem (P) can be reformulated as an optimal control problem without any approximation by $p$-elastic energy. As usual, the maximum principle then provides us with necessary conditions for optimality in the form of differential equations for the state and costate variables, and constraints on the control. A careful analysis of these conditions -- in particular of the phase portrait of the switching function, which is very similar to the analysis of the Markov--Dubins problem in \cite{Kaya2017, Kaya2019} -- allows us to draw similar conclusions to Moser's about which types of curves  satisfy the necessary condition.

In the present paper, we are not only interested in classifying critical curves, we would also like to know which of the critical curves are {\em optimal} as well as devise an efficient numerical method to compute these curves. To this end, the necessary conditions obtained from the maximum principle are combined with a series of geometric arguments, for example Lemma \ref{CCCC} shows that a curve of type $CCCC$ cannot be optimal because  it can be deformed into a feasible curve with the same maximum curvature, but which does not satisfy the necessary conditions. These observations lead to our main result, a refined classification in Theorem~\ref{classification}:  any solution to Problem~(P) is of type $CCC$, or $CSC$, or $COC$, or $SOS$, or a subset thereof, with all circular arcs having the same maximum possible radius (or minimum possible maximum curvature). So we have the new result that, just like in the Markov--Dubins problem, solutions of problem (P) cannot consist of more than three segments. However, unlike in the Markov--Dubins problem, under specific conditions (Propositions \ref{SCS} and \ref{COC}), they can contain closed circular loops.

Since Problem (P) is related to the Markov--Dubins problem by interchange of constraint and cost, it is perhaps not surprising that the solutions are so similar. Indeed, a relationship between solutions of the two problems was established in \cite[Proposition 5]{moser2022}: a solution of the Markov--Dubins problem is a solution to Problem (P). In Theorem \ref{theo:MD&P} we give an alternative proof of this fact in terms of the optimal control formulation, and then demonstrate that the converse does not hold (Remark \ref{rem:MD&P}). 

We devise a numerical method, which, using the classification of the solution curves stated in Theorem~\ref{classification} and switching time optimization techniques, reduces the infinite-dimensional optimization problem~(P) to a finite-dimensional optimization problem~(Ps) with just six variables.  We carry out extensive numerical experiments.  We especially focus on solutions containing a loop to exemplify the specific results involving loops, as well as gain insights about those instances for which no specific results are available.

It is well known that some optimal control problems can produce solutions with {\em chattering arcs}, i.e., optimal control with infinitely many switchings, as in
Fuller’s phenomenon~\cite{Borisov2000}.  We show through Proposition~\ref{prop:GLC} and Remark~\ref{rem:chatter1} that Problem~(P) will not generate chattering arcs.  The authors of this paper have recently studied in~\cite{KayNoaSch2025} another problem related to (P), in that instead of minimizing the $L^\infty$-norm of the curvature, they considered minimizing the $L^\infty$-norm of the arc-length derivative of the curvature, resulting in the so-called {\em curves of minimax spirality}.  In Remark~\ref{rem:chatter2}, we discuss further that the minimax spirality problem can give rise to chattering arcs.  Numerical evidence to chattering arcs is presented in~\cite{KayNoaSch2025}.

In~\cite{KayNoaSch2025}, it is shown that a curve of minimax spirality is in general a concatenation of Euler spirals, unless the oriented endpoints are placed on a circle or along a line in geometrical agreement.  However, the maximum number of such (spiral) arcs needed for a solution cannot be prescribed, as opposed to Problem~(P) where a selection of just three circular arcs and straight lines is needed in total.  So, obtaining a numerical solution for the curves of minimax curvature is much easier than for the curves of minimax spirality.

The paper is organized as follows.  In Section~\ref{sec:reformulations}, we reformulate Problem~(P) as an optimal control problem. In Section~\ref{sec:max_principle}, we write down the maximum principle for the optimal control problem.  In Section~\ref{sec:classification}, we classify the types of solution for Problem~(P) and provide the main theoretical result as Theorem~\ref{classification}.  In Section~\ref{num_meth}, we develop a numerical method to solve Problem~(P), and provide examples of applications and numerical solutions using standard optimization software to illustrate the theoretical results.  Finally, in Section~\ref{sec:conclusion}, concluding remarks and possible directions for future work are provided.

\section{Reformulations}
\label{sec:reformulations}

Obviously, if the distance separating $z_0$ and $z_f$ is more than the fixed length $t_f$ then Problem~(P) has no solution.  In fact, with  $t_f = \|z_0-z_f\|$, a feasible solution may still not exist.  Consider the points $z_0 = (0,0)$ and $z_f = (1,0)$.  With $t_f = 1$, unless $v_0 = v_f = (1,0)$, there exists no feasible curve between $z_0$ and $z_f$. Therefore we make the following assumption.

\begin{assumption}\label{ass1}
$t_f > \|z_0-z_f\|$.
\end{assumption}

Problem~(P) is nonsmooth because of the $\max$ operator appearing in its objective functional.  On the other hand, Problem~(P) can be transformed into a variational problem for a smooth functional by using a standard technique from nonlinear programming, in the same way it was also done in \cite{KayNoa2013}:
\[
\mbox{(P1)}\left\{\begin{array}{rl}
\ds\min_{a, z(\cdot)} &\ a \\[2mm]
\mbox{s.t.} &\ z(0) = z_0\,,\ z(t_f) = z_f\,,\\[2mm]
  &\ \dot{z}(0) = v_0\,,\ \dot{z}(t_f) = v_f\,,\\[2mm]
   &\ \|\ddot {z}(t)\|\le a\,,\ \ \ \|\dot{z}(t)\| = 1\,,\mbox{ for a.e. }  t\in[0,t_f]\,,
\end{array}\right.
\]
where the bound $a \ge 0$ is a new optimization variable of the problem.

\begin{remark}  \label{rem:observation} \rm
One has the solution $a = 0$ if and only if the curve joining $z_0$ and $z_f$ is a straight line of length $t_f$, contradicting Assumption \ref{ass1}. So $a>0$.
\proofbox
\end{remark}

Problem~(P) can equivalently be cast as an optimal control problem as in \cite{Kaya2017} as follows. Let\linebreak $z(t) := (x(t), y(t))\in\dR^2$, with $\dot{x}(t) := \cos\theta(t)$ and $\dot{y}(t) := \sin\theta(t)$, where $\theta(t)$ is the angle the velocity vector $\dot{z}(t)$ of the curve $z(t)$ makes with the horizontal.  These definitions verify that $\|\dot{z}(t)\| = 1$.  Moreover,
\[
\|\ddot{z}\|^2 = \ddot{x}^2+\ddot{y}^2 = \dot{\theta}^2\,.
\]
Therefore, $|\dot{\theta}(t)|$ is nothing but the curvature.  In fact, $\dot{\theta}(t)$ itself, which can be positive or negative, is referred to as the {\em signed curvature}.  For example, consider a vehicle travelling along a circular path.  If $\dot{\theta}(t) > 0$ then the vehicle travels in the counter-clockwise direction, i.e., it {\em turns left}, and if $\dot{\theta}(t) < 0$ then the vehicle travels in the clockwise direction, i.e., it {\em turns right}.  If $\dot{\theta}(t) = 0$ then the vehicle travels along a straight line.

Suppose that the directions at the points $z_0$ and $z_f$ are denoted by the angles $\theta_0$ and $\theta_f$, respectively.  The curvature minimizing problem~(P1), or equivalently Problem~(P), can then be re-written as a ({\em parametric}) {\em optimal control problem}, where the objective functional is the maximum curvature $a$ (the {\em parameter}), and $x$, $y$ and $\theta$ are the {\em state variables} and $u$ is the {\em control variable}\,:
\[
\mbox{(Pc)}\left\{\begin{array}{rll}
\ds\min_{a, u(\cdot)} &\ \ds a & \\[2mm]
\mbox{s.t.} &\ \dot{x}(t) = \cos\theta(t)\,, & x(0) = x_0\,,\ 
              x(t_f) = x_f\,, \\[2mm] 
  &\ \dot{y}(t) = \sin\theta(t)\,, & y(0) = y_0\,,\ 
              y(t_f) = y_f\,,\\[2mm] 
  &\ \dot{\theta}(t) = u(t)\,, & \theta(0) = \theta_0\,,\ 
              \theta(t_f) = \theta_f\,,\\[2mm]
  & & |u(t)|\le a\,,\mbox{ for a.e. }  t\in[0,t_f]\,.
\end{array}\right.
\]

Note that with the unit speed constraint $\|\dot{z}(t)\| = 1$ in Problem~(P) the solution curve $z(t)$ is parametrized by arc length.  Therefore the terminal time $t_f$ is nothing but the length of the curve $z(t)$.  When the maximum allowed curvature $a$ is fixed, the length $t_f$ is allowed to vary, and the objective functional $a$ in Problem~(Pc) is replaced by $t_f$, Problem~(Pc) becomes the celebrated Markov--Dubins problem~\cite{Dubins1957,KreNud1977,Markov1889}, where one looks for the shortest curve between the oriented points $(x_0,y_0,\theta_0)$ and $(x_f,y_f,\theta_f)$, with a prescribed bound $a$ on its curvature:
\[
\mbox{(MD)}\left\{\begin{array}{rll}
\ds\min_{t_f, u(\cdot)} &\ \ds t_f & \\[2mm]
\mbox{s.t.} & \mbox{Constraints in Problem~(Pc) with $a$ fixed}.
\end{array}\right.
\]
Let a straight line segment be denoted by an $S$ and a circular arc segment of curvature $a$ (or, turning radius $1/a$) by a $C$.  Dubins' result is stated in terms of concatenations of type $S$ and type $C$ curve segments in the following theorem.
\begin{theorem}[Dubins' Theorem~\cite{Dubins1957}]  \label{Dubins}
A solution to Problem~{\em (MD)} exists, and any such solution, that is to say, any $C^1$ and piecewise-$C^2$ curve of minimum length in the plane between two prescribed endpoints, where the slopes of the curve at these endpoints as well as a bound on the pointwise curvature are also prescribed, is of type $CSC$, or of type $CCC$, or a subset thereof.
\end{theorem}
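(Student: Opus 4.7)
The plan is to combine a compactness argument for existence with Pontryagin's Maximum Principle applied to~(MD), followed by a phase-portrait analysis of the switching function to restrict the admissible concatenation patterns. Existence of a length-minimising curve follows in the spirit of Proposition~\ref{existence}: any admissible comparison curve furnishes an upper bound on $t_f$, so a minimising sequence $\{z_k\}$ has uniformly bounded length and uniformly bounded curvature, and after reparametrisation on a common interval Arzelà--Ascoli produces a $C^1$ limit satisfying the boundary data, with lower semicontinuity of length closing the argument.

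For the structural classification I would form the Pontryagin Hamiltonian
\[
H = \lambda_0 + \lambda_x\cos\theta + \lambda_y\sin\theta + \lambda_\theta\,u,
\]
derive the adjoint equations, and observe that $\lambda_x$ and $\lambda_y$ are constants while the switching function $\varphi:=\lambda_\theta$ satisfies $\dot\varphi=\lambda_x\sin\theta-\lambda_y\cos\theta$. Pontryagin's minimisation condition then gives $u=-a\,\sgn(\varphi)$ whenever $\varphi\neq 0$, producing circular arcs of radius $1/a$ (type $C$), while on any interval where $\varphi\equiv 0$ the further singular identity $\dot\varphi\equiv 0$ pins $\theta$ to a constant, producing a straight segment of type $S$. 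At this point the piecewise-$C^2$ structure asserted in the theorem is already forced, so the remaining task is to bound the number of pieces.

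The core of the proof is a phase-portrait analysis in the $(\varphi,\dot\varphi)$-plane, closely modelled on the treatment of the Markov--Dubins problem in~\cite{Kaya2017,Kaya2019}. Using conservation of $H$ together with $\dot\theta=u$ on bang arcs, one derives $\ddot\varphi+a^2\varphi=-u\,\lambda_0$, whose phase-plane orbits are circles centred at $(-u\lambda_0/a^2,0)$; singular arcs correspond to the fixed point at the origin, and switches occur exactly where an orbit crosses the $\varphi=0$ axis. A systematic enumeration of the possible crossings, treating the normal ($\lambda_0\neq 0$) and abnormal ($\lambda_0=0$) cases separately, shows that any two consecutive $C$-arcs about which $\varphi$ changes sign must subtend the same signed angle and that more than one interior $S$-segment is impossible, leading directly to the $CSC$/$CCC$ dichotomy and its subsets.

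The main obstacle I anticipate is precisely this enumeration: ruling out configurations such as $SCS$ or $CCCC$ requires both the phase-portrait geometry and, in the abnormal case, an auxiliary variational comparison, since $\ddot\varphi+a^2\varphi=0$ then admits infinitely many zeros of $\varphi$ and unbounded switch sequences can only be excluded by exploiting the length-minimisation property itself---typically via a small geometric deformation of any would-be four-segment extremal that shortens its length while preserving feasibility, in analogy with the role played by Lemma~\ref{CCCC} in the minimax-curvature setting of this paper.
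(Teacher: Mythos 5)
The paper does not prove this statement: it is Dubins' classical theorem, quoted with a citation to \cite{Dubins1957}, and the optimal-control derivation you outline is essentially the one carried out in \cite{Kaya2017}, which the present paper uses as a template for its analysis of Problem~(P) but does not reproduce for~(MD). So the comparison is really between your sketch and those known proofs.

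Your outline has the right architecture (existence by compactness; the maximum principle; the switching function $\varphi=\lambda_\theta$; the phase portrait of $\ddot\varphi+a^2\varphi=-u\lambda_0$, where you have implicitly invoked the free-terminal-time transversality condition $H\equiv 0$ to kill the constant value $h$ of the Hamiltonian --- without it the forcing term is $u(h-\lambda_0)$). But the sketch stops exactly where the content of the theorem begins. The phase portrait does \emph{not} ``lead directly to the $CSC$/$CCC$ dichotomy'': in the bang--bang regime it permits extremals with arbitrarily many circular arcs (the concatenated ellipses may cross the $\dot\varphi$-axis any number of times), and this happens in the normal case just as much as in the abnormal one, so the exclusion of $CCCC$ is not an abnormal-case pathology to be patched at the end --- it is the central difficulty of the whole theorem. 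It requires a genuinely geometric length-shortening deformation of a four-arc extremal (Dubins' original argument, or \cite[Lemma 8]{Kaya2019}), which is analogous to but distinct from Lemma~\ref{CCCC} of this paper: there the deformation preserves length and maximum curvature while breaking the necessary conditions, whereas for (MD) it must strictly shorten the curve. Likewise $SCS$ is excluded only after observing from the phase portrait that the middle arc must traverse a full circle, whose removal shortens the path while preserving the boundary data. Since you only ``anticipate'' these steps rather than supply them, the proposal as written is a correct plan whose decisive lemmas are missing.
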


In the following theorem, we establish a relationship between Problems~(MD) and (Pc).  In fact, the proof of the theorem utilizes Theorem~\ref{Dubins} as well as the following simple observation: Problem~(MD) minimizes the length $t_f$ while the bound $a$ on the curvature is fixed, and Problem~(Pc) minimizes $a$ while $t_f$ is kept fixed.

\begin{theorem}[Solutions of (MD) and (Pc)]  \label{theo:MD&P}
Any solution $t_f^*$ to Problem {\em (MD)} with $a = a^* >0$ fixed is a solution $a^*$ to Problem {\em (Pc)} with $t_f = t_f^*$ fixed.
\end{theorem}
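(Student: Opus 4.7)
My plan is to prove the two natural inequalities separately. One direction is immediate: if $(u^*, t_f^*)$ is a Dubins-optimal solution of (MD) with fixed bound $a^*$, then $u^*$ itself is admissible for (Pc) with $t_f = t_f^*$, since it meets the same boundary conditions and satisfies $|u^*(t)|\le a^*$. Hence the optimal value of (Pc) with $t_f=t_f^*$ is at most $a^*$.

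For the reverse inequality I would argue by contradiction. Suppose there is an admissible control $\hat u$ for (Pc) with $t_f=t_f^*$ whose associated bound is $\hat a < a^*$. Then $(\hat u, t_f^*)$ is feasible for (MD) with curvature bound $\hat a$, so $L^*(\hat a)\le t_f^*$, where $L^*(\hat a)$ denotes the Dubins-optimal length with bound $\hat a$. Applying Theorem~\ref{Dubins} to (MD) with bound $\hat a$ produces an optimal control $\tilde u$ of type $CSC$, $CCC$, or a subset thereof, whose $C$-arcs have curvature exactly $\hat a$, and of length $\tilde t_f = L^*(\hat a)\le t_f^*$.

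Two cases arise. If $\tilde t_f < t_f^*$, then since $|\tilde u(t)|\le \hat a < a^*$ the pair $(\tilde u, \tilde t_f)$ is feasible for (MD) with bound $a^*$ and strictly shorter than $t_f^*$, contradicting optimality of $t_f^*$ for that problem. If $\tilde t_f = t_f^*$, then $\tilde u$ attains length $L^*(a^*)$ while satisfying the curvature bound $a^*$, so it is Dubins-optimal for bound $a^*$ as well. Applying Theorem~\ref{Dubins} with this second bound forces $\tilde u$ to take values in $\{-a^*,0,+a^*\}$ a.e.; combined with the earlier $\tilde u\in\{-\hat a,0,+\hat a\}$ a.e.\ and $\hat a<a^*$, the only possibility is $\tilde u\equiv 0$, i.e.\ a single straight segment. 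This in turn forces $v_0=v_f$ to be colinear with $z_f-z_0$ and $t_f^*=\|z_f-z_0\|$, contradicting Assumption~1.

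The main obstacle I anticipate is the equality case $\tilde t_f = t_f^*$: a single application of Dubins' theorem is not enough, because $L^*(\cdot)$ need not be strictly decreasing in full generality, so I must exploit Dubins' theorem simultaneously with two different curvature bounds to conclude that $\tilde u$ can have no genuine circular arc, and then invoke the standing assumption $t_f>\|z_0-z_f\|$ to rule out the degenerate straight-line configuration. The upper-bound direction and the strict-inequality case are routine.
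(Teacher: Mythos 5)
Your proof is correct and follows essentially the same route as the paper's: the easy direction is just feasibility of the Dubins-optimal control for (Pc), and the reverse direction uses Dubins' theorem to force any (Pc)-feasible control of length $t_f^*$ with bound below $a^*$ to take values only in $\{0\}$, leaving the degenerate straight-line configuration. Your detour through the auxiliary Dubins path for the bound $\hat a$, and your explicit appeal to Assumption~1 to dispose of the equality case, are slightly more elaborate than the paper's direct two-case argument (which applies Dubins' theorem to the (Pc)-solution itself), but the key lemma and the contradiction are the same.
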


\begin{proof}
Suppose that, with $a = a^*$ fixed, the solution of Problem~(MD) yields $t_f^*$.  By Theorem~\ref{Dubins}, either $|\dot{\theta}^*(t)| = a^*$ or $\dot{\theta}^*(t) = 0$, a.e.\ $t \in [0,t_f^*]$.  In other words, with the optimal length $t_f^*$, the optimal angular velocity $\dot{\theta}^*$ has to satisfy
\begin{equation}  \label{opt_thetadot}
|\dot{\theta}^*(t)| \notin (0,a^*)\,,\ \ \mbox{a.e.}\ \ t \in [0,t_f^*].  
\end{equation}    
Now suppose that, with $t_f = t_f^*$ fixed, solution of Problem~(Pc) yields $\widehat{a}$.  Obviously, $0 < \widehat{a} \le a^*$.  We can obtain a contradiction for the case when $\widehat{a} < a^*$ as follows. 
Since this solution of Problem~(Pc) satisfies the constraints of Problem~(MD) with $a=a^*$, and has the optimal length $t_f^*$ for Problem~(MD), it must also be a solution of~(MD). Then it must satisfy \eqref{opt_thetadot}.
However, since it is a solution of Problem~(Pc) it also satisfies $0 \le |\dot{\theta}(t)| \le \widehat{a}$ for a.e. $t \in [0,t_f^*]$, and so we cannot have $\widehat a <a^*$.
Therefore, with $t_f = t_f^*$ fixed, the solution of Problem~(Pc) yields $\widehat{a} = a^*$.  So, if the pair $(t_f^*, a^*)$ solves Problem~(MD), then it also solves Problem~(Pc).  
\end{proof}

\begin{remark} \rm  \label{rem:MD&P}
The converse of Theorem~\ref{theo:MD&P} is not true:  Consider the oriented points $(x_0,y_0,\theta_0) = (0,0,0)$ and $(x_f,y_f,\theta_f) = (1,0,0)$.  Fix $t_f = \widehat{t}_f = 2 > \|x_f - x_0\| = 1$.  Then clearly the solution $\widehat{a}$ to Problem~(Pc) is bounded away from 0, as $\widehat{a} = 0$ is the solution where $x_0$ and $x_f$ are joined by a straight line with $t_f = 1 < 2$.  On the other hand, with the curvature bound $a = \widehat{a}$ fixed, Problem~(MD) yields the solution $t_f^* = 1$, and $\theta^*(t) = 0$ for all $t\in[0,t_f^*]$ (the straight line joining $x_0$ and $x_f$).  So one gets $t_f^* < \widehat{t}_f$, furnishing a counterexample for the converse of Theorem~\ref{theo:MD&P}.
Various other counterexamples are suggested by the numerical experiments in Section \ref{ex:SCS} and Section \ref{ex:COC}, the results of which are illustrated in Figure~\ref{fig:SCS_ex} and  Figure~\ref{fig:COC_ex} respectively. 
\proofbox
\end{remark}

\section{Maximum Principle}
\label{sec:max_principle}

In this section, we study the optimality conditions for a re-formulation of Problem~(Pc) in a classical form. Let us redefine the control variable $u$ such that $\alpha(t)\,v(t) := \dot\theta(t)$, where $\alpha(t) := a > 0$ and so $\dot{\alpha}(t) := 0$, for a.e. $t\in[0,t_f]$.  Problem~(Pc), or equivalently Problem~(P), can then be re-written as the following {\em optimal control problem}:
\[
\mbox{(OC)}\left\{\begin{array}{rll}
\ds\min_{v(\cdot)} &\ \ds \int_0^{t_f} \alpha(t)\,dt  & \\[4mm]
\mbox{s.t.} &\ \dot{x}(t) = \cos\theta(t)\,, & x(0) = x_0\,,\ 
              x(t_f) = x_f\,, \\[2mm] 
  &\ \dot{y}(t) = \sin\theta(t)\,, & y(0) = y_0\,,\ 
              y(t_f) = y_f\,,\\[2mm] 
  &\ \dot{\theta}(t) = \alpha(t)\,v(t)\,, & \theta(0) = \theta_0\,,\ 
              \theta(t_f) = \theta_f\,,\ |v(t)|\le 1\,,\mbox{ for a.e. }  t\in[0,t_f]\,,\\[2mm]
  &\ \dot{\alpha}(t) = 0\,, & \mbox{for a.e. }  t\in[0,t_f]\,.
\end{array}\right.
\]
By Theorem~\ref{theo:MD&P}, a solution of Problem~(MD) is also a solution of Problem~(Pc), or equivalently Problem~(OC). However, the fact that the converse is not true (see Remark~\ref{rem:MD&P}) prompts us to address the existence of solutions of Problem~(OC), or equivalently Problem~(Pc) or (P).

\begin{proposition}[Existence of Minimizers] \label{existence}
    Given initial and final points $z_0,z_f$ and directions $v_0,v_f$ as in Problem (P), there exists a minimizer $z\in W^{2,\infty}((0,t_f),\mathbb R^2)$ which is a solution to Problem~(P). 
\end{proposition}
Conceivably, one can prove Proposition~\ref{existence} by using the existence results from the optimal control literature, by showing that a list of properties are satisfied: the controllability of the dynamical system, a growth condition on the dynamics that is affine in the control, and the compactness and convexity of the control set.  For various forms of optimal control existence results, see, for example, \cite{Filippov1962}, \cite[Theorem~23.10]{Clarke2013}, \cite[Chapter~IV, Theorem~3]{MacStr1982}, \cite[Theorem~2.9]{Trelat2024}.  In this paper, we provide a more self-contained proof of Proposition \ref{existence}.
Since the proof is somewhat long and does not contribute directly to our main result, we defer it until Section \ref{subsec:proof}.

Define the {\em Hamiltonian function} for Problem~(OC) as
\begin{equation}  \label{Hamiltonian}
H(x,y,\theta, \alpha, \lambda_0,\lambda_1,\lambda_2,\lambda_3,\lambda_4,v) := \lambda_0\,\alpha
+ \lambda_1\,\cos\theta + \lambda_2\,\sin\theta + \lambda_3\,\alpha\,v + \lambda_4 \cdot 0\,,
\end{equation}
where $\lambda_0 \ge 0$ is a scalar (multiplier) parameter and $\lambda_i: [0,t_f]\to\dR$, $i = 1,2,3,4$, are the {\em adjoint} (or {\em costate}) {\em variables}.  Let
\[
H[t] := H(x(t),y(t),\theta(t),\alpha(t),\lambda_0,\lambda_1(t),\lambda_2(t),\lambda_3(t),\lambda_4(t),v(t))\,. 
\]
The adjoint variables are required to satisfy
\begin{eqnarray}
&& \dot{\lambda}_1(t) := -H_x[t] = 0\,, \label{adjoint1} \\[1mm]
&& \dot{\lambda}_2(t) := -H_y[t] = 0\,, \label{adjoint2} \\[1mm]
&& \dot{\lambda}_3(t) := -H_\theta[t] = \lambda_1(t)\,\sin\theta(t) - \lambda_2(t)\,\cos\theta(t)\,, \label{adjoint3} \\[1mm]
&& \dot{\lambda}_4(t) := -H_\alpha[t] = -\lambda_0 - \lambda_3(t)\,v(t)\,,\ \ \lambda_4(0) = 0\,,\ 
 \lambda_4(t_f) = 0\,, \label{adjoint4} 
\end{eqnarray}
where $H_x = \partial H / \partial x$, etc.  By these definitions, the state and adjoint variables verify a Hamiltonian system in that, in addition to \eqref{adjoint1}--\eqref{adjoint4}, one has $\dot{x}(t) = H_{\lambda_1}[t]$, $\dot{y}(t) = H_{\lambda_2}[t]$, $\dot{\theta}(t) = H_{\lambda_3}[t]$ and $\dot{\alpha}(t) = H_{\lambda_4}[t]$. Since $H$ does not depend on $t$ explicitly,
\begin{equation}  \label{H_const}
H[t] = h\,,  
\end{equation}
where $h$ is some real constant.

The maximum principle \cite[Theorem 1]{PonBolGamMis1962} for Problem~(OC) can simply be stated as follows.  Suppose that $x,y,\theta,\alpha\in W^{1,\infty}(0,t_f;\dR)$ and $v\in L^\infty(0,t_f;\dR)$ solve Problem~(OC).  Then
there exist a number $\lambda_0\ge0$ and functions $\lambda_i\in W^{1,\infty}(0,t_f;\dR)$, $i=1,2,3,4$, such that $\lambda(t ):= (\lambda_0,\lambda_1(t), \lambda_2(t), \lambda_3(t), \lambda_4(t)) \neq \bf0$, for every $t\in[0,t_f]$, and, in addition to the state differential equations and other constraints given in Problem~(OC) and the adjoint
differential equations \eqref{adjoint1}--\eqref{adjoint2}, \eqref{adjoint4} and \eqref{adjoint3a}, the following condition holds:
\begin{equation}  \label{control} 
u(t)\in\argmin_{|w|\le 1} H(x(t),y(t),\theta(t),\alpha(t),\lambda_0,\lambda_1(t),\lambda_2(t),\lambda_3(t),\lambda_4(t),w)\,.
\end{equation}
Using the definition in \eqref{Hamiltonian}, \eqref{control} can be concisely written as
\begin{equation}  \label{control2}
v(t)\in\argmin_{|w|\le 1}\ a\,\lambda_3(t)\,w\,,
\end{equation}
which, since $a>0$, yields the optimal control as
\begin{equation}  \label{control3}
v(t) = \left\{\begin{array}{ll}
\ \ 1\,, & \mbox{if}\ \lambda_3(t) < 0\,, \\[3mm]
-1\,, & \mbox{if}\ \lambda_3(t) > 0\,, \\[3mm]
\mbox{undetermined}\,, & \mbox{if}\ \lambda_3(t) = 0\,.
\end{array}\right.
\end{equation} 

By \eqref{adjoint1},\eqref{adjoint2} we have $\lambda_1(t) = \overline{\lambda}_1$ and $\lambda_2(t) = \overline{\lambda}_2$ for all $t\in[0,t_f]$, where $\overline{\lambda}_1$ and $\overline{\lambda}_2$ are constants.

Consider first the case where $\bar \lambda _1=\bar \lambda _2=0$. Then $\lambda _3$ is constant. If $\lambda _3=0$ then $\lambda _0=0$ because otherwise $\lambda _4$ would be strictly monotonic, contradicting $\lambda _4(0)=\lambda _4(t_f)=0$. Then $\lambda _4$ would also be identically $0$, namely $\lambda _0=\lambda _1=\lambda _2=\lambda _3=\lambda_4=0$, contradicting the conditions for the maximum principle. So $\lambda _3\not=0$ and, by the maximum principle, $v(t)$ is either identically $-1$ or identically $1$, for a.e. $t\in [0,t_f]$. The solution is a single arc, with no switching. 

Having dealt with $\bar \lambda _1=\bar \lambda _2=0$, suppose from now on that the complex number $\bar \lambda _1+{\bf i}\bar \lambda _2$ is nonzero, with modulus $\rho >0$ and argument $\phi \in [0,2\pi )$.

Then \eqref{Hamiltonian} and \eqref{adjoint3} can respectively be re-written as
\begin{equation}  \label{Hamiltonian2}
H[t] = a\,\lambda_0 + \rho\,\cos(\theta(t) - \phi) + a\,\lambda_3(t)\,v(t)\,,
\end{equation}
where we have also used $\alpha(t) = a$, for all $t\in[0,t_f]$, and 
\begin{equation}  \label{adjoint3a}
\dot{\lambda}_3(t) = \rho\,\sin(\theta(t) - \phi)\,.
\end{equation}
Furthermore, \eqref{Hamiltonian2} and \eqref{H_const} give
\begin{equation}  \label{H_const2}
a\,\lambda_3(t)\,v(t) + \rho\,\cos(\theta(t) - \phi) + a\,\lambda_0 =: h\,.
\end{equation}

The control $v(t)$ to be chosen for the case when $\lambda_3(t)=0$ for a.e.\ $t\in[\zeta_1,\zeta_2]\subset[0,t_f]$ is referred to as {\em singular control}, because \eqref{control2} does not yield any further information.  On the other hand, when $\lambda_3(t)\neq 0$ for a.e.\ $t\in[0,t_f]$ the control $v(t)$ is said to be {\em nonsingular}.  It should be noted that, if $\lambda_3(\tau)=0$ only at an isolated point $\tau$, the optimal control at this isolated point can be chosen as $v(\tau) = -1$ or $v(\tau) = 1$, conveniently.  Therefore, if the control $v(t)$ is nonsingular, it will take on either the value $-1$ or $1$, the bounds on the control variable.  In this case, the control $v(t)$ is referred to as {\em bang--bang}.  Since the sign of $\lambda_3(t)$ determines the value of the optimal control $v(t)$, $\lambda_3$ is referred to as the {\em switching function}.

\begin{lemma}[Singularity and Straight Line Segments] \label{singular}
Suppose that the optimal control $v(t)$ for Problem~{\em (OC)} is singular over an interval $[\zeta_1,\zeta_2]\subset[0,t_f]$.  Then $\theta(t)$ is constant for all $t\in[\zeta_1,\zeta_2]$.  Moreover $v(t) = 0$ for all $t\in[\zeta_1,\zeta_2]$.
\end{lemma}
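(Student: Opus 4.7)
The plan is to split into the trivial case $a=0$ and the substantive case $a>0$, and in the latter to combine differentiation of the switching condition with a non-triviality argument.

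First I would dispense with $a=0$: the state equation $\dot{\theta}=a\,v$ immediately gives $\dot{\theta}\equiv 0$ on all of $[0,t_f]$, so $\theta$ is constant on $[\zeta_1,\zeta_2]$ and the second assertion is vacuous.

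Next, for $a>0$, singularity over $[\zeta_1,\zeta_2]$ means $\lambda_3(t)=0$ there, hence also $\dot{\lambda}_3(t)=0$ a.e.\ on this interval. Feeding this into \eqref{adjoint3a} gives $\rho\,\sin(\theta(t)-\phi)=0$ on $[\zeta_1,\zeta_2]$. Provided I can show $\rho\neq 0$, this forces $\theta(t)-\phi\in\pi\dZ$ pointwise, and since $\theta$ is continuous, it must be constant on the interval. Then $av(t)=\dot{\theta}(t)=0$ a.e., and $a>0$ yields $v(t)=0$ a.e.\ on $[\zeta_1,\zeta_2]$, as required.

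The main obstacle is therefore ruling out $\rho=0$, and I would do it by a non-triviality argument on the adjoint vector. If $\rho=0$, then \eqref{adjoint3a} gives $\dot{\lambda}_3\equiv 0$ on all of $[0,t_f]$, so $\lambda_3$ is constant; since it vanishes on $[\zeta_1,\zeta_2]$, one has $\lambda_3\equiv 0$ on $[0,t_f]$. Substituting $\lambda_3\equiv 0$ into \eqref{adjoint4} gives $\dot{\lambda}_4(t)=-\lambda_0$ on $[0,t_f]$, and integrating with the transversality data $\lambda_4(0)=\lambda_4(t_f)=0$ forces $\lambda_0 t_f=0$, so $\lambda_0=0$ (since $t_f>0$) and in turn $\lambda_4\equiv 0$. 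Combined with $\overline{\lambda}_1=\overline{\lambda}_2=0$ (from $\rho=0$) and $\lambda_3\equiv 0$, this means the whole costate vector $(\lambda_0,\lambda_1,\lambda_2,\lambda_3,\lambda_4)$ vanishes identically, contradicting the non-triviality clause of the maximum principle. Hence $\rho\neq 0$, and the earlier sine equation closes the argument.
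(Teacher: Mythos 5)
Your proof is correct and follows the same basic route as the paper's: for $a>0$, singularity gives $\lambda_3\equiv 0$ on the interval, hence $\dot{\lambda}_3=0$ there, hence via \eqref{adjoint3a} $\sin(\theta(t)-\phi)=0$, hence $\theta$ constant by continuity, hence $v=0$. The one genuine difference is that you explicitly rule out $\rho=0$ before dividing it out of $\rho\sin(\theta(t)-\phi)=0$, whereas the paper's proof of this lemma silently assumes $\rho\neq 0$ and only establishes that fact later (Lemma~\ref{lem:rho}), and does so there using the normality $\lambda_0=1$ of Lemma~\ref{normality} --- both of which appear \emph{after} this lemma in the paper's logical order. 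Your nontriviality argument (if $\rho=0$ then $\lambda_3\equiv 0$, then $\dot{\lambda}_4=-\lambda_0$ with $\lambda_4(0)=\lambda_4(t_f)=0$ forces $\lambda_0=0$ and $\lambda_4\equiv 0$, so the entire adjoint vector vanishes) avoids any forward reference and thus patches a small gap in the paper's presentation; the cost is a slightly longer proof of what the paper treats as an immediate consequence of \eqref{adjoint3a}.
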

\begin{proof}
Suppose that the optimal control $v(t)$ is singular, i.e., $a\,\lambda_3(t)=0$, for a.e.\ $t\in[\zeta_1,\zeta_2]\subset[0,t_f]$.  Because $a>0$, $\dot\lambda_3(t)=0$ for a.e.\ $t\in[\zeta_1,\zeta_2]\subset[0,t_f]$. In other words, from \eqref{adjoint3a}, $\sin(\theta(t) - \phi) = 0$, which implies that $\theta(t)$ is constant, i.e., $\dot{\theta}(t) = u(t) = 0$, for all $t\in[\zeta_1,\zeta_2]$.
\end{proof}

Using Lemma~\ref{singular} we can rewrite the optimal control in \eqref{control3} as
\begin{equation} \label{u}
v(t) = -\sgn(\lambda_3(t))\,, \mbox{ a.e. } t\in[0,t_f]\,. 
\end{equation}
When $\lambda_3(t) = 0$ over an interval of time, $\theta(t)$ is constant, so the solution curve in that interval is a straight line.  When $\lambda_3(t) \neq 0$, i.e., $\lambda_3(t) \neq 0$, over an interval of time the solution curve in that interval is a circular arc segment with radius $1/a$.

By Lemma~\ref{singular}, we have established that the only straight-line solution of Problem~(OC) is a singular control solution.

The problems that yield a solution with $\lambda_0 = 0$ are referred to as {\em abnormal}, so are their solutions, in  optimal control theory, for which the conditions obtained from the maximum principle are independent of the objective functional $\int_0^{t_f} \alpha(t)\,dt$ in Problem~(OC) and therefore not sufficiently informative.  The problems that result in $\lambda_0 > 0$, and their solutions, are referred to as {\em normal}.  Lemma~\ref{normality} below states that we can rule out abnormality for Problem~(OC).

\begin{lemma}[Normality of Solutions] \label{normality}
Problem~{\em (OC)} is normal, i.e., one can set $\lambda_0 := 1$.
\end{lemma}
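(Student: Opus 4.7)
The plan is a proof by contradiction: assume $\lambda_0 = 0$ and show this forces either the entire multiplier vector to vanish (violating the non-triviality condition of the maximum principle) or the optimal curve to be a straight line (contradicting Assumption~2 that $a > 0$). The key leverage is that $\alpha$ is a static optimisation parameter, which gives the two-sided boundary conditions $\lambda_4(0) = \lambda_4(t_f) = 0$ on the corresponding adjoint \eqref{adjoint4}.

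First I would integrate \eqref{adjoint4} over $[0,t_f]$. Setting $\lambda_0 = 0$ and applying the boundary conditions gives
\[
\int_0^{t_f} \lambda_3(t)\,v(t)\,dt = 0.
\]
By Remark~\ref{u_sgn}, on any bang--bang piece $v = -\sgn(\lambda_3)$, so $\lambda_3\,v = -|\lambda_3|$; on singular pieces $\lambda_3 \equiv 0$, so the product still vanishes regardless of the undetermined value of $v$. Hence $\int_0^{t_f} |\lambda_3(t)|\,dt = 0$, forcing $\lambda_3 \equiv 0$ on $[0,t_f]$. Substituting this into \eqref{adjoint3a} gives $\rho\,\sin(\theta(t) - \phi) = 0$ for a.e.\ $t$, where $\rho$ is the constant from \eqref{rhophi}.

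I would then split into two sub-cases. If $\rho > 0$, continuity of $\theta$ forces $\theta(t) - \phi$ to be a constant integer multiple of $\pi$, hence $\dot\theta \equiv 0$, and (using $a > 0$) $v \equiv 0$; the optimal curve is then a straight line, which by Remark~\ref{rem:observation} corresponds to $a = 0$ and contradicts Assumption~2. If instead $\rho = 0$, then $\overline{\lambda}_1 = \overline{\lambda}_2 = 0$, and substituting $\lambda_0 = 0$ together with $\lambda_3 \equiv 0$ into \eqref{adjoint4} yields $\dot\lambda_4 \equiv 0$; combined with $\lambda_4(0) = 0$, this gives $\lambda_4 \equiv 0$, so all components of the multiplier vector vanish and non-triviality fails.

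The delicate point is the treatment of singular arcs in the integration step, where the optimal control is a priori undetermined by the maximum principle; fortunately the integrand $\lambda_3\,v$ vanishes automatically on such arcs, so no refinement of the singular control is needed. Beyond that, the argument is a transparent exploitation of the adjoint system, the two-sided boundary conditions on $\lambda_4$, and the characterisation of the optimal control obtained in Section~\ref{sec:max_principle}.
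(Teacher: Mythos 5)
Your proof is correct and follows essentially the same route as the paper's: assume $\lambda_0=0$, exploit the two-point boundary conditions $\lambda_4(0)=\lambda_4(t_f)=0$ together with the identity $\lambda_3(t)\,v(t)=-|\lambda_3(t)|$ to force $\lambda_3\equiv 0$, and derive a contradiction with $a>0$. The only cosmetic differences are that the paper argues via the monotonicity $\dot\lambda_4=|\lambda_3|\ge 0$ rather than by integrating, and it collapses your two sub-cases into a single appeal to Lemma~\ref{singular} (total singularity $\Rightarrow v\equiv 0\Rightarrow a=0$); your explicit handling of the $\rho=0$ sub-case via the non-triviality condition is, if anything, slightly more careful.
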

\begin{proof}
For contradiction purposes, suppose that $\lambda_0 = 0$ and recall that $a>0$.  Then, from~\eqref{adjoint4} and \eqref{u}, $\dot{\lambda}_4(t) = |\lambda_3(t)| \ge 0$, for every $t\in[0,t_f]$, which, along with the boundary conditions $\lambda_4(0) = \lambda_4(t_f) = 0$, implies that $\lambda_4(t) = \lambda_3(t) = 0$ for every $t\in[0,t_f]$, giving rise to a totally singular solution.  This in turn implies that $v(t) = 0$ and so $\dot\theta(t) = 0$, for every $t\in[0,t_f]$, and thus $a=0$, which is a contradiction.  Therefore $\lambda_0 > 0$.  Then, without loss of generality, one can set $\lambda_0 = 1$.
\end{proof}

\begin{remark}  \label{lambda0} \rm
We recall that as proved in~\cite{Kaya2017} a solution to a similar optimal control reformulation of the Markov--Dubins problem~(MD) can be abnormal, while we prove in Lemma~\ref{normality} that there are no abnormal solutions to Problem~(OC).
\endproof
\end{remark}

\begin{lemma}[Singularity and \boldmath{$\rho$}] \label{lem:rho} 
Suppose that the optimal control $v(t)$ for Problem~{\em (OC)} is singular over an interval $[\zeta_1,\zeta_2]\subset[0,t_f]$.  Then $\rho = |a - h| > 0$, with $h$ as defined in~\eqref{H_const2}.
\end{lemma}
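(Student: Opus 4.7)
The plan is to combine the singularity condition with the constant Hamiltonian identity \eqref{H_const2}, the costate equation \eqref{adjoint3a}, and the boundary conditions on $\lambda_4$, ruling out $\rho=0$ by contradiction.

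First I would invoke Lemma~\ref{singular}: on the singular interval $[\zeta_1,\zeta_2]$, we have $\theta(t)\equiv\overline\theta$ (constant) and $v(t)=0$. Since $a>0$, singularity $a\lambda_3(t)=0$ forces $\lambda_3(t)\equiv 0$ on the interval, hence $\dot\lambda_3(t)=0$ there. Substituting into \eqref{adjoint3a} gives
\begin{equation*}
\rho\,\sin(\overline\theta-\phi)=0.
\end{equation*}
Also, plugging $v=0$, $\lambda_0=1$ (Lemma~\ref{normality}), and $\theta=\overline\theta$ into \eqref{H_const2} yields
\begin{equation*}
\rho\,\cos(\overline\theta-\phi)+a=h,
\end{equation*}
so $\rho\,\cos(\overline\theta-\phi)=h-a$.

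The main step is then to show $\rho\neq 0$. Suppose, for contradiction, that $\rho=0$; then \eqref{rhophi} gives $\overline\lambda_1=\overline\lambda_2=0$, and \eqref{adjoint3a} forces $\dot\lambda_3(t)\equiv 0$ on the \emph{entire} interval $[0,t_f]$. So $\lambda_3$ is constant on $[0,t_f]$, and since it vanishes on $[\zeta_1,\zeta_2]$, we get $\lambda_3\equiv 0$. Equation \eqref{adjoint4} together with $\lambda_0=1$ then reduces to $\dot\lambda_4(t)=-1$, and integrating between the boundary conditions $\lambda_4(0)=\lambda_4(t_f)=0$ yields $0=-t_f$, contradicting $t_f>0$. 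Hence $\rho>0$.

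With $\rho\neq 0$, the equation $\rho\sin(\overline\theta-\phi)=0$ implies $\sin(\overline\theta-\phi)=0$, and so $\cos(\overline\theta-\phi)=\pm 1$. Substituting back into $\rho\cos(\overline\theta-\phi)=h-a$ gives $\pm\rho=h-a$, i.e.\ $\rho=|a-h|$, which, together with $\rho\neq 0$, is the claimed equality. The only delicate point is the contradiction step establishing $\rho\neq 0$: it relies crucially on the two-point boundary condition for $\lambda_4$ coming from the free parameter $\alpha$, and on normality $\lambda_0=1$; I expect this to be the heart of the argument, while the rest is direct substitution.
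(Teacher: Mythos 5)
Your proof is correct and follows essentially the same route as the paper's: substitute the singularity conditions into \eqref{adjoint3a} and \eqref{H_const2} to get $\rho\cos(\theta-\phi)=h-a$ with $\cos(\theta-\phi)=\pm1$, and rule out $\rho=0$ via $\dot\lambda_4=-1$ clashing with $\lambda_4(0)=\lambda_4(t_f)=0$. If anything, your ordering is slightly cleaner, since you establish $\rho\neq0$ before concluding $\sin(\theta-\phi)=0$ from $\rho\sin(\theta-\phi)=0$, a step the paper takes for granted.
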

\begin{proof}
Suppose the optimal control $u(t)$ is singular over $[\zeta_1,\zeta_2]\subset[0,t_f]$.  Then $\lambda_3(t) = \dot{\lambda}_3(t) = 0$ for a.e.\ $t\in[\zeta_1,\zeta_2]$. From \eqref{adjoint3a}, $\sin(\theta(t) - \phi) = 0$, which implies that $\cos(\theta(t) - \phi) = 1$ or $-1$.  Then, substituting $\lambda_3(t)=0$, $\cos(\theta(t) - \phi) = \pm 1$, and (from Lemma~\ref{normality}) $\lambda_0 = 1$, into \eqref{H_const2}, one gets $\rho = \mp(a - h)$, which is positive by the definition of $\rho$ given just before~\eqref{Hamiltonian2}.
\end{proof}

In the rest of the paper, we will not show at times the dependence of variables on $t$ for clarity of presentation.

\begin{proposition}[Phase Portrait for Switchings]  \label{prop:portrait}
The adjoint variable $\lambda_3$ for Problem~{\em (OC)} solves the differential equation
\begin{equation} \label{lambda3_DE} 
\dot{\lambda}_3^2(t) + \left(a\,|\lambda_3(t)| - a + h\right)^2 = \rho^2\,,
\end{equation}
the {\em phase portrait} of which, in the {\em phase variables} $\lambda_3$ and $\dot{\lambda}_3$, can be given as pieces, or concatenations of pieces, of concentric ellipses centred at $(-|a - h|/a, 0)$ for $v(t) = 1$ and $(|a - h|/a, 0)$ for $v(t) = -1$, as shown in Figure~\ref{phase}.  One has the following case-by-case conclusions.
\begin{itemize}
\item[(a)] $\rho > |a - h|$:  In this case, the trajectories are concatenations of (pieces of) ellipses, examples of which are shown by (dark blue) solid curves in Figure~\ref{phase}.  
\begin{itemize}
\item[(i)] The (solid) ellipses in the phase portrait are concatenated at the switching points given by $(0,\sqrt{\rho^2 - (a-h)^2})$ and $(0,-\sqrt{\rho^2 - (a-h)^2})$, where the value of the bang--bang control $v(t)$ switches from $1$ to $-1$ or from $-1$ to $1$, respectively.
\item[(ii)] The concatenated ellipses of the phase portrait cross the $\lambda_3$-axis at $(-(\rho + |a-h|)/a, 0)$ and $((\rho + |a-h|)/a, 0)$.
\item[(iii)] If the bang--bang control has two switchings, the second arc must have a length $\xi$ strictly greater than $\pi/a$, in other words, the smallest possible curvature $a$ is $\pi/\xi$.
\end{itemize}
\item[(b)] $\rho = |a - h|$:  The trajectory contains a singular arc, represented by the origin $(0,0)$ of the phase plane with $v(t) = 0$. In this case, any {\em partially singular}, i.e., a {\em bang--singular}, trajectory is contained by the two unique (red) dashed elliptic curves in Figure~\ref{phase}.  The expressions for the $\lambda_3$ intercepts (when $\lambda_3 \neq 0$) are the same as in part~(a)(ii) above.
\item[(c)] $0 < \rho < |a - h|$: This case is represented by (isolated) elliptic trajectories, examples of which are shown with (black) dotted curves in Figure~\ref{phase}.  Trajectories cross the $\lambda_3$-axis at $(-(|a-h| + \rho)/a, 0)$ and $(-(|a-h| - \rho)/a, 0)$ when $\lambda_3 < 0$, and $((|a-h| + \rho)/a, 0)$ and $((|a-h| - \rho)/a, 0)$ when $\lambda_3 >0$.  However, the trajectories no longer intercept the $\dot{\lambda}_3$-axis; therefore, they represent bang--bang control with no switchings, i.e., either $v(t) = 1$ for all $t\in[0,t_f]$ or $v(t) = -1$ for all $t\in[0,t_f]$.
\end{itemize}
\end{proposition}
\begin{figure}
\vspace*{-10mm}
\begin{center}
\psfrag{L}{$\lambda_3$}
\psfrag{Ld}{$\dot{\lambda}_3$}
\psfrag{b}{\footnotesize$-\ds\frac{|a - h|}{a}$}
\psfrag{c}{\footnotesize\hspace*{-0mm} $\ds\frac{|a - h|}{a}$}
\psfrag{u1}{\footnotesize $v(t) = 1$}
\psfrag{u2}{\footnotesize $v(t) = -1$}
\psfrag{rho1}{\footnotesize $0\le\rho < |a-h|$}
\psfrag{rho2}{\footnotesize $\rho = |a-h| \neq 0$}
\psfrag{rho3}{\footnotesize $\rho > |a-h|$}
\[\includegraphics[width=140mm]{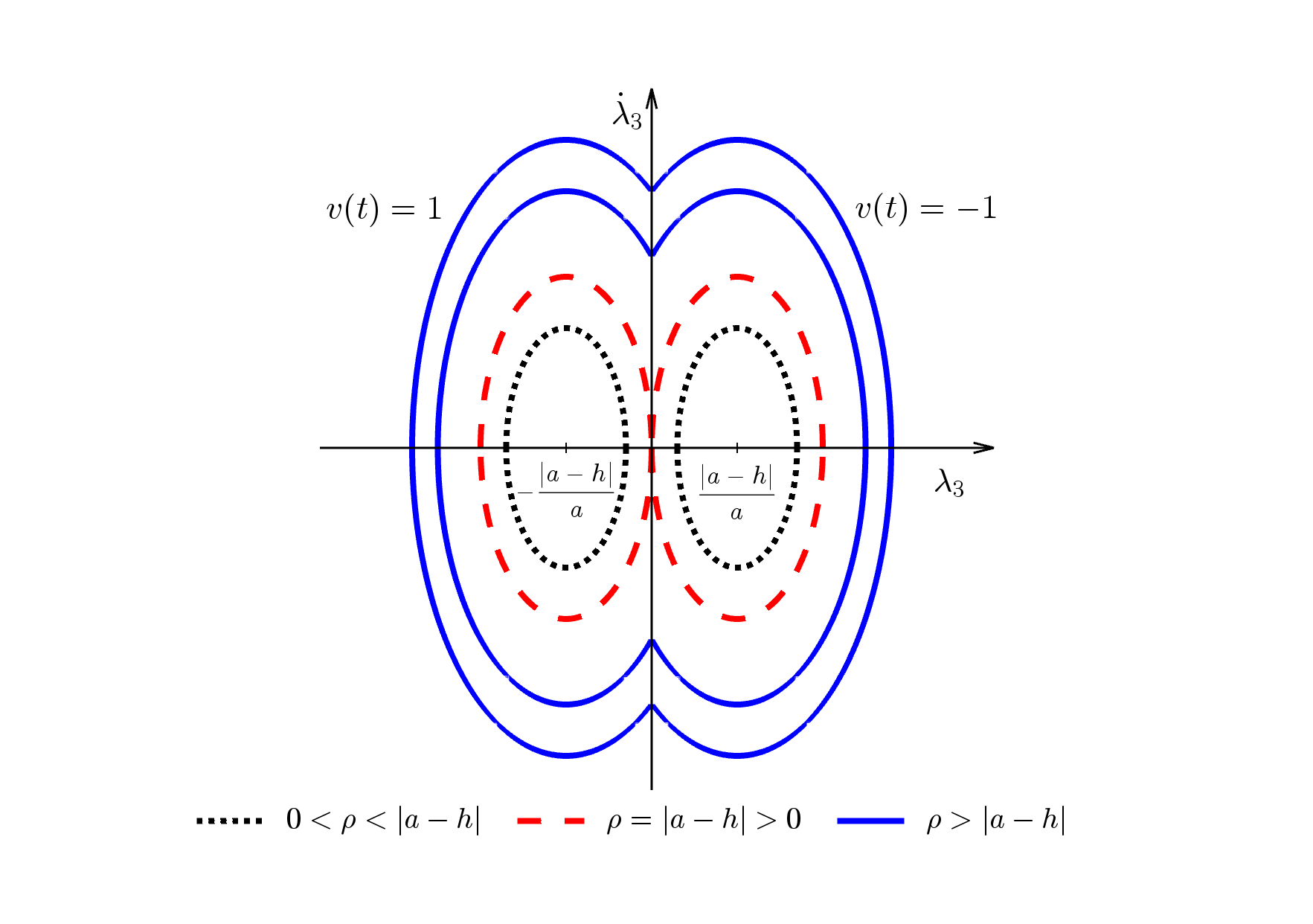}\]
\end{center}
\vspace*{-10mm}
\caption{\small\sf Phase portrait of \eqref{lambda3_DE} for the switching function $\lambda_3$.}
\label{phase}
\end{figure}
\begin{proof}
  From \eqref{adjoint3a},
\begin{equation} \label{eqnA} 
\dot{\lambda}_3^2(t) = \rho^2\,\sin^2(\theta(t) - \phi) = \rho^2 -
\rho^2\,\cos^2(\theta(t) - \phi)\,.
\end{equation}
Using $v(t) = -\sgn(\lambda_3(t))$ and $\lambda_0 = 1$ in \eqref{H_const2}, one gets
\[
\rho\,\cos(\theta(t) - \phi) = a\,|\lambda_3(t)| - a + h\,.
\]
Substituting this into the right-hand side of \eqref{eqnA} and rearranging give \eqref{lambda3_DE}.  Note that the differential equation \eqref{lambda3_DE} is given in terms of the {\em phase variables} $\lambda_3$ and $\dot{\lambda}_3$, and can be put into the form
\begin{equation} \label{lambda3_DE_ellipse} 
\left(\lambda_3 + \frac{|a - h|}{a}\,v\right)^2 + \frac{\dot\lambda_3^2}{a^2} = \frac{\rho^2}{a^2}\,,
\end{equation}
where we have used $v = -\sgn(\lambda_3)$ from~\eqref{u}.  Equation~\eqref{lambda3_DE_ellipse} clearly tells us that the trajectories in the {\em phase plane} for $\lambda_3$ (the $\lambda_3$$\dot{\lambda}_3$-plane) will be pieces or concatenations of pieces of concentric ellipses centred at $(-|a - h|/a, 0)$ for $v(t) = 1$ and $(|a - h|/a, 0)$ for $v(t) = -1$, as shown in Figure~\ref{phase}.  

In parts~(a)(i) and (a)(ii), the $\dot{\lambda}_3$ and $\lambda_3$ intercepts can be found after substituting $\lambda_3 = 0$ and $\dot{\lambda}_3 = 0$ into~\eqref{lambda3_DE_ellipse}, respectively, by also setting $v(t) = 1$ (when $\lambda_3(t) < 0$) and $v(t) = -1$ (when $\lambda_3(t) > 0$) in each case.  Part~(a)(iii) is straightforward to deduce after inspection of Figure~\ref{phase}, with the $\lambda_3$ intercepts found in the same way as in part (a)(ii).

Part~(b) is furnished by Lemma~\ref{lem:rho}.

In part~(c), substitutions similar to those made in parts~(a)(i) and (a)(ii) result in the expressions for the $\lambda_3$ intercepts, when $v(t) = 1$ and when $v(t) = -1$.
\end{proof}

\begin{remark}[Concatenations] \label{rem:concatenations} \rm 
The diagram in Figure~\ref{phase} tells us that an optimal path will generally be a concatenation of straight lines (i.e., singular arcs, where $v(t) = 0$) and circular arcs (i.e., nonsingular arcs, where $v(t) = 1$ or $-1$).  However, the diagram does not tell us how many switchings an optimal control must have.
\endproof
\end{remark}

\begin{lemma} \label{nonsingular}
Suppose that optimal control $v(t)$ for Problem~{\em (OC)} is nonsingular over an interval $[\zeta_3,\zeta_4]\subset[0,t_f]$.  Then 
\begin{equation} \label{lambda3} 
|\lambda_3(t)| = \frac{1}{a}\left[\rho\,\cos(\theta(t) - \phi) + a - h\right]\,,\ \ \mbox{ for a.e.\ } t\in[\zeta_3,\zeta_4]\subset[0,t_f]\,. 
\end{equation}
\end{lemma}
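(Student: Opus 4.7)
The plan is to derive \eqref{lambda3} directly from the Hamiltonian constancy relation \eqref{H_const2} by substituting the explicit bang--bang form of the optimal control from Remark~\ref{u_sgn}. No delicate argument is needed; the statement is essentially algebraic once the prior lemmas are in place.

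First I would recall that by Lemma~\ref{normality} we may take $\lambda_0 = 1$, so \eqref{H_const2} reduces to
\[
a\,\lambda_3(t)\,v(t) + \rho\,\cos(\theta(t) - \phi) + a = h, \qquad t\in[0,t_f].
\]
Since $a>0$ (Assumption 3) and the control is nonsingular on $[\zeta_3,\zeta_4]$, we have $a\,\lambda_3(t) \neq 0$ for a.e.\ $t$ in that subinterval, so by \eqref{u} the control takes the form $v(t) = -\sgn(\lambda_3(t))$ a.e. Consequently
\[
\lambda_3(t)\,v(t) = -\lambda_3(t)\,\sgn(\lambda_3(t)) = -|\lambda_3(t)|
\]
wherever $\lambda_3(t)\neq 0$, i.e., almost everywhere on $[\zeta_3,\zeta_4]$.

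Substituting this into the Hamiltonian constancy relation yields
\[
-a\,|\lambda_3(t)| + \rho\,\cos(\theta(t) - \phi) + a = h,
\]
and solving for $|\lambda_3(t)|$ gives exactly \eqref{lambda3}. There is no real obstacle: the only subtlety is keeping track that $v(t) = -\sgn(\lambda_3(t))$ is valid only a.e.\ (failing at isolated zeros of $\lambda_3$), which is acceptable since the claimed identity is stated for a.e.\ $t$. Note also that the right-hand side of \eqref{lambda3} is automatically nonnegative, as it must be for the modulus; this follows because the whole derivation starts from $a\,|\lambda_3(t)|\ge 0$ and is consistent with $\rho\,\cos(\theta(t)-\phi) \ge h-a$ on the nonsingular portion.
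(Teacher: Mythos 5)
Your proposal is correct and follows exactly the paper's own (one-line) argument: substitute $v(t) = -\sgn(\lambda_3(t))$ and $\lambda_0 = 1$ into \eqref{H_const2} and rearrange. The extra care you take about the a.e.\ qualification and the nonnegativity of the right-hand side is consistent with, and slightly more explicit than, the paper's proof.
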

\begin{proof}
Substitution of $v(t) = -\sgn(\lambda_3(t))$ and $\lambda_0 = 1$ into \eqref{H_const2} and re-arranging yield the required expression. 
\end{proof}

Recall that when the switching function $\lambda_3(t)$ vanishes, or equivalently when $H_v[t] = 0$, for $t\in[\zeta_1,\zeta_2]\subset[0,t_f]$, we identified the singular control as $v(t) = 0$ over the same interval (see Lemma~\ref{singular}).  However, owing to $\lambda_3(t)$ vanishing, \eqref{control2} is uninformative as a necessary condition of optimality in this interval.  Instead, a higher-order necessary condition of optimality for the singular control candidate, called the {\em generalized Legendre--Clebsch condition} (GLC), can be checked (see, e.g. \cite{BelJac1975, SchLed2012}):
\begin{equation}  \label{GLC}
(-1)^k \frac{\partial}{\partial v}\, \frac{d^{2k}}{dt^{2k}} H_v[t] \ge 0\,,
\end{equation}
for all $t\in[\zeta_1,\zeta_2]$, where $2k$ is the number of times $H_v$ is differentiated until the control $v$ appears for the first time, $k$ being referred to as the {\em order of the singular control} or the {\em order of singularity}.  The inequality in~\eqref{GLC} is also known as the Kelley condition, dating back to the 1960s when it was first conceived.  The GLC can also be expressed neatly in terms of Lie brackets~\cite{SchLed2012} or Poisson (or Jacobi) brackets\cite{Melikyan2006}, which have advantages for calculations and interpretations in a differential geometric setting.

\begin{proposition}[Order of Singularity and GLC]  \label{prop:GLC}
The singular control $v(t) = 0$ is of order one (i.e., $k =1$) and satisfies the generalized Legendre--Clebsch condition.
\end{proposition}
\begin{proof}
Recall that singularity arises when $H_v[t] = a\,\lambda_3(t) = 0$, or $\lambda_3(t) = 0$, over $[\zeta_1,\zeta_2]\subset[0,t_f]$.  The successive derivatives of $H_v[t]$ along a singular arc give $dH_v[t]/dt = a\,\dot{\lambda}_3(t) = a\,\rho\,\sin(\theta(t)-\phi) = 0$ and so $\sin(\theta(t)-\phi) = 0$ as $a$ and $\rho$ are nonzero; $d^2H_v[t]/dt^2 = a\,\rho\,\dot{\theta}(t)\,\cos(\theta(t)-\phi) = a^2\,\rho\,v(t)\,\cos(\theta(t)-\phi) = 0$, which implies $k = 1$, i.e., the order of singularity is one, and that $v(t) = 0$, since $\cos(\theta(t)-\phi) = 1$ or $-1$ as $\sin(\theta(t)-\phi) = 0$.  Therefore, in view of~\eqref{GLC}, we need to show that
\[
\frac{\partial}{\partial v}\, \frac{d^{2}}{dt^{2}} H_v[t] = a^2\,\rho\,\cos(\theta(t)-\phi) \le 0\,,
\]
for all $t\in[\zeta_1,\zeta_2]$.  Since $a^2\,\rho > 0$ and $\cos(\theta(t)-\phi) = 1$ or $-1$, satisfying GLC boils down to showing that $\cos(\theta(t)-\phi) = -1$ along a singular arc.

By Proposition~\ref{prop:portrait}(a)(ii)--(b), on a bang-trajectory (i.e., when $v(t) = 1$ or $-1$) where $\dot{\lambda}_3(t) = 0$ for some $t\in[0,t_f]$, we have $|\lambda_3(t)| = (\rho + |a - h|) / a$.  Then, by Lemma~\ref{nonsingular},
\begin{equation}  \label{eq:lambda3_intercept}
|\lambda_3(t)| = \frac{1}{a}\left[\rho\,\cos(\theta(t) - \phi) + a - h\right] = \frac{1}{a}(\rho + |a - h|)\,.
\end{equation}
Consider a bang--singular--bang trajectory and the phase portrait in Figure~\ref{phase} (in particular, consider the (red) dashed elliptic curves there).  Then, by Lemma~\ref{lem:rho}, $\rho = |a - h|$.  Now, consider a (bang) part of this trajectory such that $\dot{\lambda}_3(t) = 0$ but $|\lambda_3(t)| \neq 0$.  
Then, \eqref{eq:lambda3_intercept} becomes
\[
2\,|a - h| = |a - h|\,\cos(\theta(t) - \phi) + a - h\,,
\]
which implies that $\cos(\theta(t) - \phi) = 1$ and $h < a$.  Next, suppose that $\dot{\lambda}_3(t) = 0$ and $|\lambda_3(t)| = 0$.  Then, from~\eqref{eq:lambda3_intercept},
\[
0 = |a - h|\,\cos(\theta(t) - \phi) + a - h\,,
\]
which in turn implies that $\cos(\theta(t) - \phi) = -1$ since $a - h > 0$, completing the proof.
\end{proof}

\begin{remark}[Singularity and Chattering Arcs]   \label{rem:chatter1}  \rm
By the proof of Proposition~\ref{prop:GLC} above, we note that not only the order of singularity is one, but GLC is also strictly satisfied, i.e., $k=1$ and in~\eqref{GLC} strict inequality holds.  Moreover, the singular control $v(t) = 0\in(-1,1)$.  Then, by~\cite[Prop.~2.8.4]{SchLed2012}, the bang--singular and singular--bang concatenations of the arcs are allowed; in other words, switching from a bang arc to a singular arc or from a singular arc to a bang arc is allowed to occur, in line with the maximum principle.  Consequently, chattering arcs, i.e., (optimal) arcs with infinitely many switchings, cannot occur.
\endproof
\end{remark}

\begin{remark}[Chattering Arcs in a Related Problem]   \label{rem:chatter2}  \rm
The {\em minimax spirality problem}, introduced and studied in~\cite{KayNoaSch2025} by the authors of the current paper, is related to Problem~(P) in that it is concerned with minimizing the $L^\infty$-norm of the arc-length derivative of the curvature instead of the $L^\infty$-norm of the curvature.  The minimax spirality problem can be cast as an optimal control problem (without the path constraints on the curvature that are imposed in~\cite{KayNoaSch2025}) as follows.
\[
\mbox{(MS)}\left\{\begin{array}{rll}
\ds\min_{v(\cdot)} &\ \ds \int_0^{t_f} \beta(t)\,dt  & \\[4mm]
\mbox{s.t.} &\ \dot{x}(t) = \cos\theta(t)\,, & x(0) = x_0\,,\ 
              x(t_f) = x_f\,, \\[2mm] 
  &\ \dot{y}(t) = \sin\theta(t)\,, & y(0) = y_0\,,\ 
              y(t_f) = y_f\,,\\[2mm] 
  &\ \dot{\theta}(t) = \kappa(t)\,, & \theta(0) = \theta_0\,,\ 
              \theta(t_f) = \theta_f\,,\\[2mm]
  &\ \dot{\kappa}(t) = \beta(t)\,v(t)\,, & \kappa(0) = \kappa_0\,,\ \kappa(t_f) = \kappa_f\,, \\[2mm]
  &\ \dot{\beta}(t) = 0\,, & |v(t)|\le 1\,, \ \mbox{for a.e. }  t\in[0,t_f]\,,
\end{array}\right.
\]
where $\kappa(t)$ is the curvature and $\beta(t)$, which stands for an unknown (constant) bound $b > 0$ on $|\dot{\kappa}(t)|$ for a.e. $t\in[0,t_f]$, is to be minimized.  Furthermore, $v(t)$ is the control variable.  In fact, it can be shown that the (candidate) singular control here is $v(t) = 0$~\cite[Lemma~3.1]{KayNoaSch2025}.  The result in~\cite[Lemma~3.5]{KayNoaSch2025} states that if there is a singular arc but no chattering in the solution curve of Problem~(MS), then the singularity is total, i.e., the singular arc is the whole curve.  Here, we provide an observation which was not explicitly made in~\cite{KayNoaSch2025}:  By the contrapositive of~\cite[Lemma~3.5]{KayNoaSch2025}, if the singularity is partial, i.e., the singular arc is to be concatenated with a bang-arc (along which $v(t) = 1$ or $-1$), then there is a chattering arc in the solution curve of Problem~(MS).  Subsequently, the numerical experiments in~\cite[Fig.~6]{KayNoaSch2025} also provide some evidence for the presence of chattering arcs.  Problem~(MS) furnishes an example that chattering arcs are not so uncommon with variational curves.

For an alternative approach, we note that the (candidate) singular control $v(t) = 0$ lies in the interior of the interval $[-1,1]$ and the order of the singularity for Problem~(MS) can easily be checked to be two ($k = 2$).  These are two of the three hypotheses listed in~\cite[Prop.~2.8.5]{SchLed2012} for chattering arcs to eventuate.  The third hypothesis in~\cite[Prop.~2.8.5]{SchLed2012} is that GLC, i.e., \eqref{GLC}, with $k=2$ and strict inequality, is satisfied; however, this assumption does not seem straightforward to check for Problem~(MS).
\endproof
\end{remark}

\section{Classification of Optimal Paths}
\label{sec:classification}

A trajectory satisfying the necessary conditions can be described as being of type $C, S, CSC,\ldots$ etc. In \cite[Lemma~6]{Kaya2017} the analog of Figure \ref{phase} is used to conclude that an optimal path for (MD) contains a straight line segment $S$ then it must be of type $CSC, CS, SC,$ or $S$. Part of the argument is that a trajectory containing e.g.\ 
$SCS$ must traverse a full circle and is therefore not optimal because the objective in the Markov--Dubins problem is to minimize length. 

For Problem~(OC) we can also conclude from Figure \ref{phase} and \eqref{adjoint3a} that $SCS$ must traverse a full circle, but it is no longer obvious that such a path cannot be optimal. In fact we will show that in some cases it \textit{is} optimal ( Proposition \ref{SCS}, Proposition \ref{COC}) while in others it is not (Corollary \ref{OSC}).  It will be helpful to introduce the notation $O$ to represent a full circle once covered, and reserve $C$ for circular arcs which are not closed.

\begin{lemma} \label{subtrajectory}
    If $\mathbf{x}^*(t):=(x^*(t),y^*(t),\theta^*(t))$ with $\alpha^*(t)=a^*$ is an optimal path for Problem (OC) then any sub-path of $\mathbf{x}^*$ is also optimal for the inherited constraints. That is, for any  $[t_1,t_2]\subset [0,t_f]$  the restriction $\mathbf{x}^*| [t_1,t_2]$ is optimal for problem (OC) with the constraints replaced by $x(t_i)=x^*(t_i)$, $y(t_i)=y^*(t_i)$, $\theta(t_i)=\theta^*(t_i)$ for $ i=1,2$, with  $ t\in [t_1,t_2]$.
\end{lemma}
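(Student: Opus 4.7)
The plan is to adapt Bellman's principle of optimality to Problem~(OC) via a direct splicing argument by contradiction, combining the inherited boundary conditions with a cost-comparison on $[0,t_f]$.

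I would start by assuming the contrary: that the restriction $\mathbf{x}^*|_{[t_1,t_2]}$ together with $\alpha^*\equiv a^*$ and control $v^*|_{[t_1,t_2]}$ is not optimal for Problem~(OC) on $[t_1,t_2]$ with the inherited boundary conditions. Then there exists an admissible tuple $(\tilde{\mathbf{x}},\tilde\alpha,\tilde v)$ on $[t_1,t_2]$ with $\tilde\alpha\equiv\tilde a$ constant, $|\tilde v|\le 1$ a.e., satisfying the state equations and the inherited endpoint constraints $\tilde x(t_i)=x^*(t_i)$, $\tilde y(t_i)=y^*(t_i)$, $\tilde\theta(t_i)=\theta^*(t_i)$ for $i=1,2$, and with strictly smaller cost $\tilde a(t_2-t_1)<a^*(t_2-t_1)$, that is $\tilde a<a^*$.

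Next I would splice: define $\bar{\mathbf{x}}$ on $[0,t_f]$ by $\bar{\mathbf{x}}=\mathbf{x}^*$ on $[0,t_1]\cup[t_2,t_f]$ and $\bar{\mathbf{x}}=\tilde{\mathbf{x}}$ on $[t_1,t_2]$. The inherited boundary conditions guarantee continuity of $\bar{\mathbf{x}}$ at $t_1$ and $t_2$, so $\bar{\mathbf{x}}\in W^{1,\infty}(0,t_f;\mathbb{R}^3)$ and it meets the global boundary data at $t=0,t_f$ of Problem~(OC). Moreover, $|\dot{\bar\theta}|\le a^*$ on $[0,t_1]\cup[t_2,t_f]$ and $|\dot{\bar\theta}|\le\tilde a<a^*$ on $[t_1,t_2]$. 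Setting $\bar\alpha:=\mathrm{ess\,sup}_{[0,t_f]}|\dot{\bar\theta}|$ and $\bar v:=\dot{\bar\theta}/\bar\alpha$ yields an admissible tuple for~(OC) with $|\bar v|\le 1$ a.e.

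The final step is to show a strict decrease in the objective. At the global optimum $\mathbf{x}^*$ one must have $a^*=\mathrm{ess\,sup}_{[0,t_f]}|\dot\theta^*|$: otherwise $\alpha^*$ could be replaced by this smaller essential supremum while keeping the same trajectory feasible, strictly lowering the cost $\int\alpha\,dt$, contradicting the optimality of $\mathbf{x}^*$. Since the lemma is asserted uniformly for every sub-interval $[t_1,t_2]\subset[0,t_f]$, one may assume $[t_1,t_2]$ contains a point at which this essential supremum is attained. Under that choice, the splicing gives $\mathrm{ess\,sup}_{[0,t_f]}|\dot{\bar\theta}|<a^*$, hence $\bar\alpha<a^*$ and $\int_0^{t_f}\bar\alpha\,dt=\bar\alpha\, t_f<a^*\,t_f=\int_0^{t_f}\alpha^*\,dt$, contradicting the optimality of $\mathbf{x}^*$ for Problem~(OC). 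The \emph{main obstacle} is this localization step: the spliced improvement on $[t_1,t_2]$ must translate into a strict decrease of the \emph{global} essential supremum of $|\dot\theta|$. The ``for every sub-interval'' quantifier in the lemma, combined with the fact that at a global optimum $a^*$ is actually attained as an essential supremum, is precisely what licenses this reduction and makes the Bellman-style contradiction go through.
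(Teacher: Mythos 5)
Your splicing set-up matches the paper's, but the final ``localization'' step contains a genuine gap, and it is exactly the step the paper handles differently. After splicing, the global quantity $\mathrm{ess\,sup}_{[0,t_f]}|\dot{\bar\theta}|$ drops below $a^*$ only if $|\dot\theta^*|<a^*$ essentially everywhere on the \emph{complement} $[0,t_1]\cup[t_2,t_f]$. Knowing that the supremum is attained somewhere \emph{inside} $[t_1,t_2]$ does not prevent it from also being attained outside: for an optimal path of type $CSC$ with $[t_1,t_2]$ straddling the middle segment, both leftover pieces are arcs of curvature $a^*$, so the spliced path still has maximum curvature $a^*$ and the cost $\int_0^{t_f}\bar\alpha\,dt=a^*t_f$ is unchanged --- no contradiction. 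Moreover, the move ``since the lemma is asserted for every sub-interval, one may assume $[t_1,t_2]$ contains a point where the supremum is attained'' is not a valid reduction: the conclusion is universally quantified over $[t_1,t_2]$, so you must treat an arbitrary given sub-interval, and you offer no argument deducing the general case from the special one. (Incidentally, because $\mathbf{x}^*$ satisfies the maximum principle, any non-straight sub-path automatically attains $|\dot\theta^*|=a^*$ on a set of positive measure, so the assumption you single out is not even restrictive --- but it also buys you nothing, for the reason above.)

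The missing idea is how to extract a contradiction in the case where the spliced path $\bar{\mathbf{x}}$ has the \emph{same} maximum curvature $a^*$. The paper's proof does this not by a cost comparison but by the necessary conditions: if $a^*$ were the minimum value, then $\bar{\mathbf{x}}$, being feasible with cost $a^*t_f$, would itself be a minimizer and hence would have to satisfy the maximum principle; but $\bar{\mathbf{x}}$ contains an arc on which $0<|\dot{\bar\theta}|=\tilde a<a^*$, i.e.\ $\bar v\in(0,1)$ in modulus, which is neither bang--bang nor the straight-line singular case --- contradicting \eqref{control3} and Lemma~\ref{singular}. This is the step your Bellman-style argument cannot replace, precisely because the $L^\infty$-type objective is not additive over sub-intervals: a strict improvement on $[t_1,t_2]$ need not strictly improve the global maximum. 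To repair your proof, keep your splicing construction and add this second branch (same maximum curvature $\Rightarrow$ violation of the necessary optimality conditions $\Rightarrow$ $a^*$ not minimal), and dispose of the case where the sub-path is a straight line separately, as the paper does.
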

\begin{proof}
    If the sub-path is a line then it is optimal. Assume it is not a line, and suppose it is not optimal. Then there exists a path satisfying the inherited constraints and having $0<a<a^*$, meaning it contains circular arcs with lower curvature than those in $\mathbf{x}^*$. We can insert it between $\mathbf{x}^*([0,t_1])$ and $\mathbf{x}^*([t_2,t_f])$ to obtain a new path $\tilde{\mathbf{x}}(t)$. Now either $\mathbf{x}^*$ and $\tilde{\mathbf{x}}$ have the same maximum curvature, or the maximum curvature of $\tilde{\mathbf{x}}$ is smaller. The latter contradicts our assumption that $\mathbf{x}^*$ is optimal. If they have the same maximum curvature, then $\tilde{\mathbf{x}}$ contains at least one circular arc with curvature $a^*$ and at least one circular arc with curvature $a$, and so it does not satisfy the necessary conditions for optimality (specifically \eqref{control3}). It then follows that $a^*$ cannot be a minimum, again contradicting our assumption on $\mathbf x$. 
\end{proof}

\begin{lemma}\label{fullcircle}
    Any path of type $XCY$, where $X,Y$ are $C,O$ or $S$ but not both $C$, is not optimal.
\end{lemma}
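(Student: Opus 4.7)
The plan is to show that a path $XCY$ in which the middle segment is a proper arc $C$ and at least one of $X, Y$ is $O$ or $S$ cannot satisfy the necessary conditions of the maximum principle derived in Section \ref{sec:max_principle}, and is therefore not optimal. The central observation is that by \eqref{adjoint1}--\eqref{adjoint2} the quantities $\overline{\lambda}_1, \overline{\lambda}_2$ are constants, so the parameter $\rho$ defined in \eqref{rhophi} is fixed along the whole trajectory; consequently, the phase-plane trajectory of $\lambda_3$ stays in exactly one of the three regimes depicted in Figure \ref{phase}. I would proceed by case analysis on the value of $\rho$.

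If $\rho < |a - h|$, then by Remark \ref{rem:ellipses}(iii) the elliptic trajectories do not cross the $\dot\lambda_3$-axis, the control $v$ never switches, and the trajectory consists of a single segment, ruling out a three-segment structure $XCY$ outright. If $\rho = |a - h|$, the trajectory lies on the red dashed ellipses of Figure \ref{phase}, which touch the $\dot\lambda_3$-axis only at the origin; every switching therefore occurs at the origin. Since the middle $C$ is an interior segment, both of its endpoints are switching points at the origin. But a trajectory on the red ellipse from the origin back to the origin must traverse the whole ellipse once (because $\theta$ evolves monotonically during a constant-$v$ segment), making the middle segment a full loop $O$ rather than a proper arc, a contradiction.

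The main work lies in the regime $\rho > |a - h|$. Here Lemma \ref{lem:rho} rules out any $S$ segment. To rule out $O$, I would combine the Hamiltonian identity \eqref{H_const2} with \eqref{u} to obtain $|\lambda_3(t)| = (a - h + \rho\cos(\theta(t) - \phi))/a$; non-negativity forces $\cos(\theta - \phi) \ge (h - a)/\rho$, confining $\theta - \phi$ modulo $2\pi$ to intervals of half-width $\psi^* := \arccos((h-a)/\rho) < \pi$. During a constant-$v$ segment, $\theta$ evolves monotonically at rate $\pm a$, and as soon as it reaches the boundary of such an interval the costate $\lambda_3$ vanishes with $\dot\lambda_3 \ne 0$ (since $\sin\psi^* \ne 0$ in this regime), forcing a switch of $v$. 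Therefore, the total change of $\theta$ over any single segment is at most $2\psi^* < 2\pi$, so no segment can complete a full loop. Every segment must then be a proper arc $C$, so $X$ and $Y$ are both $C$, contradicting the hypothesis.

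The main obstacle is the analysis of the third regime, where one has to combine the Hamiltonian constraint, the costate equation, and the sign condition $v = -\mathrm{sgn}(\lambda_3)$ to confine $\theta$ to an interval of width strictly less than $2\pi$. The first two regimes follow essentially directly from Remark \ref{rem:ellipses} and Lemma \ref{lem:rho}, so they admit short treatments.
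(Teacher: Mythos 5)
Your overall strategy --- a case split on $\rho$ versus $|a-h|$ driven by the phase portrait of the switching function --- is the same as the paper's, and your treatment of the regimes $\rho<|a-h|$ (no switchings, hence a single segment) and $\rho>|a-h|$ (no singular arc by Lemma~\ref{lem:rho}, and no full loop because $\theta-\phi$ is confined modulo $2\pi$ to a window of width $2\psi^*<2\pi$) is correct; the latter is a clean analytic rendering of Remark~\ref{rem:ellipses}(i).

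The gap is in the regime $\rho=|a-h|$, at the step ``since the middle $C$ is an interior segment, both of its endpoints are switching points at the origin.'' This is false when the middle $C$ is adjacent to a loop $O$ of the \emph{same} orientation, as in $CCO$ or $OCC$ (and likewise $SCO$, $OCS$): the junction between the middle $C$ and such an $O$ involves no change of control, so it need not occur at the origin of the phase plane. Concretely, in $CCO$ with the middle $C$ and the $O$ turning the same way, the switching function leaves the origin at the single genuine switch and then winds once-plus-a-fraction around the red ellipse; passing through the origin at an isolated instant does not force a switch, so this path \emph{does} satisfy all the necessary conditions of the maximum principle and cannot be excluded by the switching-function analysis alone. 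This is exactly the case the paper singles out and handles with a supplementary argument: reflect the loop about the terminal point to obtain a feasible path with the same length, boundary data and maximum curvature in which the loop has the opposite orientation; that path now requires a switch at the $C$--$O$ junction, hence a full traversal of the ellipse by the middle $C$, which is impossible, so the reflected path violates the necessary conditions and the common maximum curvature cannot be optimal. Without this (or an equivalent) additional argument your proof does not cover all the concatenations asserted in the statement.
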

\begin{proof}
    Since at least one of $X$ and $Y$ is $S$ or $O$, in order for the path to be optimal it must have switching function on the (red) dashed ellipses in Figure~\ref{phase}. But since the middle $C$ in $XCY$ does not traverse a full loop it is impossible for the switching function to traverse a full ellipse and switch to $S$ or to  $C$ or $O$ with the opposite orientation. (Note that we are assuming that, for example, in $CCS$ the two arcs have opposite orientation, otherwise this path would be written as $CS$ or $OCS$). In the case of $CCO$ (similarly $OCC$) this is perhaps not immediately obvious because the middle $C$ and the $O$ may have the same orientation, and then the switching function would not need to return to the origin of the phase portrait in Figure~\ref{phase}.  However, we can reflect the loop about the endpoint with no change in maximum curvature, and the resulting path again is not compatible with the necessary conditions on the switching function. 
\end{proof}

\begin{corollary} \label{OSC}
    Any path of type $SOSC$ or any permutation thereof, or type $COCC$ or any permutation thereof,  cannot be optimal.
\end{corollary}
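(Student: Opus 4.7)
The plan is to verify the corollary by a case analysis of all valid permutations, applying Lemma~\ref{fullcircle} together with Lemma~\ref{subtrajectory}. For each permutation I would look for a contiguous length-three sub-path of the form $XCY$ with $\{X,Y\}\ne\{C,C\}$; if one exists then Lemma~\ref{fullcircle} shows that the sub-path is not optimal, and Lemma~\ref{subtrajectory} extends this to the full four-segment path.

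For the $COCC$ family, the valid concatenations are $COCC$, $CCOC$, $CCCO$, and $OCCC$. Each of these contains either $OCC$ or $CCO$ as a contiguous length-three sub-path (depending on the position of the $O$ among the $C$'s), and both $OCC$ and $CCO$ are of the form $XCY$ with exactly one side equal to $O$. Lemma~\ref{fullcircle} plus Lemma~\ref{subtrajectory} therefore dispose of every permutation in this family.

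For the $SOSC$ family, the valid concatenations (no two $S$'s adjacent, since consecutive straight segments would merge into a single $S$) are $SOSC$, $SOCS$, $SCSO$, $SCOS$, $CSOS$, and $OSCS$. Four of these---$SOCS$, $SCSO$, $SCOS$, and $OSCS$---contain the $XCY$ sub-paths $OCS$, $SCS$, $SCO$, and $SCS$ respectively and so are ruled out in exactly the same way. The only permutations remaining are $SOSC$ and its time-reverse $CSOS$, whose length-three sub-paths $SOS$ and $OSC$ (respectively $CSO$ and $SOS$) both have an $O$ or $S$ in the middle, so Lemma~\ref{fullcircle} is not directly available.

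For these two remaining cases I would exploit the geometric fact that a full loop $O$ returns to the same position with the same tangent, so the loop inside $SOSC$ can be slid freely along the combined collinear straight segment $S_1S_2$ without changing the maximum curvature, length, or endpoint data. Sliding the loop to one extremity converts $SOSC$ into a geometrically equivalent three-segment concatenation of type $SOC$ or $OSC$, to which I would then apply the reflection device used at the end of the proof of Lemma~\ref{fullcircle}: reflecting the loop about its tangent point gives a competitor with the same maximum curvature whose switching function, in the bang-singular regime of Figure~\ref{phase}, violates the control law~\eqref{control3}. The main obstacle is exactly this last step; unlike in Lemma~\ref{fullcircle} there is no immediate $XCY$ sub-path to target, so the sliding reduction is the indispensable bridge that brings $SOSC$ and $CSOS$ into a configuration where the phase-portrait reflection argument already developed can be invoked.
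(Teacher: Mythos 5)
Your case analysis via contiguous $XCY$ sub-paths (Lemma~\ref{fullcircle} combined with Lemma~\ref{subtrajectory}) correctly disposes of the whole $COCC$ family and of four of the six $SOSC$ permutations, and for those cases it is a legitimate, slightly more direct variant of the paper's argument. The gap is in the two remaining cases $SOSC$ and $CSOS$ --- precisely the cases that give the corollary its name. You allow the loop to slide only along the combined straight segment, which produces $SOC$ or $OSC$ (resp.\ $CSO$ or $COS$), and you then hope that reflecting the loop about its tangent point will put the switching function in conflict with \eqref{control3}. It will not: in the bang--singular regime the loop corresponds to one full traversal of a dashed ellipse in Figure~\ref{phase} and returns to the origin of the phase plane whichever orientation it has, so an $S$ sitting at the origin, a full ellipse for the $O$, and a partial ellipse for the terminal $C$ form a perfectly admissible phase trajectory. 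Indeed $SOC$ and $OSC$ cannot be excluded by the maximum principle alone --- the paper excludes them in the proof of Theorem~\ref{classification} by appealing to this very corollary --- so your reduction of $SOSC$ to $SOC$ is circular.

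The missing idea is that the loop can be relocated \emph{anywhere} along the path, not merely along the straight portion: since $O$ is a closed circle of radius $1/a$ tangent to the path at its attachment point and returning there with the same tangent, it can be excised and re-attached at any other point of the path --- in particular at the terminal point $z_f$, past the $C$ --- without changing the endpoints, the end tangents, the total length, or the maximum curvature. Shifting the loop in $SOSC$ all the way to the end yields a feasible competitor of type $SCO$ with the same value of $a$; this is of type $XCY$ with $X=S$ and $Y=O$, so Lemma~\ref{fullcircle} applies, and since an optimal $SOSC$ would force $SCO$ to be optimal as well, $SOSC$ cannot be optimal. The same device handles $CSOS$, and in the paper it is in fact the single mechanism used for every permutation in the corollary.
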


\begin{proof}
    Note that the loop can be shifted anywhere along the path without changing the maximum curvature. For example a path of type $SOSC$ has the same maximum curvature as the path of type $SCO$ obtained by shifting the loop to the end. The latter is not optimal by Lemma \ref{fullcircle}, and therefore neither is the former. Similarly, the other permutations of $SOSC$ and permutations of $COCC$ are not optimal. 
\end{proof}

\begin{lemma}\label{CCCC}
	A $CCCC$ trajectory is not a solution to (Pc).
\end{lemma}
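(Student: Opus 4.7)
Argue by contradiction. Suppose that a $CCCC$ trajectory $\gamma^*$ with maximum curvature $a>0$ is optimal for Problem~(Pc). Write $\gamma^* = C_1 C_2 C_3 C_4$, with arcs of common radius $1/a$, alternating orientations, and lengths $t_1, t_2, t_3, t_4$. Applying Remark~\ref{rem:ellipses}(i) between each pair of consecutive switchings of the bang--bang control of $\gamma^*$, we obtain $t_2, t_3 > \pi/a$; that is, both middle arcs subtend more than a half circle.

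The plan is to build a perturbed trajectory $\tilde\gamma$ satisfying (i)~$\tilde\gamma$ is feasible for~(Pc); (ii)~$\tilde\gamma$ still has maximum curvature equal to $a$; and (iii)~$\tilde\gamma$ contains a circular arc of some curvature $a' \in (0,a)$. Such a $\tilde\gamma$ would share the objective value $a$ with the assumed optimal $\gamma^*$, hence would itself be optimal. However, on the inserted arc of curvature $a'$ the control satisfies $|v(t)| = a'/a \in (0,1)$, which is neither bang--bang (forbidden by~\eqref{control3}) nor singular (Lemma~\ref{singular} forces $v\equiv 0$ on any singular arc). No costate could therefore make $\tilde\gamma$ compatible with the maximum principle, contradicting the necessary conditions for optimality of $\tilde\gamma$ and hence of $\gamma^*$.

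To construct $\tilde\gamma$, insert a new circular arc $C_\mathrm{new}$ of curvature $a'$ and length $\ell_\mathrm{new}$ at the junction between $C_2$ and $C_3$, while allowing the four original arc lengths to vary as $\ell_1,\ldots,\ell_4$. This yields a six-parameter family of piecewise-circular paths $(a', \ell_\mathrm{new}, \ell_1, \ell_2, \ell_3, \ell_4)$, whose feasibility is encoded by a map $F:\dR^6\to\dR^4$ comprising terminal position, terminal tangent, and total length. The baseline point $(a, 0, t_1, t_2, t_3, t_4)$ (with a vanishing inserted arc) represents $\gamma^*$ itself. Provided the partial Jacobian $\partial F / \partial (\ell_1,\ldots,\ell_4)$ at the baseline has full rank $4$, the implicit function theorem yields a smooth two-parameter family of feasible paths near $\gamma^*$ indexed by $(a', \ell_\mathrm{new})$; selecting any $a' < a$ together with $\ell_\mathrm{new}>0$ sufficiently small furnishes the required $\tilde\gamma$ verifying~(i)--(iii).

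The principal obstacle is verifying the rank condition on this sub-Jacobian. It reduces to an explicit geometric calculation of how terminal position, terminal tangent, and total length respond to infinitesimal changes in each of the four arc lengths. The hypotheses $t_2, t_3 > \pi/a$ and the alternating orientations rule out the degenerate coincidences (such as colinear alignment of arc centres or cancellation among the tangent-rotation contributions) that could otherwise drop the rank below $4$. Should some particular configuration still be degenerate, the insertion point of $C_\mathrm{new}$ can be moved to the $C_1C_2$ or $C_3C_4$ junction, or alternatively the curvature of a single existing outer arc can be perturbed with compensating adjustments to the remaining lengths; the six-dimensional parameter space affords ample flexibility to realise the properties~(i)--(iii) and complete the contradiction.
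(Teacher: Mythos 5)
Your overall strategy is the same as the paper's: produce a feasible comparison curve with the \emph{same} maximum curvature $a$ which violates the necessary conditions of the maximum principle, so that $a$ cannot be the minimum value. The whole difficulty, however, lies in actually constructing that comparison curve, and the construction you propose breaks down at precisely the step you defer. The rank condition on $\partial F/\partial(\ell_1,\dots,\ell_4)$ is not merely unverified --- it is \emph{false for every} $CCCC$ extremal. Lengthening arc $j$ by $\delta\ell_j$ rigidly rotates the entire tail of the curve about the centre $c_{j-1}$ of that arc, so (in complex notation) the $j$-th column of the sub-Jacobian is
\[
\bigl(\epsilon_j\,a\,i\,(z_f-c_{j-1}),\ \epsilon_j\,a,\ 1\bigr)\in\dR^2\times\dR\times\dR,
\qquad \epsilon_j=\pm1 \text{ the orientation of arc } j.
\]
Now the phase portrait forces the two interior arcs to have equal length $\sigma/a$ with $\sigma>\pi$ and opposite orientations, which gives $c_2-c_1=R_{-\sigma}(c_0-c_1)$ and $c_3-c_2=R_{\sigma}(c_1-c_2)$, hence
\[
c_2-c_0=(R_{-\sigma}-I)(c_0-c_1)=c_3-c_1 .
\]
A direct check then shows that the direction $(\delta\ell_1,\dots,\delta\ell_4)=(1,1,-1,-1)$ (for orientations $LRLR$; the analogous vector for $RLRL$) lies in the kernel: the total length changes by $1+1-1-1=0$, the terminal angle by $a(\epsilon_1+\epsilon_2-\epsilon_3-\epsilon_4)=0$, and the terminal position by $-a\,i\,[(c_0-c_2)-(c_1-c_3)]=0$. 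So the sub-Jacobian has rank exactly $3$, always --- the degeneracy is created, not excluded, by the hypotheses $t_2=t_3>\pi/a$ and alternating orientation. Your fallbacks do not repair this: relocating the inserted arc only changes the columns associated with $(a',\ell_{\mathrm{new}})$, not the $4\times4$ block you must invert, and perturbing the curvature of an outer arc is a different argument that you have not carried out.

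For comparison, the paper avoids any local/implicit-function argument: it explicitly deforms the $CCCC$ curve globally (rolling the second circle around the first, replacing the third arc by a taut tangent segment and reflecting the last circle) to obtain a one-parameter family of $CCSC$/$CCC$ candidates with the same maximum curvature, and then uses continuity of the length in the single rolling angle $\beta_0$ together with the intermediate value theorem (and some case analysis) to match the prescribed length. Some such non-infinitesimal construction appears to be genuinely needed here, so the gap in your proposal is not cosmetic.
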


\emph{Sketch of the proof}. Following the observations in Proposition \ref{prop:portrait}, a $CCCC$ trajectory must be of the type pictured in Figure \ref{fig:CCSC}, with the second and third circular arcs having the same length, which is greater than $\pi/a$. The idea behind this proof is very simple: beginning with the $CCCC$ trajectory, we roll the circle $c_1$ around the circle $c_0$ while allowing the third arc to be `pulled-tight' and allowing the fourth arc to relax to the circle centred at $\tilde c_2$, to obtain the dashed trajectory in Figure~\ref{fig:CCSC}. This new trajectory does not satisfy the necessary conditions for optimality because the second circular arc does not traverse a full circle (see Proposition \ref{prop:portrait}). However, it has the same maximum curvature as the $CCCC$ trajectory we started with, and therefore this maximum curvature cannot be minimal.

The reason the proof becomes somewhat technical below is that it does not seem to be easy to solve explicitly for the rolling angle $\beta_0$ which ensures that the dashed curve satisfies the length constraint. Instead we will prove (taking care with a couple of slightly different cases) that the length depends continuously on $\beta_0$, and that there exist values of $\beta_0$ such that the length is greater than and less than the length of the original $CCCC$ trajectory. 

\begin{proof}
\begin{figure}[ht]
\centering


\tikzset{every picture/.style={line width=.75pt}} 

\begin{tikzpicture}[x=0.75pt,y=0.75pt,yscale=-1,xscale=1]

\draw  [dash pattern={on 0.84pt off 2.51pt}] (183.35,155.75) .. controls (183.35,125.96) and (207.5,101.81) .. (237.29,101.81) .. controls (267.08,101.81) and (291.23,125.96) .. (291.23,155.75) .. controls (291.23,185.54) and (267.08,209.69) .. (237.29,209.69) .. controls (207.5,209.69) and (183.35,185.54) .. (183.35,155.75) -- cycle ;
\draw  [draw opacity=0][line width=1.5]  (291.18,154.75) .. controls (291.19,155.08) and (291.19,155.42) .. (291.19,155.75) .. controls (291.19,185.52) and (267.06,209.65) .. (237.29,209.65) .. controls (228.76,209.65) and (220.69,207.66) .. (213.51,204.13) -- (237.29,155.75) -- cycle ; \draw  [line width=1.5]  (291.18,154.75) .. controls (291.19,155.08) and (291.19,155.42) .. (291.19,155.75) .. controls (291.19,185.52) and (267.06,209.65) .. (237.29,209.65) .. controls (228.76,209.65) and (220.69,207.66) .. (213.51,204.13) ;  
\draw  [dash pattern={on 0.84pt off 2.51pt}] (290.81,154.68) .. controls (290.81,124.9) and (314.96,100.75) .. (344.75,100.75) .. controls (374.54,100.75) and (398.69,124.9) .. (398.69,154.68) .. controls (398.69,184.47) and (374.54,208.62) .. (344.75,208.62) .. controls (314.96,208.62) and (290.81,184.47) .. (290.81,154.68) -- cycle ;
\draw  [dash pattern={on 0.84pt off 2.51pt}] (360.19,237.73) .. controls (360.19,207.94) and (384.34,183.79) .. (414.13,183.79) .. controls (443.92,183.79) and (468.07,207.94) .. (468.07,237.73) .. controls (468.07,267.52) and (443.92,291.67) .. (414.13,291.67) .. controls (384.34,291.67) and (360.19,267.52) .. (360.19,237.73) -- cycle ;
\draw  [dash pattern={on 0.84pt off 2.51pt}] (468.07,237.73) .. controls (468.07,207.94) and (492.22,183.79) .. (522.01,183.79) .. controls (551.8,183.79) and (575.94,207.94) .. (575.94,237.73) .. controls (575.94,267.52) and (551.8,291.67) .. (522.01,291.67) .. controls (492.22,291.67) and (468.07,267.52) .. (468.07,237.73) -- cycle ;
\draw    (214,204.23) -- (250.3,221.31) ;
\draw [shift={(252.11,222.16)}, rotate = 205.2] [color={rgb, 255:red, 0; green, 0; blue, 0 }  ][line width=0.75]    (10.93,-3.29) .. controls (6.95,-1.4) and (3.31,-0.3) .. (0,0) .. controls (3.31,0.3) and (6.95,1.4) .. (10.93,3.29)   ;
\draw [shift={(214,204.23)}, rotate = 25.2] [color={rgb, 255:red, 0; green, 0; blue, 0 }  ][fill={rgb, 255:red, 0; green, 0; blue, 0 }  ][line width=0.75]      (0, 0) circle [x radius= 3.35, y radius= 3.35]   ;
\draw    (507.83,186.03) -- (551.74,176.63) ;
\draw [shift={(553.7,176.21)}, rotate = 167.92] [color={rgb, 255:red, 0; green, 0; blue, 0 }  ][line width=0.75]    (10.93,-3.29) .. controls (6.95,-1.4) and (3.31,-0.3) .. (0,0) .. controls (3.31,0.3) and (6.95,1.4) .. (10.93,3.29)   ;
\draw [shift={(507.83,186.03)}, rotate = 347.92] [color={rgb, 255:red, 0; green, 0; blue, 0 }  ][fill={rgb, 255:red, 0; green, 0; blue, 0 }  ][line width=0.75]      (0, 0) circle [x radius= 3.35, y radius= 3.35]   ;
\draw  [draw opacity=0][line width=1.5]  (291.18,154.75) .. controls (291.18,154.74) and (291.18,154.73) .. (291.18,154.72) .. controls (291.18,124.96) and (315.31,100.83) .. (345.08,100.83) .. controls (374.84,100.83) and (398.98,124.96) .. (398.98,154.72) .. controls (398.98,170.99) and (391.77,185.58) .. (380.37,195.46) -- (345.08,154.72) -- cycle ; \draw  [line width=1.5]  (291.18,154.75) .. controls (291.18,154.74) and (291.18,154.73) .. (291.18,154.72) .. controls (291.18,124.96) and (315.31,100.83) .. (345.08,100.83) .. controls (374.84,100.83) and (398.98,124.96) .. (398.98,154.72) .. controls (398.98,170.99) and (391.77,185.58) .. (380.37,195.46) ;  
\draw  [draw opacity=0][line width=1.5]  (468.36,238.3) .. controls (468.07,267.82) and (444.05,291.67) .. (414.46,291.67) .. controls (384.69,291.67) and (360.56,267.54) .. (360.56,237.77) .. controls (360.56,220.54) and (368.65,205.2) .. (381.23,195.33) -- (414.46,237.77) -- cycle ; \draw  [line width=1.5]  (468.36,238.3) .. controls (468.07,267.82) and (444.05,291.67) .. (414.46,291.67) .. controls (384.69,291.67) and (360.56,267.54) .. (360.56,237.77) .. controls (360.56,220.54) and (368.65,205.2) .. (381.23,195.33) ;  
\draw  [draw opacity=0][line width=1.5]  (468.11,238.25) .. controls (468.11,238.07) and (468.11,237.9) .. (468.11,237.73) .. controls (468.11,212.87) and (484.94,191.93) .. (507.84,185.71) -- (522.01,237.73) -- cycle ; \draw  [line width=1.5]  (468.11,238.25) .. controls (468.11,238.07) and (468.11,237.9) .. (468.11,237.73) .. controls (468.11,212.87) and (484.94,191.93) .. (507.84,185.71) ;  
\draw  [dash pattern={on 0.84pt off 2.51pt}]  (237.29,155.75) -- (214,204.23) ;
\draw  [dash pattern={on 0.84pt off 2.51pt}]  (237.29,155.75) -- (345.12,154.75) ;
\draw [shift={(345.12,154.75)}, rotate = 359.47] [color={rgb, 255:red, 0; green, 0; blue, 0 }  ][fill={rgb, 255:red, 0; green, 0; blue, 0 }  ][line width=0.75]      (0, 0) circle [x radius= 3.35, y radius= 3.35]   ;
\draw [shift={(237.29,155.75)}, rotate = 359.47] [color={rgb, 255:red, 0; green, 0; blue, 0 }  ][fill={rgb, 255:red, 0; green, 0; blue, 0 }  ][line width=0.75]      (0, 0) circle [x radius= 3.35, y radius= 3.35]   ;
\draw  [dash pattern={on 0.84pt off 2.51pt}]  (345.08,154.72) -- (398.69,154.68) ;
\draw  [dash pattern={on 0.84pt off 2.51pt}]  (345.12,154.75) -- (414.46,237.77) ;
\draw  [dash pattern={on 0.84pt off 2.51pt}]  (414.46,237.77) -- (522.01,237.73) ;
\draw [shift={(522.01,237.73)}, rotate = 359.98] [color={rgb, 255:red, 0; green, 0; blue, 0 }  ][fill={rgb, 255:red, 0; green, 0; blue, 0 }  ][line width=0.75]      (0, 0) circle [x radius= 3.35, y radius= 3.35]   ;
\draw [shift={(414.46,237.77)}, rotate = 359.98] [color={rgb, 255:red, 0; green, 0; blue, 0 }  ][fill={rgb, 255:red, 0; green, 0; blue, 0 }  ][line width=0.75]      (0, 0) circle [x radius= 3.35, y radius= 3.35]   ;
\draw  [dash pattern={on 0.84pt off 2.51pt}]  (507.84,185.71) -- (522.01,237.73) ;
\draw  [draw opacity=0] (254.24,154.94) .. controls (254.25,155.21) and (254.26,155.48) .. (254.26,155.75) .. controls (254.26,165.35) and (246.66,173.14) .. (237.29,173.14) .. controls (234.54,173.14) and (231.95,172.47) .. (229.66,171.28) -- (237.29,155.75) -- cycle ; \draw   (254.24,154.94) .. controls (254.25,155.21) and (254.26,155.48) .. (254.26,155.75) .. controls (254.26,165.35) and (246.66,173.14) .. (237.29,173.14) .. controls (234.54,173.14) and (231.95,172.47) .. (229.66,171.28) ;  
\draw  [draw opacity=0] (361.71,154.74) .. controls (361.7,159.95) and (359.45,164.62) .. (355.89,167.8) -- (344.75,154.68) -- cycle ; \draw   (361.71,154.74) .. controls (361.7,159.95) and (359.45,164.62) .. (355.89,167.8) ;  
\draw  [draw opacity=0] (397.17,237.38) .. controls (397.28,232.17) and (399.61,227.54) .. (403.22,224.42) -- (414.13,237.73) -- cycle ; \draw   (397.17,237.38) .. controls (397.28,232.17) and (399.61,227.54) .. (403.22,224.42) ;  
\draw  [draw opacity=0] (505.04,237.7) .. controls (505.05,229.65) and (510.41,222.88) .. (517.66,220.91) -- (522.01,237.73) -- cycle ; \draw   (505.04,237.7) .. controls (505.05,229.65) and (510.41,222.88) .. (517.66,220.91) ;  
\draw    (294.76,258.41) ;
\draw  [dash pattern={on 0.84pt off 2.51pt}]  (360.19,237.73) -- (414.46,237.77) ;
\draw  [draw opacity=0][dash pattern={on 5.63pt off 4.5pt}][line width=1.5]  (267.15,110.5) .. controls (281.32,120.12) and (290.66,136.31) .. (290.81,154.68) -- (236.91,155.13) -- cycle ; \draw  [color={rgb, 255:red, 0; green, 0; blue, 0 }  ,draw opacity=1 ][dash pattern={on 5.63pt off 4.5pt}][line width=1.5]  (267.15,110.5) .. controls (281.32,120.12) and (290.66,136.31) .. (290.81,154.68) ;  
\draw  [dash pattern={on 0.84pt off 2.51pt}] (240.37,63.89) .. controls (240.37,34.1) and (264.52,9.95) .. (294.31,9.95) .. controls (324.1,9.95) and (348.24,34.1) .. (348.24,63.89) .. controls (348.24,93.68) and (324.1,117.83) .. (294.31,117.83) .. controls (264.52,117.83) and (240.37,93.68) .. (240.37,63.89) -- cycle ;
\draw  [dash pattern={on 0.84pt off 2.51pt}] (440.94,133.38) .. controls (440.94,103.59) and (465.09,79.44) .. (494.88,79.44) .. controls (524.66,79.44) and (548.81,103.59) .. (548.81,133.38) .. controls (548.81,163.17) and (524.66,187.32) .. (494.88,187.32) .. controls (465.09,187.32) and (440.94,163.17) .. (440.94,133.38) -- cycle ;
\draw  [draw opacity=0][dash pattern={on 5.63pt off 4.5pt}][line width=1.5]  (266.84,110.27) .. controls (251.02,100.88) and (240.41,83.63) .. (240.41,63.89) .. controls (240.41,34.12) and (264.54,9.99) .. (294.31,9.99) .. controls (310.54,9.99) and (325.09,17.16) .. (334.97,28.51) -- (294.31,63.89) -- cycle ; \draw  [dash pattern={on 5.63pt off 4.5pt}][line width=1.5]  (266.84,110.27) .. controls (251.02,100.88) and (240.41,83.63) .. (240.41,63.89) .. controls (240.41,34.12) and (264.54,9.99) .. (294.31,9.99) .. controls (310.54,9.99) and (325.09,17.16) .. (334.97,28.51) ;  
\draw  [draw opacity=0][dash pattern={on 5.63pt off 4.5pt}][line width=1.5]  (507.84,185.71) .. controls (503.68,186.74) and (499.34,187.28) .. (494.88,187.28) .. controls (478.31,187.28) and (463.49,179.8) .. (453.6,168.04) -- (494.88,133.38) -- cycle ; \draw  [dash pattern={on 5.63pt off 4.5pt}][line width=1.5]  (507.84,185.71) .. controls (503.68,186.74) and (499.34,187.28) .. (494.88,187.28) .. controls (478.31,187.28) and (463.49,179.8) .. (453.6,168.04) ;  
\draw  [dash pattern={on 0.84pt off 2.51pt}]  (236.91,155.13) -- (294.31,63.89) ;
\draw [shift={(294.31,63.89)}, rotate = 302.17] [color={rgb, 255:red, 0; green, 0; blue, 0 }  ][fill={rgb, 255:red, 0; green, 0; blue, 0 }  ][line width=0.75]      (0, 0) circle [x radius= 3.35, y radius= 3.35]   ;
\draw [line width=1.5]  [dash pattern={on 5.63pt off 4.5pt}]  (335.29,28.74) -- (453.6,168.04) ;
\draw  [dash pattern={on 0.84pt off 2.51pt}]  (453.6,168.04) -- (494.88,133.38) ;
\draw  [dash pattern={on 0.84pt off 2.51pt}]  (494.88,133.38) -- (507.84,185.71) ;
\draw  [draw opacity=0] (245.37,140.46) .. controls (250.66,143.4) and (254.26,149.15) .. (254.26,155.75) .. controls (254.26,156.1) and (254.25,156.45) .. (254.23,156.8) -- (237.29,155.75) -- cycle ; \draw   (245.37,140.46) .. controls (250.66,143.4) and (254.26,149.15) .. (254.26,155.75) .. controls (254.26,156.1) and (254.25,156.45) .. (254.23,156.8) ;  
\draw  [draw opacity=0] (285.08,78.48) .. controls (280.42,75.38) and (277.34,70.01) .. (277.34,63.89) .. controls (277.34,54.28) and (284.94,46.5) .. (294.31,46.5) .. controls (299.42,46.5) and (304.01,48.82) .. (307.12,52.49) -- (294.31,63.89) -- cycle ; \draw   (285.08,78.48) .. controls (280.42,75.38) and (277.34,70.01) .. (277.34,63.89) .. controls (277.34,54.28) and (284.94,46.5) .. (294.31,46.5) .. controls (299.42,46.5) and (304.01,48.82) .. (307.12,52.49) ;  
\draw  [draw opacity=0] (498.46,150.38) .. controls (497.21,150.66) and (495.91,150.79) .. (494.57,150.77) .. controls (485.21,150.61) and (477.75,142.69) .. (477.91,133.09) .. controls (477.95,131.14) and (478.29,129.27) .. (478.9,127.52) -- (494.88,133.38) -- cycle ; \draw   (498.46,150.38) .. controls (497.21,150.66) and (495.91,150.79) .. (494.57,150.77) .. controls (485.21,150.61) and (477.75,142.69) .. (477.91,133.09) .. controls (477.95,131.14) and (478.29,129.27) .. (478.9,127.52) ;  
\draw    (534.64,177.71) ;
\draw  [dash pattern={on 0.84pt off 2.51pt}]  (294.31,63.89) -- (335.29,28.74) ;
\draw  [dash pattern={on 0.84pt off 2.51pt}]  (294.31,63.89) -- (494.88,133.38) ;
\draw [shift={(494.88,133.38)}, rotate = 19.11] [color={rgb, 255:red, 0; green, 0; blue, 0 }  ][fill={rgb, 255:red, 0; green, 0; blue, 0 }  ][line width=0.75]      (0, 0) circle [x radius= 3.35, y radius= 3.35]   ;
\draw [shift={(294.31,63.89)}, rotate = 19.11] [color={rgb, 255:red, 0; green, 0; blue, 0 }  ][fill={rgb, 255:red, 0; green, 0; blue, 0 }  ][line width=0.75]      (0, 0) circle [x radius= 3.35, y radius= 3.35]   ;
\draw   (325.06,37.47) -- (335.29,28.74) -- (344.02,38.97) -- (333.79,47.7) -- cycle ;
\draw   (444.87,157.81) -- (455.1,149.08) -- (463.83,159.31) -- (453.6,168.04) -- cycle ;
\draw  [draw opacity=0] (304.69,50.13) .. controls (308.69,53.31) and (311.27,58.29) .. (311.27,63.89) .. controls (311.27,66.1) and (310.87,68.21) .. (310.14,70.16) -- (294.31,63.89) -- cycle ; \draw   (304.69,50.13) .. controls (308.69,53.31) and (311.27,58.29) .. (311.27,63.89) .. controls (311.27,66.1) and (310.87,68.21) .. (310.14,70.16) ;  

\draw (185.72,216.36) node [anchor=north west][inner sep=0.75pt]    {$z_{0} ,\theta _{0}$};
\draw (563.43,164.18) node [anchor=north west][inner sep=0.75pt]    {$z_{f} ,\theta _{f}$};
\draw (247.71,171.35) node [anchor=north west][inner sep=0.75pt]    {$\alpha _{0}$};
\draw (365.91,159.43) node [anchor=north west][inner sep=0.75pt]    {$\gamma $};
\draw (381.64,220.48) node [anchor=north west][inner sep=0.75pt]    {$\gamma $};
\draw (486.47,215.34) node [anchor=north west][inner sep=0.75pt]    {$\alpha _{3}$};
\draw (338.39,135.87) node [anchor=north west][inner sep=0.75pt]    {$c_{1}$};
\draw (407.31,244.82) node [anchor=north west][inner sep=0.75pt]    {$c_{2}$};
\draw (525.28,242.58) node [anchor=north west][inner sep=0.75pt]    {$c_{3}$};
\draw (219.57,140.87) node [anchor=north west][inner sep=0.75pt]    {$c_{0}$};
\draw (295.33,74.77) node [anchor=north west][inner sep=0.75pt]    {$\tilde{c}_{1}$};
\draw (495.83,115.25) node [anchor=north west][inner sep=0.75pt]    {$\tilde{c}_{2}$};
\draw (264.47,31.23) node [anchor=north west][inner sep=0.75pt]    {$\beta _{1}$};
\draw (255.1,130.51) node [anchor=north west][inner sep=0.75pt]    {$\beta _{0}$};
\draw (477.57,155.56) node [anchor=north west][inner sep=0.75pt]    {$\beta _{2}$};
\draw (398.45,77.61) node [anchor=north west][inner sep=0.75pt]    {$\ell $};
\draw (459.33,130.78) node [anchor=north west][inner sep=0.75pt]    {$\delta $};
\draw (317.83,51.04) node [anchor=north west][inner sep=0.75pt]    {$\delta $};

\end{tikzpicture}


\caption{Diagram for the proof of Lemma \ref{CCCC}, case 1: trajectory of type $CCCC$ (solid) which satisfies the necessary conditions from the maximum principle, and another of type $CCSC$ (dashed) with the same maximum curvature which does not. Notice that in order to satisfy the necessary conditions the second and third arcs in the $CCCC$ trajectory must subtend the same angle: this follows from \eqref{adjoint3a} and the symmetry about the vertical axis in the phase portrait for $\lambda_3$ (Figure \ref{phase}).}
\label{fig:CCSC}
\end{figure}

Referring to Figure \ref{fig:CCSC}, we consider the points $c_i$ to be in the complex plane with $c_0=0$, and assume that the radius of each of the circles is 1.
 
 {\bf Case 1a}: $\alpha_3<\pi/2$ and $\gamma<\pi/2$. We reflect $c_3$ across $\theta_f$ and rotate $c_1$ about $c_0$ until we obtain the $CCSC$ trajectory pictured in Figure $\ref{fig:CCSC}$. Supposing there exists $\beta_0$ such that the pictured trajectory satisfies our length constraint, then it satisfies all the constraints of Problem~(Pc). As explained in the sketch above, this second trajectory has the same maximum curvature as the original $CCCC$ but cannot be optimal, and therefore the $CCCC$ trajectory is not optimal.
 
It remains to prove that there exists $\beta_0$ such that the $CCSC$ trajectory satisfies the length constraint. Referring to Figure \ref{fig:CCSC} we have
\begin{align*}
    \tilde c_1&= 2e^{i\beta_0} \\
    \tilde c_2&=z_f+e^{i(\pi/2 +\theta_f)}\\
    L&=\alpha_0+\beta_0+\beta_1+\ell +\beta_2 \\
    \beta_0-\beta_1+\beta_2&=\theta_f-\pi/2
\end{align*}
where $L$ is the total length. If we let 
\[
w=\tilde c_2- \tilde c_1 =z_f+e^{i(\pi/2 +\theta_f)}-2e^{i\beta_0}
\]
then 
\begin{align} \nonumber
    \ell&=\sqrt{|w|^2-2}\\ \nonumber
    \delta&=\cos^{-1}(2/|w|)\\ \nonumber
    \beta_1&=\delta-\arg(w)-\pi-\beta_0=\cos^{-1}(2/|w|)-\arg(w)-\pi-\beta_0\\ \nonumber
    \beta_2&= \theta_f-\pi/2+\beta_1-\beta_0=\theta_f-3\pi/2+\cos^{-1}(2/|w|)-\arg(w)-2\beta_0\\  
    L&=\alpha_0+2\beta_1+\ell+\theta_f-\pi/2= \alpha_0+2\cos^{-1}(2/|w|)-2\arg(w)-2\pi-2\beta_0+\ell+\theta_f-\pi/2 \label{newlength}
\end{align}
Since $w$ is determined by the boundary conditions and $\beta_0$, it follows that each of $\ell,\delta, \beta_1, \beta_2$ are determined by the the boundary conditions and $\beta_0$ and in fact $L$ depends continuously on $\beta_0$. If we can prove that there are values of $\beta_0$ for which $L$ is above and below the length constraint, then by the intermediate value theorem a trajectory satisfying the length constraint exists. 

Note that we can make $L$ as large as we like by increasing $\beta_0$, and wrapping the trajectory around the circle centred at $c_0$ if necessary. To prove that $L<t_f$ can be achieved we consider two cases. First we assume that as in Figure \ref{fig:CCSC} the reflected circle centred at $\tilde c_2$ does not intersect the circle centred at $c_1$, and so we can take $\beta_0=0$ and observe that the new $CCSC$ trajectory, where the $S$ is a common tangent to $c_1$ and $\tilde c_2$, is shorter. In the case where the reflected circle does intersect the circle centred at $c_1$, we consider the $CCC$ trajectory pictured in Figure \ref{fig:CCC}. Here $\ell=0$ and $\beta_0$ is at its minimum.

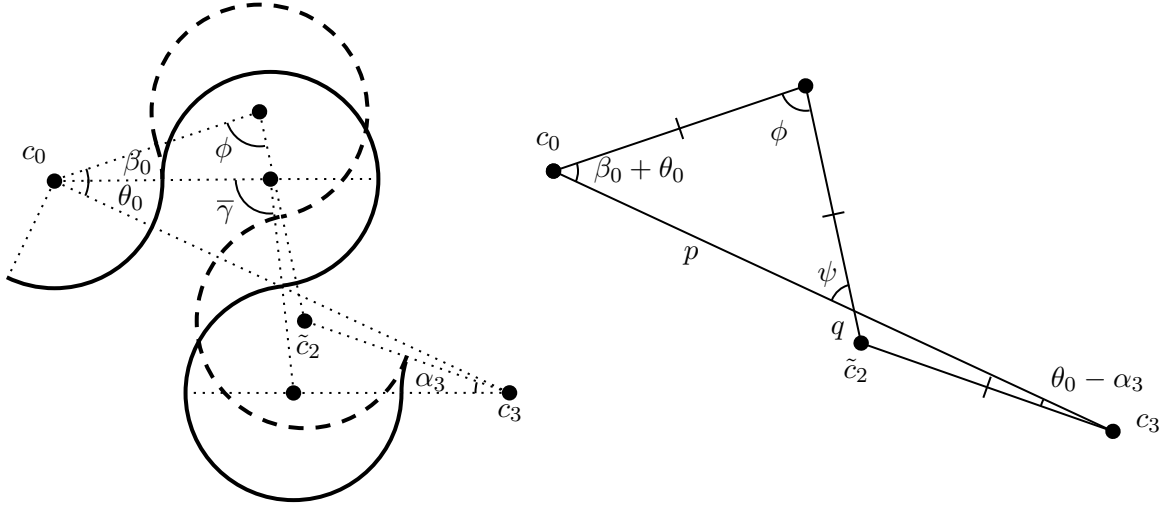
\begin{figure}
        \centering
        

\tikzset{every picture/.style={line width=0.75pt}} 

\begin{tikzpicture}[x=0.75pt,y=0.75pt,yscale=-1,xscale=1]

\draw    (670.66,245.41) -- (544.96,201.01) ;
\draw [shift={(607.81,223.21)}, rotate = 19.45] [color={rgb, 255:red, 0; green, 0; blue, 0 }  ][line width=0.75]    (0,5.59) -- (0,-5.59)   ;
\draw [shift={(670.66,245.41)}, rotate = 199.45] [color={rgb, 255:red, 0; green, 0; blue, 0 }  ][fill={rgb, 255:red, 0; green, 0; blue, 0 }  ][line width=0.75]      (0, 0) circle [x radius= 3.35, y radius= 3.35]   ;
\draw  [draw opacity=0] (634.65,232.64) .. controls (635.02,231.54) and (635.43,230.46) .. (635.89,229.41) -- (670.66,245.41) -- cycle ; \draw   (634.65,232.64) .. controls (635.02,231.54) and (635.43,230.46) .. (635.89,229.41) ;  
\draw    (391.41,114.49) -- (517.25,71.64) ;
\draw [shift={(517.25,71.64)}, rotate = 341.19] [color={rgb, 255:red, 0; green, 0; blue, 0 }  ][fill={rgb, 255:red, 0; green, 0; blue, 0 }  ][line width=0.75]      (0, 0) circle [x radius= 3.35, y radius= 3.35]   ;
\draw [shift={(454.33,93.06)}, rotate = 161.19] [color={rgb, 255:red, 0; green, 0; blue, 0 }  ][line width=0.75]    (0,5.59) -- (0,-5.59)   ;
\draw [shift={(391.41,114.49)}, rotate = 341.19] [color={rgb, 255:red, 0; green, 0; blue, 0 }  ][fill={rgb, 255:red, 0; green, 0; blue, 0 }  ][line width=0.75]      (0, 0) circle [x radius= 3.35, y radius= 3.35]   ;
\draw    (391.41,114.49) -- (670.66,245.41) ;
\draw [shift={(391.41,114.49)}, rotate = 25.12] [color={rgb, 255:red, 0; green, 0; blue, 0 }  ][fill={rgb, 255:red, 0; green, 0; blue, 0 }  ][line width=0.75]      (0, 0) circle [x radius= 3.35, y radius= 3.35]   ;
\draw    (517.25,71.64) -- (544.96,201.01) ;
\draw [shift={(544.96,201.01)}, rotate = 77.91] [color={rgb, 255:red, 0; green, 0; blue, 0 }  ][fill={rgb, 255:red, 0; green, 0; blue, 0 }  ][line width=0.75]      (0, 0) circle [x radius= 3.35, y radius= 3.35]   ;
\draw [shift={(531.11,136.32)}, rotate = 257.91] [color={rgb, 255:red, 0; green, 0; blue, 0 }  ][line width=0.75]    (0,5.59) -- (0,-5.59)   ;
\draw  [draw opacity=0] (519.23,84) .. controls (518.58,84.11) and (517.92,84.16) .. (517.25,84.16) .. controls (511.88,84.16) and (507.31,80.61) .. (505.68,75.67) -- (517.25,71.64) -- cycle ; \draw   (519.23,84) .. controls (518.58,84.11) and (517.92,84.16) .. (517.25,84.16) .. controls (511.88,84.16) and (507.31,80.61) .. (505.68,75.67) ;  
\draw  [draw opacity=0] (403.14,110.94) .. controls (403.46,112.07) and (403.63,113.26) .. (403.63,114.49) .. controls (403.63,116.41) and (403.21,118.24) .. (402.45,119.86) -- (391.41,114.49) -- cycle ; \draw   (403.14,110.94) .. controls (403.46,112.07) and (403.63,113.26) .. (403.63,114.49) .. controls (403.63,116.41) and (403.21,118.24) .. (402.45,119.86) ;  
\draw  [draw opacity=0] (530.34,179.43) .. controls (531.8,175.6) and (535.05,172.68) .. (539.03,171.75) -- (541.73,183.97) -- cycle ; \draw   (530.34,179.43) .. controls (531.8,175.6) and (535.05,172.68) .. (539.03,171.75) ;  
\draw  [draw opacity=0][line width=1.5]  (196.18,118.53) .. controls (196.19,118.86) and (196.19,119.19) .. (196.19,119.53) .. controls (196.19,149.29) and (172.06,173.42) .. (142.29,173.42) .. controls (133.76,173.42) and (125.69,171.44) .. (118.51,167.91) -- (142.29,119.53) -- cycle ; \draw  [line width=1.5]  (196.18,118.53) .. controls (196.19,118.86) and (196.19,119.19) .. (196.19,119.53) .. controls (196.19,149.29) and (172.06,173.42) .. (142.29,173.42) .. controls (133.76,173.42) and (125.69,171.44) .. (118.51,167.91) ;  
\draw  [draw opacity=0][line width=1.5]  (196.18,118.53) .. controls (196.18,118.52) and (196.18,118.51) .. (196.18,118.5) .. controls (196.18,88.73) and (220.31,64.6) .. (250.08,64.6) .. controls (279.84,64.6) and (303.98,88.73) .. (303.98,118.5) .. controls (303.98,146.21) and (283.07,169.03) .. (256.18,172.06) -- (250.08,118.5) -- cycle ; \draw  [line width=1.5]  (196.18,118.53) .. controls (196.18,118.52) and (196.18,118.51) .. (196.18,118.5) .. controls (196.18,88.73) and (220.31,64.6) .. (250.08,64.6) .. controls (279.84,64.6) and (303.98,88.73) .. (303.98,118.5) .. controls (303.98,146.21) and (283.07,169.03) .. (256.18,172.06) ;  
\draw  [draw opacity=0][line width=1.5]  (315.51,226.03) .. controls (315.51,226.05) and (315.51,226.08) .. (315.51,226.1) .. controls (315.51,255.87) and (291.38,280) .. (261.61,280) .. controls (231.85,280) and (207.72,255.87) .. (207.72,226.1) .. controls (207.72,198.01) and (229.2,174.94) .. (256.64,172.43) -- (261.61,226.1) -- cycle ; \draw  [line width=1.5]  (315.51,226.03) .. controls (315.51,226.05) and (315.51,226.08) .. (315.51,226.1) .. controls (315.51,255.87) and (291.38,280) .. (261.61,280) .. controls (231.85,280) and (207.72,255.87) .. (207.72,226.1) .. controls (207.72,198.01) and (229.2,174.94) .. (256.64,172.43) ;  
\draw  [draw opacity=0][line width=1.5]  (315.53,226.06) .. controls (315.53,225.89) and (315.53,225.72) .. (315.53,225.55) .. controls (315.53,220.29) and (316.28,215.2) .. (317.69,210.4) -- (369.43,225.55) -- cycle ; \draw  [line width=1.5]  (315.53,226.06) .. controls (315.53,225.89) and (315.53,225.72) .. (315.53,225.55) .. controls (315.53,220.29) and (316.28,215.2) .. (317.69,210.4) ;  
\draw  [dash pattern={on 0.84pt off 2.51pt}]  (142.29,119.53) -- (119,168) ;
\draw  [dash pattern={on 0.84pt off 2.51pt}]  (142.29,119.53) -- (250.12,118.53) ;
\draw [shift={(250.12,118.53)}, rotate = 359.47] [color={rgb, 255:red, 0; green, 0; blue, 0 }  ][fill={rgb, 255:red, 0; green, 0; blue, 0 }  ][line width=0.75]      (0, 0) circle [x radius= 3.35, y radius= 3.35]   ;
\draw [shift={(142.29,119.53)}, rotate = 359.47] [color={rgb, 255:red, 0; green, 0; blue, 0 }  ][fill={rgb, 255:red, 0; green, 0; blue, 0 }  ][line width=0.75]      (0, 0) circle [x radius= 3.35, y radius= 3.35]   ;
\draw  [dash pattern={on 0.84pt off 2.51pt}]  (250.08,118.5) -- (303.69,118.46) ;
\draw  [dash pattern={on 0.84pt off 2.51pt}]  (250.12,118.53) -- (261.74,226.05) ;
\draw  [dash pattern={on 0.84pt off 2.51pt}]  (261.61,226.1) -- (369.45,226.03) ;
\draw [shift={(369.45,226.03)}, rotate = 359.96] [color={rgb, 255:red, 0; green, 0; blue, 0 }  ][fill={rgb, 255:red, 0; green, 0; blue, 0 }  ][line width=0.75]      (0, 0) circle [x radius= 3.35, y radius= 3.35]   ;
\draw [shift={(261.61,226.1)}, rotate = 359.96] [color={rgb, 255:red, 0; green, 0; blue, 0 }  ][fill={rgb, 255:red, 0; green, 0; blue, 0 }  ][line width=0.75]      (0, 0) circle [x radius= 3.35, y radius= 3.35]   ;
\draw  [dash pattern={on 0.84pt off 2.51pt}]  (369.45,226.03) -- (267.2,189.91) ;
\draw  [draw opacity=0] (159.24,118.72) .. controls (159.25,118.99) and (159.26,119.26) .. (159.26,119.53) .. controls (159.26,122.15) and (158.69,124.63) .. (157.68,126.86) -- (142.29,119.53) -- cycle ; \draw   (159.24,118.72) .. controls (159.25,118.99) and (159.26,119.26) .. (159.26,119.53) .. controls (159.26,122.15) and (158.69,124.63) .. (157.68,126.86) ;  
\draw  [draw opacity=0] (251.48,135.76) .. controls (250.91,135.82) and (250.33,135.85) .. (249.75,135.85) .. controls (240.56,135.85) and (233.08,128.36) .. (232.79,119.01) -- (249.75,118.46) -- cycle ; \draw   (251.48,135.76) .. controls (250.91,135.82) and (250.33,135.85) .. (249.75,135.85) .. controls (240.56,135.85) and (233.08,128.36) .. (232.79,119.01) ;  
\draw  [draw opacity=0] (352.47,226.22) .. controls (352.47,226) and (352.46,225.77) .. (352.46,225.55) .. controls (352.46,223.88) and (352.69,222.28) .. (353.11,220.75) -- (369.43,225.55) -- cycle ; \draw   (352.47,226.22) .. controls (352.47,226) and (352.46,225.77) .. (352.46,225.55) .. controls (352.46,223.88) and (352.69,222.28) .. (353.11,220.75) ;  
\draw    (199.76,222.19) ;
\draw  [dash pattern={on 0.84pt off 2.51pt}]  (207.35,226.06) -- (261.61,226.1) ;
\draw  [draw opacity=0][dash pattern={on 5.63pt off 4.5pt}][line width=1.5]  (192.82,100.2) .. controls (194.95,105.91) and (196.13,112.09) .. (196.18,118.53) -- (142.28,118.97) -- cycle ; \draw  [dash pattern={on 5.63pt off 4.5pt}][line width=1.5]  (192.82,100.2) .. controls (194.95,105.91) and (196.13,112.09) .. (196.18,118.53) ;  
\draw  [draw opacity=0][dash pattern={on 5.63pt off 4.5pt}][line width=1.5]  (193.19,100.73) .. controls (191.61,95.66) and (190.76,90.26) .. (190.76,84.67) .. controls (190.76,54.9) and (214.89,30.77) .. (244.65,30.77) .. controls (274.42,30.77) and (298.55,54.9) .. (298.55,84.67) .. controls (298.55,110.73) and (280.05,132.48) .. (255.46,137.48) -- (244.65,84.67) -- cycle ; \draw  [dash pattern={on 5.63pt off 4.5pt}][line width=1.5]  (193.19,100.73) .. controls (191.61,95.66) and (190.76,90.26) .. (190.76,84.67) .. controls (190.76,54.9) and (214.89,30.77) .. (244.65,30.77) .. controls (274.42,30.77) and (298.55,54.9) .. (298.55,84.67) .. controls (298.55,110.73) and (280.05,132.48) .. (255.46,137.48) ;  
\draw  [draw opacity=0][dash pattern={on 5.63pt off 4.5pt}][line width=1.5]  (318.23,207.3) .. controls (310.99,228.53) and (290.88,243.81) .. (267.2,243.81) .. controls (237.43,243.81) and (213.3,219.68) .. (213.3,189.91) .. controls (213.3,163.85) and (231.8,142.11) .. (256.38,137.1) -- (267.2,189.91) -- cycle ; \draw  [dash pattern={on 5.63pt off 4.5pt}][line width=1.5]  (318.23,207.3) .. controls (310.99,228.53) and (290.88,243.81) .. (267.2,243.81) .. controls (237.43,243.81) and (213.3,219.68) .. (213.3,189.91) .. controls (213.3,163.85) and (231.8,142.11) .. (256.38,137.1) ;  
\draw  [dash pattern={on 0.84pt off 2.51pt}]  (142.28,118.97) -- (244.65,84.67) ;
\draw [shift={(244.65,84.67)}, rotate = 341.47] [color={rgb, 255:red, 0; green, 0; blue, 0 }  ][fill={rgb, 255:red, 0; green, 0; blue, 0 }  ][line width=0.75]      (0, 0) circle [x radius= 3.35, y radius= 3.35]   ;
\draw  [dash pattern={on 0.84pt off 2.51pt}]  (142.29,119.53) -- (369.45,226.03) ;
\draw  [draw opacity=0] (158.16,113.37) .. controls (158.87,115.28) and (159.26,117.36) .. (159.26,119.53) .. controls (159.26,119.88) and (159.25,120.23) .. (159.23,120.57) -- (142.29,119.53) -- cycle ; \draw   (158.16,113.37) .. controls (158.87,115.28) and (159.26,117.36) .. (159.26,119.53) .. controls (159.26,119.88) and (159.25,120.23) .. (159.23,120.57) ;  
\draw  [dash pattern={on 0.84pt off 2.51pt}]  (244.65,84.67) -- (267.2,189.91) ;
\draw [shift={(267.2,189.91)}, rotate = 77.91] [color={rgb, 255:red, 0; green, 0; blue, 0 }  ][fill={rgb, 255:red, 0; green, 0; blue, 0 }  ][line width=0.75]      (0, 0) circle [x radius= 3.35, y radius= 3.35]   ;
\draw  [draw opacity=0] (247.4,101.83) .. controls (246.51,101.98) and (245.59,102.06) .. (244.65,102.06) .. controls (237.19,102.06) and (230.86,97.12) .. (228.59,90.26) -- (244.65,84.67) -- cycle ; \draw   (247.4,101.83) .. controls (246.51,101.98) and (245.59,102.06) .. (244.65,102.06) .. controls (237.19,102.06) and (230.86,97.12) .. (228.59,90.26) ;  
\draw    (428.06,230.7) ;

\draw (638.17,211.26) node [anchor=north west][inner sep=0.75pt]    {$\theta _{0} -\alpha _{3}$};
\draw (680.62,235.6) node [anchor=north west][inner sep=0.75pt]    {$c_{3}$};
\draw (381.14,93.77) node [anchor=north west][inner sep=0.75pt]    {$c_{0}$};
\draw (410.01,106.07) node [anchor=north west][inner sep=0.75pt]    {$\beta _{0} +\theta _{0}$};
\draw (498.09,87.09) node [anchor=north west][inner sep=0.75pt]    {$\phi $};
\draw (534.66,207.14) node [anchor=north west][inner sep=0.75pt]    {$\tilde{c}_{2}$};
\draw (521.72,158.38) node [anchor=north west][inner sep=0.75pt]    {$\psi $};
\draw (454.95,150.86) node [anchor=north west][inner sep=0.75pt]    {$p$};
\draw (528.4,188.59) node [anchor=north west][inner sep=0.75pt]    {$q$};
\draw (221.91,127.21) node [anchor=north west][inner sep=0.75pt]    {$\overline{\gamma }$};
\draw (320.9,215.0) node [anchor=north west][inner sep=0.75pt]    {$\alpha _{3}$};
\draw (361.96,230.44) node [anchor=north west][inner sep=0.75pt]    {$c_{3}$};
\draw (124.57,100.65) node [anchor=north west][inner sep=0.75pt]    {$c_{0}$};
\draw (172.47,120.01) node [anchor=north west][inner sep=0.75pt]    {$\theta _{0}$};
\draw (175.1,102.29) node [anchor=north west][inner sep=0.75pt]    {$\beta _{0}$};
\draw (220.83,95.82) node [anchor=north west][inner sep=0.75pt]    {$\phi $};
\draw (261.2,195.31) node [anchor=north west][inner sep=0.75pt]    {$\tilde{c}_{2}$};

\end{tikzpicture}


        \caption{ Left: trajectory of type $CCCC$ (solid) which satisfies the necessary conditions for optimality, and another of type $CCC$ (dashed) which we prove is shorter, and can therefore be extended to a trajectory of type $CCSC$ (as in Figure \ref{fig:CCSC}) with the correct length. Right: enlargement of the internal triangles from the $CCC$ trajectory. }\label{fig:CCC}
    \end{figure}

The length of the original trajectory is $L_1=\alpha_0+2\pi+2\gamma+\alpha_3 $, and from \eqref{newlength} the length of the $CCC$ trajectory is $L_2 = \alpha_0+2\beta_1+\theta_f-\pi/2$. Hence, using $\alpha_3=\frac{\pi}{2}-\theta_f$
\[
L_1-L_2=2\pi +2\gamma+2\alpha_3-2\beta_1
\]
and our aim is to prove that this quantity is positive. With the angles labelled as in Figure \ref{fig:CCC}, noting that $\gamma =\pi-\bar{\gamma} $ and $\beta_1=2\pi-\phi$, we need to prove that $\bar \gamma < \phi +\alpha_3$. For this, defining $s:=|c_3-c_0|$ note that 
\[
|\tilde c_2- c_0|=8-8\cos\phi = 4+s^2-2s\cos(\theta_0-\alpha_3)
\]
and that this is still true if $\alpha_3>\theta_0$. Supposing we allow $\alpha_3$ to vary, then we determine that 
\[
\frac{d\phi}{d\beta_0}=-\frac{s\sin(\theta_0-\alpha_3)}{4 \sin\phi}\frac{d\alpha_3}{d\beta_0}
\]
Note that as $\beta_0$ increases so too does $\alpha_3$, and so $\frac{d\phi}{d\beta_0}$ is negative for $\alpha_3<\theta_0$ and positive for $\alpha_3>\theta_0$. It follows that
\[
\frac{d}{d\beta_0}(\phi+\alpha_3)=\frac{d\phi}{d\beta_0}\left (1-\frac{s\sin(\theta_0-\alpha_3)}{4 \sin\phi}\right ).
\]
Since $\phi+\alpha_3=\bar \gamma$ at $\beta_0=0$, if we can show that the above quantity is always positive then we have the required result: $\bar \gamma<\phi+\alpha_3$. When $\alpha_3>\theta_0$ this is immediate: all the quantities on the right hand side are positive. For $\alpha_3<\theta_0$ we refer to the right hand side of Figure \ref{fig:CCC} and observe that
\[
\frac{\sin(\theta_0-\alpha_3)}{q}=\frac{\sin\psi}{2}=\frac{\sin\phi}{p}
\]
and therefore $\frac{\sin(\theta_0-\alpha_3)}{\sin(\phi)}=\frac{q}{p}$. Now $\frac{d}{d\beta_0}(\phi+\alpha_3)$ will be positive if $4p>qs$. Comparing with Figure \ref{fig:CCC}, we know $q<1$ and since $\bar \gamma>\pi/2$ (by assumption) we have that $p>s/4$,  so we are done with this case. 

{\bf Case 1b:} $\alpha_3\geq \pi/2$ and $\gamma<\pi/2$. In this case we cannot reflect $c_3$ about $\theta_f$. We set $\tilde c_2$ to $c_3+2i$ instead (i.e. vertically above $c_3$), and then construct a trajectory of type $CCSCC$. This is not very different to the previous case.

\begin{figure}[h]
    \begin{center}


\tikzset{every picture/.style={line width=0.75pt}} 

\begin{tikzpicture}[x=0.75pt,y=0.75pt,yscale=-1,xscale=1]

\draw  [dash pattern={on 0.84pt off 2.51pt}] (234.35,155.08) .. controls (234.35,125.29) and (258.5,101.15) .. (288.29,101.15) .. controls (318.08,101.15) and (342.23,125.29) .. (342.23,155.08) .. controls (342.23,184.87) and (318.08,209.02) .. (288.29,209.02) .. controls (258.5,209.02) and (234.35,184.87) .. (234.35,155.08) -- cycle ;
\draw  [draw opacity=0][line width=1.5]  (342.18,154.09) .. controls (342.19,154.42) and (342.19,154.75) .. (342.19,155.08) .. controls (342.19,184.85) and (318.06,208.98) .. (288.29,208.98) .. controls (279.76,208.98) and (271.69,207) .. (264.51,203.47) -- (288.29,155.08) -- cycle ; \draw  [line width=1.5]  (342.18,154.09) .. controls (342.19,154.42) and (342.19,154.75) .. (342.19,155.08) .. controls (342.19,184.85) and (318.06,208.98) .. (288.29,208.98) .. controls (279.76,208.98) and (271.69,207) .. (264.51,203.47) ;  
\draw  [dash pattern={on 0.84pt off 2.51pt}] (341.81,154.02) .. controls (341.81,124.23) and (365.96,100.08) .. (395.75,100.08) .. controls (425.54,100.08) and (449.69,124.23) .. (449.69,154.02) .. controls (449.69,183.81) and (425.54,207.96) .. (395.75,207.96) .. controls (365.96,207.96) and (341.81,183.81) .. (341.81,154.02) -- cycle ;
\draw  [dash pattern={on 0.84pt off 2.51pt}] (309.19,257.06) .. controls (309.19,227.27) and (333.34,203.12) .. (363.13,203.12) .. controls (392.92,203.12) and (417.07,227.27) .. (417.07,257.06) .. controls (417.07,286.85) and (392.92,311) .. (363.13,311) .. controls (333.34,311) and (309.19,286.85) .. (309.19,257.06) -- cycle ;
\draw  [dash pattern={on 0.84pt off 2.51pt}] (416.74,257.02) .. controls (416.74,227.23) and (440.89,203.08) .. (470.68,203.08) .. controls (500.47,203.08) and (524.62,227.23) .. (524.62,257.02) .. controls (524.62,286.81) and (500.47,310.96) .. (470.68,310.96) .. controls (440.89,310.96) and (416.74,286.81) .. (416.74,257.02) -- cycle ;
\draw    (265,203.56) -- (301.3,220.64) ;
\draw [shift={(303.11,221.49)}, rotate = 205.2] [color={rgb, 255:red, 0; green, 0; blue, 0 }  ][line width=0.75]    (10.93,-3.29) .. controls (6.95,-1.4) and (3.31,-0.3) .. (0,0) .. controls (3.31,0.3) and (6.95,1.4) .. (10.93,3.29)   ;
\draw [shift={(265,203.56)}, rotate = 25.2] [color={rgb, 255:red, 0; green, 0; blue, 0 }  ][fill={rgb, 255:red, 0; green, 0; blue, 0 }  ][line width=0.75]      (0, 0) circle [x radius= 3.35, y radius= 3.35]   ;
\draw    (456.51,205.01) -- (500.42,195.61) ;
\draw [shift={(502.38,195.19)}, rotate = 167.92] [color={rgb, 255:red, 0; green, 0; blue, 0 }  ][line width=0.75]    (10.93,-3.29) .. controls (6.95,-1.4) and (3.31,-0.3) .. (0,0) .. controls (3.31,0.3) and (6.95,1.4) .. (10.93,3.29)   ;
\draw [shift={(456.51,205.01)}, rotate = 347.92] [color={rgb, 255:red, 0; green, 0; blue, 0 }  ][fill={rgb, 255:red, 0; green, 0; blue, 0 }  ][line width=0.75]      (0, 0) circle [x radius= 3.35, y radius= 3.35]   ;
\draw  [draw opacity=0][line width=1.5]  (342.18,154.09) .. controls (342.18,154.08) and (342.18,154.07) .. (342.18,154.06) .. controls (342.18,124.29) and (366.31,100.16) .. (396.08,100.16) .. controls (425.84,100.16) and (449.98,124.29) .. (449.98,154.06) .. controls (449.98,183.83) and (425.84,207.96) .. (396.08,207.96) .. controls (390.26,207.96) and (384.66,207.03) .. (379.41,205.33) -- (396.08,154.06) -- cycle ; \draw  [line width=1.5]  (342.18,154.09) .. controls (342.18,154.08) and (342.18,154.07) .. (342.18,154.06) .. controls (342.18,124.29) and (366.31,100.16) .. (396.08,100.16) .. controls (425.84,100.16) and (449.98,124.29) .. (449.98,154.06) .. controls (449.98,183.83) and (425.84,207.96) .. (396.08,207.96) .. controls (390.26,207.96) and (384.66,207.03) .. (379.41,205.33) ;  
\draw  [draw opacity=0][line width=1.5]  (417.03,257.6) .. controls (416.74,287.12) and (392.72,310.96) .. (363.13,310.96) .. controls (333.36,310.96) and (309.23,286.83) .. (309.23,257.06) .. controls (309.23,227.29) and (333.36,203.16) .. (363.13,203.16) .. controls (369.33,203.16) and (375.28,204.21) .. (380.82,206.13) -- (363.13,257.06) -- cycle ; \draw  [line width=1.5]  (417.03,257.6) .. controls (416.74,287.12) and (392.72,310.96) .. (363.13,310.96) .. controls (333.36,310.96) and (309.23,286.83) .. (309.23,257.06) .. controls (309.23,227.29) and (333.36,203.16) .. (363.13,203.16) .. controls (369.33,203.16) and (375.28,204.21) .. (380.82,206.13) ;  
\draw  [draw opacity=0][line width=1.5]  (416.78,257.54) .. controls (416.78,257.37) and (416.78,257.19) .. (416.78,257.02) .. controls (416.78,232.16) and (433.61,211.23) .. (456.51,205.01) -- (470.68,257.02) -- cycle ; \draw  [line width=1.5]  (416.78,257.54) .. controls (416.78,257.37) and (416.78,257.19) .. (416.78,257.02) .. controls (416.78,232.16) and (433.61,211.23) .. (456.51,205.01) ;  
\draw  [dash pattern={on 0.84pt off 2.51pt}]  (288.29,155.08) -- (265,203.56) ;
\draw  [dash pattern={on 0.84pt off 2.51pt}]  (288.29,155.08) -- (396.12,154.09) ;
\draw [shift={(396.12,154.09)}, rotate = 359.47] [color={rgb, 255:red, 0; green, 0; blue, 0 }  ][fill={rgb, 255:red, 0; green, 0; blue, 0 }  ][line width=0.75]      (0, 0) circle [x radius= 3.35, y radius= 3.35]   ;
\draw [shift={(288.29,155.08)}, rotate = 359.47] [color={rgb, 255:red, 0; green, 0; blue, 0 }  ][fill={rgb, 255:red, 0; green, 0; blue, 0 }  ][line width=0.75]      (0, 0) circle [x radius= 3.35, y radius= 3.35]   ;
\draw  [dash pattern={on 0.84pt off 2.51pt}]  (396.08,154.06) -- (449.69,154.02) ;
\draw  [dash pattern={on 0.84pt off 2.51pt}]  (396.12,154.09) -- (363.13,257.06) ;
\draw  [dash pattern={on 0.84pt off 2.51pt}]  (363.13,257.06) -- (470.68,257.02) ;
\draw [shift={(470.68,257.02)}, rotate = 359.98] [color={rgb, 255:red, 0; green, 0; blue, 0 }  ][fill={rgb, 255:red, 0; green, 0; blue, 0 }  ][line width=0.75]      (0, 0) circle [x radius= 3.35, y radius= 3.35]   ;
\draw [shift={(363.13,257.06)}, rotate = 359.98] [color={rgb, 255:red, 0; green, 0; blue, 0 }  ][fill={rgb, 255:red, 0; green, 0; blue, 0 }  ][line width=0.75]      (0, 0) circle [x radius= 3.35, y radius= 3.35]   ;
\draw  [dash pattern={on 0.84pt off 2.51pt}]  (456.51,205.01) -- (470.68,257.02) ;
\draw  [draw opacity=0] (305.24,154.28) .. controls (305.25,154.54) and (305.26,154.81) .. (305.26,155.08) .. controls (305.26,164.69) and (297.66,172.47) .. (288.29,172.47) .. controls (285.54,172.47) and (282.95,171.8) .. (280.66,170.62) -- (288.29,155.08) -- cycle ; \draw   (305.24,154.28) .. controls (305.25,154.54) and (305.26,154.81) .. (305.26,155.08) .. controls (305.26,164.69) and (297.66,172.47) .. (288.29,172.47) .. controls (285.54,172.47) and (282.95,171.8) .. (280.66,170.62) ;  
\draw  [draw opacity=0] (412.71,154.02) .. controls (412.71,163.62) and (405.12,171.41) .. (395.75,171.41) .. controls (393.98,171.41) and (392.28,171.13) .. (390.67,170.62) -- (395.75,154.02) -- cycle ; \draw    (412.71,154.02) .. controls (412.71,163.62) and (405.12,171.41) .. (395.75,171.41) .. controls (393.98,171.41) and (392.28,171.13) .. (390.67,170.62) ;  
\draw  [draw opacity=0] (346.5,256.81) .. controls (346.5,256.81) and (346.5,256.81) .. (346.5,256.81) .. controls (346.66,247.21) and (354.39,239.55) .. (363.76,239.71) .. controls (365.48,239.74) and (367.13,240.03) .. (368.68,240.55) -- (363.46,257.1) -- cycle ; \draw   (346.5,256.81) .. controls (346.5,256.81) and (346.5,256.81) .. (346.5,256.81) .. controls (346.66,247.21) and (354.39,239.55) .. (363.76,239.71) .. controls (365.48,239.74) and (367.13,240.03) .. (368.68,240.55) ;  
\draw  [draw opacity=0] (453.71,256.99) .. controls (453.72,248.94) and (459.08,242.17) .. (466.34,240.21) -- (470.68,257.02) -- cycle ; \draw   (453.71,256.99) .. controls (453.72,248.94) and (459.08,242.17) .. (466.34,240.21) ;  
\draw    (345.76,257.75) ;
\draw  [dash pattern={on 0.84pt off 2.51pt}]  (309.19,257.06) -- (363.46,257.1) ;
\draw  [draw opacity=0][dash pattern={on 5.63pt off 4.5pt}][line width=1.5]  (318.15,109.83) .. controls (332.32,119.45) and (341.66,135.64) .. (341.81,154.02) -- (287.91,154.46) -- cycle ; \draw  [color={rgb, 255:red, 0; green, 0; blue, 0 }  ,draw opacity=1 ][dash pattern={on 5.63pt off 4.5pt}][line width=1.5]  (318.15,109.83) .. controls (332.32,119.45) and (341.66,135.64) .. (341.81,154.02) ;  
\draw  [dash pattern={on 0.84pt off 2.51pt}] (291.37,63.22) .. controls (291.37,33.43) and (315.52,9.28) .. (345.31,9.28) .. controls (375.1,9.28) and (399.24,33.43) .. (399.24,63.22) .. controls (399.24,93.01) and (375.1,117.16) .. (345.31,117.16) .. controls (315.52,117.16) and (291.37,93.01) .. (291.37,63.22) -- cycle ;
\draw  [draw opacity=0][dash pattern={on 5.63pt off 4.5pt}][line width=1.5]  (317.84,109.61) .. controls (302.02,100.22) and (291.41,82.96) .. (291.41,63.22) .. controls (291.41,33.46) and (315.54,9.32) .. (345.31,9.32) .. controls (363.84,9.32) and (380.18,18.68) .. (389.88,32.92) -- (345.31,63.22) -- cycle ; \draw  [dash pattern={on 5.63pt off 4.5pt}][line width=1.5]  (317.84,109.61) .. controls (302.02,100.22) and (291.41,82.96) .. (291.41,63.22) .. controls (291.41,33.46) and (315.54,9.32) .. (345.31,9.32) .. controls (363.84,9.32) and (380.18,18.68) .. (389.88,32.92) ;  
\draw  [draw opacity=0][dash pattern={on 5.63pt off 4.5pt}][line width=1.5]  (515.91,227.7) .. controls (521.39,236.14) and (524.58,246.21) .. (524.58,257.02) .. controls (524.58,286.79) and (500.44,310.92) .. (470.68,310.92) .. controls (440.91,310.92) and (416.78,286.79) .. (416.78,257.02) .. controls (416.78,257.01) and (416.78,257.01) .. (416.78,257) -- (470.68,257.02) -- cycle ; \draw  [dash pattern={on 5.63pt off 4.5pt}][line width=1.5]  (515.91,227.7) .. controls (521.39,236.14) and (524.58,246.21) .. (524.58,257.02) .. controls (524.58,286.79) and (500.44,310.92) .. (470.68,310.92) .. controls (440.91,310.92) and (416.78,286.79) .. (416.78,257.02) .. controls (416.78,257.01) and (416.78,257.01) .. (416.78,257) ;  
\draw  [dash pattern={on 0.84pt off 2.51pt}]  (287.91,154.46) -- (345.31,63.22) ;
\draw [shift={(345.31,63.22)}, rotate = 302.17] [color={rgb, 255:red, 0; green, 0; blue, 0 }  ][fill={rgb, 255:red, 0; green, 0; blue, 0 }  ][line width=0.75]      (0, 0) circle [x radius= 3.35, y radius= 3.35]   ;
\draw [line width=1.5]  [dash pattern={on 5.63pt off 4.5pt}]  (389.88,32.92) -- (515.91,227.7) ;
\draw  [dash pattern={on 0.84pt off 2.51pt}]  (470.68,257.02) -- (515.91,227.7) ;
\draw  [draw opacity=0] (296.37,139.79) .. controls (301.66,142.73) and (305.26,148.48) .. (305.26,155.08) .. controls (305.26,155.44) and (305.25,155.78) .. (305.23,156.13) -- (288.29,155.08) -- cycle ; \draw   (296.37,139.79) .. controls (301.66,142.73) and (305.26,148.48) .. (305.26,155.08) .. controls (305.26,155.44) and (305.25,155.78) .. (305.23,156.13) ;  
\draw  [draw opacity=0] (336.08,77.82) .. controls (331.42,74.72) and (328.34,69.34) .. (328.34,63.22) .. controls (328.34,53.62) and (335.94,45.83) .. (345.31,45.83) .. controls (350.42,45.83) and (355.01,48.16) .. (358.12,51.83) -- (345.31,63.22) -- cycle ; \draw   (336.08,77.82) .. controls (331.42,74.72) and (328.34,69.34) .. (328.34,63.22) .. controls (328.34,53.62) and (335.94,45.83) .. (345.31,45.83) .. controls (350.42,45.83) and (355.01,48.16) .. (358.12,51.83) ;  
\draw  [draw opacity=0] (485.02,247.71) .. controls (486.73,250.48) and (487.7,253.79) .. (487.64,257.31) .. controls (487.47,266.92) and (479.75,274.57) .. (470.38,274.41) .. controls (461.01,274.25) and (453.55,266.33) .. (453.71,256.73) .. controls (453.75,254.78) and (454.09,252.9) .. (454.7,251.16) -- (470.68,257.02) -- cycle ; \draw   (485.02,247.71) .. controls (486.73,250.48) and (487.7,253.79) .. (487.64,257.31) .. controls (487.47,266.92) and (479.75,274.57) .. (470.38,274.41) .. controls (461.01,274.25) and (453.55,266.33) .. (453.71,256.73) .. controls (453.75,254.78) and (454.09,252.9) .. (454.7,251.16) ;  
\draw  [dash pattern={on 0.84pt off 2.51pt}]  (345.31,63.22) -- (389.88,32.92) ;
\draw   (378.96,40.77) -- (389.88,32.92) -- (397.74,43.84) -- (386.81,51.69) -- cycle ;
\draw   (497.32,223.69) -- (508.62,216.4) -- (515.91,227.7) -- (504.61,234.99) -- cycle ;
\draw  [draw opacity=0] (355.69,49.46) .. controls (357.26,50.72) and (358.62,52.24) .. (359.68,53.98) -- (345.31,63.22) -- cycle ; \draw   (355.69,49.46) .. controls (357.26,50.72) and (358.62,52.24) .. (359.68,53.98) ;  

\draw (236.72,215.7) node [anchor=north west][inner sep=0.75pt]    {$x_{0} ,\theta _{0}$};
\draw (510.43,183.52) node [anchor=north west][inner sep=0.75pt]    {$x_{f} ,\theta _{f}$};
\draw (298.71,170.68) node [anchor=north west][inner sep=0.75pt]    {$\alpha _{0}$};
\draw (409.91,168.77) node [anchor=north west][inner sep=0.75pt]    {$\gamma $};
\draw (341.64,225.81) node [anchor=north west][inner sep=0.75pt]    {$\gamma $};
\draw (434.47,224.67) node [anchor=north west][inner sep=0.75pt]    {$\alpha _{3}$};
\draw (389.39,129.21) node [anchor=north west][inner sep=0.75pt]    {$c_{1}$};
\draw (347.76,261.15) node [anchor=north west][inner sep=0.75pt]    {$c_{2}$};
\draw (471.28,222.91) node [anchor=north west][inner sep=0.75pt]    {$c_{3}$};
\draw (270.57,129.21) node [anchor=north west][inner sep=0.75pt]    {$c_{0}$};
\draw (347.31,66.62) node [anchor=north west][inner sep=0.75pt]    {$\tilde{c}_{1}$};
\draw (315.47,30.57) node [anchor=north west][inner sep=0.75pt]    {$\beta _{1}$};
\draw (312.1,125.84) node [anchor=north west][inner sep=0.75pt]    {$\beta _{0}$};
\draw (470.57,277.9) node [anchor=north west][inner sep=0.75pt]    {$\beta _{2}$};
\draw (461.45,114.94) node [anchor=north west][inner sep=0.75pt]    {$\ell $};

\end{tikzpicture}
        
    \end{center}
    
    \caption{Diagram for Case 2 ($\gamma\geq \frac{\pi}{2}$) in the proof of Lemma \ref{CCCC}: a trajectory of type $CCCC$ (solid) which satisfies the necessary conditions from the maximum principle, and another of type $CCSC$ (dashed) with the same maximum curvature which does not.}
    \label{fig:case2}
\end{figure}
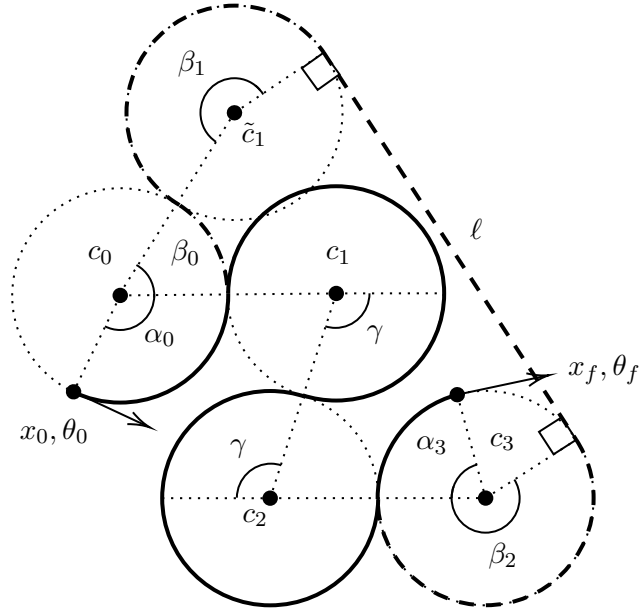

{\bf Case 2:} $\gamma\geq \pi/2$. Referring to Figure \ref{fig:case2}, let $\tilde{w}:=c_3-\tilde c_1$ and then we have
\begin{align*}
    \tilde c_1&=2 e^{i\beta_0} \\
    c_3&=z_f+e^{i(\theta_f-\pi/2)}\\
    l&=|\tilde{w}| \\
    L_2&=\alpha_0+\beta_0+\beta_1+\ell+\beta_2+\alpha_3\\
    &=\alpha_0+2\beta_0+2\pi+|\tilde{w}|+\alpha_3
\end{align*}
Arguing as in case 1, we have that $L_2$ depends continuously on $\beta_0$ and it is enough to show that $L_2<L_1$ when $\beta_0=0$, where $L_1=\alpha_0+2\pi+2\gamma+\alpha_3$ is the length of the original trajectory. Indeed, at $\beta_0$ we have $L_1-L_2=2\gamma-|\tilde{w}|$. Noting that 
$|\tilde{w}|^2=8+8\cos\gamma$
and recalling that we have assumed $\gamma>\pi/2$ we have that $|\tilde{w}|^2\leq 8<(2\gamma)^2$ and therefore $L_1>L_2$ at $\beta_0=0$.
\end{proof}

\begin{theorem}\label{classification}
    Any solution to problem (P) is of type $CXC$ or sub-path 
    thereof, where $X$ can be $S$, $C$, or $O$; or of type $SOS$ or sub-path thereof. The radii of all the $C$ and $O$ arcs in a solution are the same.
\end{theorem}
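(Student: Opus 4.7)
The plan is to combine the preceding lemmas with a short geometric construction to exhaustively classify the admissible segment sequences. First I would invoke Lemma~\ref{singular} together with Remark~\ref{u_sgn} to write any optimal trajectory as a finite concatenation of segments of type $S$, $C$, or $O$ in which every circular arc has curvature exactly $a$ and hence radius $1/a$; this immediately yields the ``equal radii'' assertion and reduces the theorem to classifying the allowed segment sequences.

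Next I would classify the admissible three-segment paths $XYZ$. When $Y=C$, Lemma~\ref{fullcircle} forces $X=Z=C$, so only $CCC$ survives. When $Y=S$ the candidates are $CSC$, $CSO$, $OSC$, $OSO$. For $CSO$ and $OSC$ I would use a shift-loop argument: detaching the loop $O$ and reattaching it tangentially at the opposite end of the path yields a feasible trajectory of type $OCS$ or $SCO$ with identical length, boundary conditions, and maximum curvature $a$; these shifted trajectories violate Lemma~\ref{fullcircle}, so by Pontryagin's principle they cannot be minimizers, and therefore neither can the originals. The case $OSO$ is eliminated by a loop-enlargement: replacing the two radius-$1/a$ loops by a single loop of radius $2/a$ placed on the straight segment produces a feasible $SOS$ trajectory of the same length and boundary conditions but with maximum curvature $a/2<a$. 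The analogous arguments for $Y=O$ retain $COC$ and $SOS$ while eliminating $COS$ and $SOC$ by shifting the middle loop to an endpoint to produce a forbidden middle-$C$ configuration.

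With the admissible three-segment trajectories restricted to $\{CCC,CSC,COC,SOS\}$, I would rule out trajectories of four or more segments using Lemma~\ref{subtrajectory}: in any four-segment path $a_1a_2a_3a_4$, both sub-paths $a_1a_2a_3$ and $a_2a_3a_4$ must lie in this set. A case analysis on $a_2$ gives either $CCCC$ (excluded by Lemma~\ref{CCCC}) or an immediate contradiction: $a_2=S$ forces $a_1=a_3=C$ via $CSC$, but then $a_2a_3a_4$ with middle $C$ must be $CCC$, contradicting $a_2=S$; and $a_2=O$ forces $(a_1,a_3)\in\{(C,C),(S,S)\}$, so $a_3\in\{C,S\}$ and $a_2a_3a_4$ with middle $a_3$ requires $a_2=a_3$, contradicting $a_2=O$. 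Since any longer path would contain a four-segment sub-path, optimal trajectories have at most three segments, completing the classification.

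The main obstacle lies in the second step, where the shift-loop and enlarge-loop constructions must be carried out carefully. The argument rests on two geometric facts: a loop of radius $1/a$ attached tangentially at any point of the curve contributes exactly $2\pi/a$ to the arc length without altering the endpoint or tangent at the attachment point; and two such loops on a common straight segment can be combined into a single loop of radius $2/a$ contributing the same total length $4\pi/a$ while again preserving endpoints and tangents. Once these replacement constructions are in place, the remaining case analysis is routine.
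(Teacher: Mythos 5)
Your proposal is correct and follows essentially the same route as the paper: decompose an optimal trajectory into $S/C/O$ segments of common radius $1/a$, use Lemma~\ref{fullcircle} together with loop-shifting and loop-merging constructions (the content of Corollary~\ref{OSC} and the opening paragraph of the paper's proof) to reduce the admissible three-segment types to $CCC$, $CSC$, $COC$, $SOS$, and then invoke Lemma~\ref{subtrajectory} and Lemma~\ref{CCCC} to exclude longer concatenations. Your overlapping-window argument for paths of four or more segments is a slightly tidier packaging of the paper's explicit enumeration of the strings $SXYZ$, $CXYZ$, $OXYZ$; the only small omission is the three-segment strings containing adjacent loops ($OO$, e.g.\ $COO$ or $OOS$), which are dispatched by the same loop-merging construction you already apply to $OSO$.
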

\begin{proof} 
First note that any path containing more than one loop cannot be optimal. The loops can be brought together without changing the maximum curvature, but a sub-path of type $OO$ is not optimal - it can be expanded into a single loop with lower curvature. 
Consider all paths of type $SXYZ$, where $X,Y,Z$ can each be $C,O, S$ or void, 
\begin{itemize}
    \item $X=C$: $SC$ can be optimal, $SCY$ cannot, by Lemma \ref{fullcircle}.
    \item $X=O$: $SO$ can be optimal and so can $SOS$, but $SOO$ cannot by the argument above, and $SOC$ is excluded by Lemma \ref{OSC}. Similarly, we rule out $SOSO$ and $SOSC$.
    \item A single straight line segment $S$ can be optimal (similarly $C$ and $O$).  
\end{itemize}
By Lemma \ref{subtrajectory} we have exhausted the possibilities for optimal paths beginning with $S$. Next consider paths of type $CXYZ$:
\begin{itemize}
    \item $X=C$: $CC$ and $CCC$ can be optimal, $CCO$ and $CCS$ are not optimal by Lemma \ref{fullcircle}. $CCCC$, $CCCO$ and $CCCS$ are excluded by Lemmas \ref{CCCC}, Corollary \ref{OSC} and Lemma \ref{fullcircle} respectively. 
    \item $X=O$: $CO$ and $COC$ can be optimal, $COS$ is ruled out by Corollary \ref{OSC} along with  $COCC$ and $COCS$.
    \item $X=S$: $CS$ and $CSC$ are possible, $CSO$ and $CSCO$ are not optimal by Corollary \ref{OSC}, and Lemma \ref{fullcircle} rules out $CSCS$ and $CSCC$.
\end{itemize}
Finally, for paths of type $OXYZ$: $OC$ can be optimal but $OCC$ and $OCS$ are not by Lemma \ref{fullcircle} and Corollary \ref{OSC} respectively, $OS$ can be optimal but $OSC$ is not by Lemma \ref{OSC}.

\end{proof}

The boundary conditions that allow for paths of types $SOS$ and $COC$ are quite specific, and so we are able to prove a little more about the conditions under which these paths may be optimal. 

\begin{proposition}\label{SCS}
Consider an $SOS$ path as per Figure \ref{fig:SCS}, and let $d=\Vert z(0)-z(t_f) \Vert$ be the distance between the initial and final points. If $\frac{d}{t_f}>b $ where $b$ is the solution of $\sinc^{-1}(b)(1-b)=\pi/2$, then the path is not optimal. 
\end{proposition}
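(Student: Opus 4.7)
The plan is to construct, under the stated hypothesis, a feasible alternative path of type $CCC$ whose maximum curvature is strictly less than that of the given $SOS$ path; this immediately contradicts optimality of the $SOS$ path.

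First I would unpack the geometry of the $SOS$ trajectory. Because the full loop $O$ closes up geometrically and preserves the tangent direction, the two straight segments share the common direction $v_0 = v_f$ and are collinear with $z_f - z_0$. Hence their combined length is exactly $d$, the loop has circumference $t_f - d$, and the curvature of the $SOS$ path is $a_1 := 2\pi/(t_f - d)$.

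Next I would construct a symmetric $C_\ell C_r C_\ell$ competitor. Placing $z_0$ at the origin with $v_0 = (1,0)$ and assigning all three arcs a common radius $r$, let the first arc turn left through angle $\alpha$, the second turn right through angle $2\alpha$, and the third turn left through angle $\alpha$. The total turning is zero, so the terminal direction is again $v_0$; the total length is $4 r \alpha$; and a direct trigonometric computation shows that the terminal position is $(4 r \sin\alpha, 0)$. Imposing
\[
4 r \sin\alpha = d, \qquad 4 r \alpha = t_f,
\]
and writing $\beta := d/t_f$, one obtains $\alpha = \sinc^{-1}(\beta)$ and $r = t_f/(4\alpha)$. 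Since $\sinc$ is a strictly decreasing bijection of $[0,\pi]$ onto $[0,1]$ and $\beta \in (0,1)$ by Assumption~1, we have $\alpha \in (0,\pi)$; the $CCC$ path is thus well-defined and feasible, with maximum curvature $a_2 := 4 \sinc^{-1}(\beta)/t_f$.

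Finally I would compare the two curvatures: the inequality $a_2 < a_1$ is equivalent, after a trivial rearrangement, to $\sinc^{-1}(\beta)(1-\beta) < \pi/2$. Setting $g(\beta) := \sinc^{-1}(\beta)(1-\beta)$, both factors $\sinc^{-1}(\beta)$ and $1-\beta$ are strictly positive and strictly decreasing on $(0,1)$, so $g$ is strictly decreasing from $g(0^+) = \pi$ to $g(1^-) = 0$ and attains the value $\pi/2$ at the unique point $b$ defined in the proposition. Hence $\beta = d/t_f > b$ yields $g(\beta) < \pi/2$, i.e.\ $a_2 < a_1$, contradicting optimality of the $SOS$ path. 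The only step requiring real attention is the closed-form endpoint calculation for the $C_\ell C_r C_\ell$ concatenation; everything else reduces to monotonicity of a one-variable function.
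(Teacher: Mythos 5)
Your proposal is correct and follows essentially the same route as the paper's proof: construct the symmetric $CCC$ competitor with arc angles $\alpha,2\alpha,\alpha$, obtain $d=4r\sin\alpha$, $t_f=4r\alpha$, hence $\alpha=\sinc^{-1}(d/t_f)$, and reduce the comparison of curvatures (equivalently of the radii $r$ and $R=(t_f-d)/(2\pi)$) to the inequality $\sinc^{-1}(d/t_f)\left(1-d/t_f\right)<\pi/2$, concluded by monotonicity of the left-hand side on $[0,1)$. The only difference is that you verify the endpoint displacement $(4r\sin\alpha,0)$ explicitly, which the paper simply asserts.
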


\begin{proof}

\begin{figure}[ht]
\centering

\tikzset{every picture/.style={line width=0.75pt}} 

\begin{tikzpicture}[x=0.75pt,y=0.75pt,yscale=-1,xscale=1]

\draw [color={rgb, 255:red, 0; green, 0; blue, 0 }  ,draw opacity=1 ][line width=1.5]  [dash pattern={on 5.63pt off 4.5pt}]  (190.5,117) -- (491.5,117) ;
\draw  [color={rgb, 255:red, 0; green, 0; blue, 0 }  ,draw opacity=1 ][dash pattern={on 5.63pt off 4.5pt}][line width=1.5]  (260,72.25) .. controls (260,47.81) and (279.81,28) .. (304.25,28) .. controls (328.69,28) and (348.5,47.81) .. (348.5,72.25) .. controls (348.5,96.69) and (328.69,116.5) .. (304.25,116.5) .. controls (279.81,116.5) and (260,96.69) .. (260,72.25) -- cycle ;
\draw [color={rgb, 255:red, 0; green, 0; blue, 0 }  ,draw opacity=1 ] [dash pattern={on 0.84pt off 2.51pt}]  (304.25,72.25) -- (348.5,72.25) ;
\draw  [draw opacity=0][line width=1.5]  (415.77,168.46) .. controls (427.5,138.53) and (456.52,117.35) .. (490.46,117.35) .. controls (490.49,117.35) and (490.52,117.35) .. (490.55,117.35) -- (490.46,198.01) -- cycle ; \draw  [line width=1.5]  (415.77,168.46) .. controls (427.5,138.53) and (456.52,117.35) .. (490.46,117.35) .. controls (490.49,117.35) and (490.52,117.35) .. (490.55,117.35) ;  
\draw  [draw opacity=0][line width=1.5]  (190.5,117) .. controls (190.58,117) and (190.66,117) .. (190.74,117) .. controls (225.73,117) and (255.5,139.43) .. (266.47,170.71) -- (190.74,197.33) -- cycle ; \draw  [line width=1.5]  (190.5,117) .. controls (190.58,117) and (190.66,117) .. (190.74,117) .. controls (225.73,117) and (255.5,139.43) .. (266.47,170.71) ;  
\draw  [draw opacity=0][line width=1.5]  (415.77,168.46) .. controls (404.67,199.1) and (375.26,221) .. (340.73,221) .. controls (306.86,221) and (277.92,199.93) .. (266.35,170.22) -- (340.73,141.39) -- cycle ; \draw  [line width=1.5]  (415.77,168.46) .. controls (404.67,199.1) and (375.26,221) .. (340.73,221) .. controls (306.86,221) and (277.92,199.93) .. (266.35,170.22) ;  
\draw  [dash pattern={on 0.84pt off 2.51pt}]  (190.74,197.33) -- (190.5,117) ;
\draw [shift={(190.5,117)}, rotate = 269.83] [color={rgb, 255:red, 0; green, 0; blue, 0 }  ][fill={rgb, 255:red, 0; green, 0; blue, 0 }  ][line width=0.75]      (0, 0) circle [x radius= 3.35, y radius= 3.35]   ;
\draw  [dash pattern={on 0.84pt off 2.51pt}]  (190.74,197.33) -- (340.73,141.39) ;
\draw  [dash pattern={on 0.84pt off 2.51pt}]  (490.46,198.01) -- (490.22,117.68) ;
\draw [shift={(490.22,117.68)}, rotate = 269.83] [color={rgb, 255:red, 0; green, 0; blue, 0 }  ][fill={rgb, 255:red, 0; green, 0; blue, 0 }  ][line width=0.75]      (0, 0) circle [x radius= 3.35, y radius= 3.35]   ;
\draw  [dash pattern={on 0.84pt off 2.51pt}]  (340.73,141.39) -- (490.46,198.01) ;
\draw  [draw opacity=0] (190.68,177.8) .. controls (190.7,177.8) and (190.72,177.8) .. (190.74,177.8) .. controls (199.36,177.8) and (206.7,183.2) .. (209.45,190.75) -- (190.74,197.33) -- cycle ; \draw   (190.68,177.8) .. controls (190.7,177.8) and (190.72,177.8) .. (190.74,177.8) .. controls (199.36,177.8) and (206.7,183.2) .. (209.45,190.75) ;  
\draw  [draw opacity=0] (472.2,190.3) .. controls (475.21,183.42) and (482.12,178.59) .. (490.18,178.48) -- (490.46,198.01) -- cycle ; \draw   (472.2,190.3) .. controls (475.21,183.42) and (482.12,178.59) .. (490.18,178.48) ;  
\draw  [draw opacity=0] (359.51,147.77) .. controls (356.83,155.43) and (349.43,160.93) .. (340.73,160.93) .. controls (332.25,160.93) and (325.01,155.7) .. (322.16,148.35) -- (340.73,141.39) -- cycle ; \draw   (359.51,147.77) .. controls (356.83,155.43) and (349.43,160.93) .. (340.73,160.93) .. controls (332.25,160.93) and (325.01,155.7) .. (322.16,148.35) ;  

\draw (183,94.4) node [anchor=north west][inner sep=0.75pt]    {$z_{0}$};
\draw (479,94.4) node [anchor=north west][inner sep=0.75pt]    {$z_{f}$};
\draw (320,51.4) node [anchor=north west][inner sep=0.75pt]    {$R$};
\draw (199,161.4) node [anchor=north west][inner sep=0.75pt]    {$\alpha $};
\draw (469,163.4) node [anchor=north west][inner sep=0.75pt]    {$\alpha $};
\draw (330,165.4) node [anchor=north west][inner sep=0.75pt]    {$2\alpha $};
\draw (171,144.4) node [anchor=north west][inner sep=0.75pt]    {$r$};

\end{tikzpicture}
\caption{Diagram for the proof of Lemma \ref{SCS}: trajectory of type $SCS$, and a trajectory of type $CCC$ with lower maximum curvature.}\label{fig:SCS}
\end{figure}
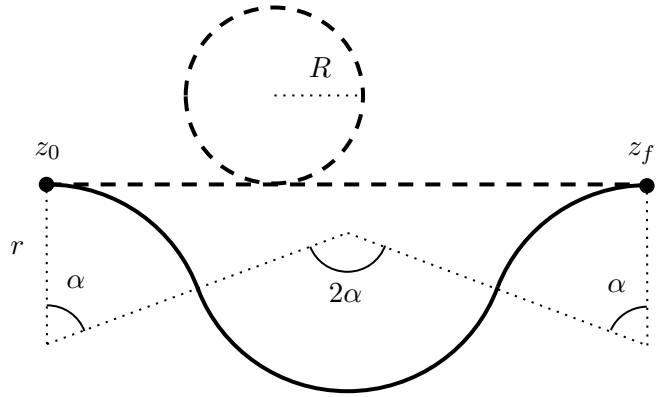
We show that with the assumption on $d/t_f$, the proposed trajectory of type $CCC$ in Figure \ref{fig:SCS} has $r>R$. Note that $d=4r\sin(\alpha)$ (still true if $\alpha$ is obtuse) and $t_f=4r\alpha$ gives $d/t_f=\sinc(\alpha)$. $d/t_f$ is at most $1$ and so $\sinc^{-1}(d/t_f)$ has a solution between $0$ and $\pi$ and  $\alpha=\sinc^{-1}(d/t_f)$. Moreover from $2\pi R+d=t_f$ we then have
\[ R=\frac{t_f-d}{2\pi}=\frac{4r(\alpha-\sin\alpha)}{2\pi} \]
and therefore $r>R$ while $2(\alpha-\sin\alpha)/\pi<1$. Rewriting this inequality in terms of $d/t_f$ gives
\[ \sinc^{-1}\left(\frac{d}{t_f}\right)\left( 1-\frac{d}{t_f}\right)<\frac{\pi}{2} \]
and on the domain $[0,1)$ the left hand side of the inequality is decreasing.

\end{proof}

\begin{remark}  \label{rem:SCS} \rm 
Note that the full circular arc, or the loop, in Figure~\ref{fig:SCS} can be placed anywhere on the straight line $S$ without changing the maximum curvature or the boundary conditions and so the above result also applies to paths of type $OS$ and $SO$.
\endproof
\end{remark}

Note that a path of type $COC$ cannot be optimal if the $C$'s have different orientations. If they do then the loop can be shifted to the terminal point to obtain a path of type $CCO$ with the same maximum curvature, but such a path is not optimal by Lemma \ref{fullcircle}.

\begin{proposition}\label{COC}
Consider a path of type $COC$ and of length $t_f = \beta/a + 2\,\pi$ as in Figure \ref{fig:COC} (solid). Such a path can be optimal only if $\beta \leq \frac{\pi}{2}$. 
\end{proposition}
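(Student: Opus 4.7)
The plan is to adapt the strategy of Proposition \ref{SCS}: assuming $\beta>\pi/2$, I will exhibit a feasible path with the same endpoints $z_0,z_f$, the same boundary tangents $\theta_0,\theta_f$, and the same total length $t_f=\beta/a+2\pi/a$, but with maximum curvature strictly smaller than $a$; this contradicts the supposed optimality of the $COC$ trajectory.

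First I set up the geometry explicitly. By Theorem~\ref{classification} together with the paragraph preceding Proposition~\ref{COC}, both $C$ arcs and the loop $O$ share the common radius $1/a$ and a common orientation, so the $COC$ path is determined up to rigid motion by the single angle parameter $\beta$ (together with $a$). Working in the coordinates suggested by Figure~\ref{fig:COC} and exploiting the symmetry of the configuration, I compute the chord length $d:=\Vert z_0-z_f\Vert$ as a function of $\beta/2$ and $1/a$ (expected form $d=(2/a)\sin(\beta/2)$ for $\beta\in[0,\pi]$), and I record that $\theta_f-\theta_0=\beta$, since the loop contributes no net tangent rotation.

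Next I construct the alternative. The natural candidate is a $CCC$ trajectory with all three arcs of a common radius $r>1/a$ and alternating orientations, chosen so as to match the boundary data $(z_0,\theta_0)$ and $(z_f,\theta_f)$ and to have total length $t_f$. In the spirit of the continuity/intermediate-value argument used in the proof of Lemma~\ref{CCCC}, I establish existence of such a $CCC$ by varying one geometric parameter (for instance, the common outer arc-angle), showing that the resulting length is a continuous function of this parameter and takes values both above and below $t_f$, so that some intermediate choice attains length exactly $t_f$. This delivers a well-defined candidate radius $r=r(\beta,a)$.

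The heart of the argument, and the main anticipated obstacle, is the sharp comparison $r>1/a\iff\beta>\pi/2$. The threshold $\pi/2$ should emerge by substituting the chord relation $d=(2/a)\sin(\beta/2)$ and the length relation $r(\alpha_1+\alpha_2+\alpha_3)=\beta/a+2\pi/a$ into the system of geometric conditions governing the $CCC$, and then reducing to a single trigonometric inequality whose boundary case is $\beta=\pi/2$. Once this inequality is in hand, $r>1/a$ means $1/r<a$, so the alternative $CCC$ has strictly smaller maximum curvature than the original $COC$, which is the desired contradiction. As in Proposition~\ref{SCS}, some care may be needed to handle separately the cases where the $CCC$ arcs cross the chord, but the core logic is unchanged.
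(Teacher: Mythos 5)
Your overall strategy is the right one and matches the paper's in spirit: when $\beta>\pi/2$, exhibit a feasible competitor of type $CCC$ (symmetric, alternating orientation) with strictly smaller maximum curvature. Your setup observations (chord $d=(2/a)\sin(\beta/2)$, net turning $\beta$) are also correct. But the proof as written has a genuine gap at exactly the point you flag as ``the heart of the argument'': you assert that the threshold $\pi/2$ ``should emerge'' from a trigonometric inequality obtained by combining the chord, turning and length relations for a length-matching $CCC$ of unknown radius $r$, and you never derive that inequality. That derivation \emph{is} the proposition; deferring it leaves nothing proved. Your route is also harder than necessary, because you insist on first solving the full $CCC$ boundary-value problem for $r=r(\beta,a)$ and only then comparing $r$ with $1/a$.

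The paper inverts the order and the computation becomes trivial. Keep the radius fixed at $1/a$ (normalize $a=1$) and take the specific $CCC$ whose two outer unit circles are tangent to the original circle at $z_0$ and $z_f$ and whose middle unit circle is tangent to both of those. The four centres form a rhombus of side $2$ with apex angle $\beta$, so the outer arcs each subtend $\alpha=\pi-\beta$ and the middle arc subtends $2\pi-\beta$; hence $L_2=2\alpha+2\pi-\beta=4\pi-3\beta$, while the $COC$ has $L_1=\beta+2\pi$. Then $L_1-L_2=4\beta-2\pi>0$ exactly when $\beta>\pi/2$, so for such $\beta$ the equal-curvature $CCC$ is strictly too short, and one recovers the prescribed length by inflating the radii (adjusting the arc angles), which strictly decreases the curvature --- this is the only place an intermediate-value/monotonicity argument is needed, and it is the analogue of the deformation step you planned. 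If you want to salvage your version, you must actually carry out the reduction you describe; the cleanest way to do so is precisely to observe that the boundary case $r=1/a$ of your one-parameter family is the tangent configuration above, so that the sign of $r-1/a$ is governed by the sign of $L_1-L_2=4\beta-2\pi$.
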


\begin{proof}
 For comparison we consider the $CCC$ path also pictured in Figure \ref{fig:COC} (dashed). Assuming that the circular arcs all have radius 1, the length of the $COC$ path is $L_1=\beta+2\pi$, and the length of the $CCC$ is $L_2=2\alpha+2\pi-\beta=4\pi-3\beta$. Therefore if $\beta>\frac{\pi}{2}$ then  $L_2<L_1$ and then, adjusting the angles as necessary, the radii of each of the circular arcs in the $CCC$ curve can be increased (reducing the maximum curvature) until the curve has the correct length $\beta+2\pi$, and so the original $COC$ is not a minimizer.
\end{proof}
 
\begin{figure}
    \centering
\tikzset{every picture/.style={line width=0.75pt}} 

\begin{tikzpicture}[x=0.75pt,y=0.75pt,yscale=-1,xscale=1]

\draw  [draw opacity=0][line width=1.5]  (426.05,78.85) .. controls (416.53,94.06) and (399.62,104.17) .. (380.34,104.17) .. controls (362.88,104.17) and (347.36,95.87) .. (337.51,83) -- (380.34,50.27) -- cycle ; \draw  [line width=1.5]  (426.05,78.85) .. controls (416.53,94.06) and (399.62,104.17) .. (380.34,104.17) .. controls (362.88,104.17) and (347.36,95.87) .. (337.51,83) ;  
\draw  [dash pattern={on 0.84pt off 2.51pt}] (416.56,110) .. controls (416.56,80.21) and (440.71,56.06) .. (470.5,56.06) .. controls (500.29,56.06) and (524.44,80.21) .. (524.44,110) .. controls (524.44,139.79) and (500.29,163.94) .. (470.5,163.94) .. controls (440.71,163.94) and (416.56,139.79) .. (416.56,110) -- cycle ;
\draw  [dash pattern={on 0.84pt off 2.51pt}]  (380.34,50.27) -- (290.5,110) ;
\draw [shift={(335.42,80.14)}, rotate = 146.38] [color={rgb, 255:red, 0; green, 0; blue, 0 }  ][fill={rgb, 255:red, 0; green, 0; blue, 0 }  ][line width=0.75]      (0, 0) circle [x radius= 3.35, y radius= 3.35]   ;
\draw  [dash pattern={on 0.84pt off 2.51pt}]  (380.34,50.27) -- (470.5,110) ;
\draw [shift={(425.42,80.14)}, rotate = 33.52] [color={rgb, 255:red, 0; green, 0; blue, 0 }  ][fill={rgb, 255:red, 0; green, 0; blue, 0 }  ][line width=0.75]      (0, 0) circle [x radius= 3.35, y radius= 3.35]   ;
\draw [shift={(380.34,50.27)}, rotate = 33.52] [color={rgb, 255:red, 0; green, 0; blue, 0 }  ][fill={rgb, 255:red, 0; green, 0; blue, 0 }  ][line width=0.75]      (0, 0) circle [x radius= 3.35, y radius= 3.35]   ;
\draw  [draw opacity=0] (300.33,103.12) .. controls (301.63,105.08) and (302.4,107.45) .. (302.4,110) .. controls (302.4,112.45) and (301.69,114.74) .. (300.47,116.65) -- (290.5,110) -- cycle ; \draw   (300.33,103.12) .. controls (301.63,105.08) and (302.4,107.45) .. (302.4,110) .. controls (302.4,112.45) and (301.69,114.74) .. (300.47,116.65) ;  
\draw  [draw opacity=0][dash pattern={on 5.63pt off 4.5pt}][line width=1.5]  (425.95,139.65) .. controls (431.45,148.1) and (434.65,158.19) .. (434.65,169.02) .. controls (434.65,198.79) and (410.52,222.92) .. (380.75,222.92) .. controls (350.98,222.92) and (326.85,198.79) .. (326.85,169.02) .. controls (326.85,157.81) and (330.27,147.4) .. (336.12,138.78) -- (380.75,169.02) -- cycle ; \draw  [color={rgb, 255:red, 0; green, 0; blue, 0 }  ,draw opacity=1 ][dash pattern={on 5.63pt off 4.5pt}][line width=1.5]  (425.95,139.65) .. controls (431.45,148.1) and (434.65,158.19) .. (434.65,169.02) .. controls (434.65,198.79) and (410.52,222.92) .. (380.75,222.92) .. controls (350.98,222.92) and (326.85,198.79) .. (326.85,169.02) .. controls (326.85,157.81) and (330.27,147.4) .. (336.12,138.78) ;  
\draw  [line width=1.5]  (326.41,158.15) .. controls (326.41,128.36) and (350.55,104.21) .. (380.34,104.21) .. controls (410.13,104.21) and (434.28,128.36) .. (434.28,158.15) .. controls (434.28,187.94) and (410.13,212.09) .. (380.34,212.09) .. controls (350.55,212.09) and (326.41,187.94) .. (326.41,158.15) -- cycle ;
\draw  [draw opacity=0][dash pattern={on 5.63pt off 4.5pt}][line width=1.5]  (333.51,77.52) .. controls (340.35,86.55) and (344.4,97.8) .. (344.4,110) .. controls (344.4,121.35) and (340.89,131.89) .. (334.89,140.58) -- (290.5,110) -- cycle ; \draw  [dash pattern={on 5.63pt off 4.5pt}][line width=1.5]  (333.51,77.52) .. controls (340.35,86.55) and (344.4,97.8) .. (344.4,110) .. controls (344.4,121.35) and (340.89,131.89) .. (334.89,140.58) ;  
\draw  [draw opacity=0][dash pattern={on 5.63pt off 4.5pt}][line width=1.5]  (428.51,143.8) .. controls (421.06,134.55) and (416.6,122.8) .. (416.6,110) .. controls (416.6,98.44) and (420.24,87.73) .. (426.44,78.95) -- (470.5,110) -- cycle ; \draw  [dash pattern={on 5.63pt off 4.5pt}][line width=1.5]  (428.51,143.8) .. controls (421.06,134.55) and (416.6,122.8) .. (416.6,110) .. controls (416.6,98.44) and (420.24,87.73) .. (426.44,78.95) ;  
\draw  [draw opacity=0] (391.19,57.73) .. controls (388.85,61.31) and (384.86,63.66) .. (380.34,63.66) .. controls (376.13,63.66) and (372.39,61.62) .. (370,58.45) -- (380.34,50.27) -- cycle ; \draw   (391.19,57.73) .. controls (388.85,61.31) and (384.86,63.66) .. (380.34,63.66) .. controls (376.13,63.66) and (372.39,61.62) .. (370,58.45) ;  
\draw  [dash pattern={on 0.84pt off 2.51pt}] (236.56,110) .. controls (236.56,80.21) and (260.71,56.06) .. (290.5,56.06) .. controls (320.29,56.06) and (344.44,80.21) .. (344.44,110) .. controls (344.44,139.79) and (320.29,163.94) .. (290.5,163.94) .. controls (260.71,163.94) and (236.56,139.79) .. (236.56,110) -- cycle ;
\draw  [dash pattern={on 0.84pt off 2.51pt}]  (470.5,110) -- (380.75,169.02) ;
\draw  [dash pattern={on 0.84pt off 2.51pt}]  (290.5,110) -- (380.75,169.02) ;

\draw (329.72,60.7) node [anchor=north west][inner sep=0.75pt]    {$z_{0}$};
\draw (418.43,60) node [anchor=north west][inner sep=0.75pt]    {$z_{f}$};
\draw (306.71,105.68) node [anchor=north west][inner sep=0.75pt]    {$\alpha $};
\draw (374.3,66.69) node [anchor=north west][inner sep=0.75pt]    {$\beta $};
\end{tikzpicture}

    \caption{Diagram for Proposition \ref{COC}: A path of type $COC$ (solid) and shorter curve of type $CCC$ (dashed) with the same maximum curvature.}
    \label{fig:COC}
\end{figure}
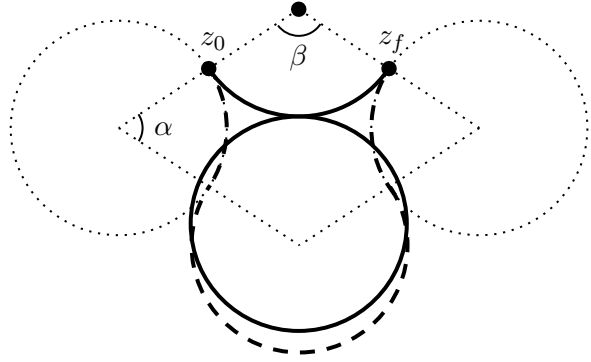

\begin{remark}  \label{rem:COC} \rm
Note that the full circular arc, or the loop, in Figure~\ref{fig:COC} can be placed anywhere on the circular arc $C$ between $z_0$ and $z_f$ without changing the maximum curvature or the boundary conditions and so the above result also applies to paths of type $OC$ and $CO$.
\endproof
\end{remark}

The numerical experiments in Section \ref{sec:numexp}, specifically Examples 1 and 2, suggest that the converses to Propositions \ref{SCS} and \ref{COC} are true. It may be possible to verify this by direct comparison with each of the potential minimizers allowed by Theorem \ref{classification} which satisfy the constraints (at least in the $SOS$ case there do not seem to be too many of these), but we will not attempt to do so here.

\subsection{Proof of Proposition~\ref{existence}} \label{subsec:proof}

\begin{proof}
    The proof is sketched in \cite{gallagher2023elastica} but for the convenience of the reader we fill out the details. For any $p\in [1,\infty)$, consider the $p$-elastic energy $\int_0^{t_f}\Vert \ddot z(t)\Vert^p dt$ for $z$ in the Sobolev space $W^{2,p}((0,t_f),\mathbb R^2)$ and satisfying the constraints of Problem (P). The existence of minimizers of the $p$-elastic energy subject to Dirichlet boundary conditions is proved using the direct method in \cite[Proposition 4.1]{MiuYos2022}, and the same proof works for the boundary conditions we have here. We may therefore consider a sequence $(z_p)$  where each $z_p$ is a minimizer of the $p$-elastic energy subject to the constraints of problem (P), and let $z_*$ be \emph{any} element of $W^{2,\infty}((0,t_f),\mathbb R^2)$ satisfying the constraints. Then for any $p\geq q$, using the minimality of $z_q$ and $z_p$ and the embedding $L^p\subset L^q$ we have
    \begin{equation}\label{zestimates}
{\Vert \ddot z_q\Vert}_{L^q} \leq {\Vert \ddot z_p\Vert}_{L^q} \leq c {\Vert \ddot z_p\Vert}_{L^p}\leq c{\Vert \ddot z_* \Vert}_{L^p}\leq {t_f}^{\tfrac{1}{q}}{\Vert \ddot z_* \Vert}_{L^\infty}
    \end{equation}
    where $c={t_f}^{\tfrac{1}{q}-\tfrac{1}{p}}$. Note also that for any $z\in W^{2,q}((0,t_f),\mathbb R^2)$ satisfying the constraints of (P), since $\Vert \dot z\Vert =1$ we have ${\Vert \dot z\Vert}_{L^q}={t_f}^{\frac{1}{q}}$, and if $q\geq 2$ then, using the fundamental theorem of calculus, Cauchy-Schwarz' inequality and Young's inequality with $\varepsilon$:
    \begin{align*}
        \Vert z(t)\Vert^2=\Vert z(0)\Vert^2+\int_0^{t}\frac{d}{d\tau}\Vert z(\tau)\Vert^2 d\tau=\Vert z(0)\Vert^2+\int_0^{t}\langle z(\tau),\dot z(\tau) \rangle d\tau \leq \Vert z(0)\Vert^2+\frac{1}{2\varepsilon}\Vert z\Vert^2_{L^2}+\frac{t\varepsilon}{2}.
    \end{align*}
    Rearranging and integrating over $(0,t_f)$ gives
    \begin{align*}
        \Vert z\Vert^2_{L^2}-\frac{t_f}{2\varepsilon}\Vert z\Vert^2_{L^2}
        &\leq t_f \Vert z(0)\Vert^2+t_f^2\varepsilon,
    \end{align*}
    and then setting $\varepsilon=t_f$ gives 
    \[
   \Vert z\Vert _{L^q} \leq c  \Vert z\Vert^2_{L^2}\leq 2c (t_f \Vert z(0)\Vert^2+t_f^3).
    \]
    for $q\geq 2$. Combining these lower order estimates with \eqref{zestimates}, we have that for each $2\leq q<\infty$ the subsequence $(z_p)_{p\geq q}$ is bounded in $W^{2,q}((0,t_f),\mathbb R^2)$, and therefore has a subsequence which converges weakly in $W^{2,q}((0,t_f),\mathbb R^2)$ (by the local weak compactness of reflexive Banach spaces, \cite[p.~126]{yosida}) and strongly in $C^1([0,t_f],\mathbb R^2)$ (by compactness of the Sobolev embedding, \cite[Theorem~6.3]{adams}). By induction we can construct successive subsequences $(z_{p_2(i)})(z_{p_3(i)}),\ldots (z_{p_k(i)}),\ldots $, such that $(z_{p_{k+1}(i)})$ is a subsequence of $(z_{p_k(i)})$ which converges weakly in $W^{2,k+1}$. Then the diagonal sequence $(z_{\tilde p(i)})$ where $\tilde p(i)=p_i(i)$ converges weakly in $W^{2,k}$ for every $k\geq 2$. Indeed, since weak convergence in $W^{2,k}$ implies weak convergence in $W^{2,q}$ for all $q\leq k$ by the  embedding $W^{2,q}\subset W^{2,k}$, we have that $z_{\tilde p(i)}$ converges weakly in $W^{2,q}$ (and strongly in $C^1$) for \emph{every} $q\geq 2$ to  $z_\infty \in \bigcap_{q\geq 2} W^{2,q}((0,t_f),\mathbb R^2)$. By the strong $C^1$ convergence, $z_\infty$ satisfies the constraints. Since $\Vert \ddot z_\infty \Vert_{L^q}$  is uniformly bounded for all $q$, we have $z_\infty \in W^{2,\infty}((0,t_f),\mathbb R^2)$ and $\lim_{p\to \infty}{\Vert \ddot z_\infty \Vert}_{L^p}={\Vert \ddot z_\infty \Vert}_{L^\infty}$ by \cite[Theorem 2.14]{adams}, and also $\lim_{p\to \infty}{\Vert \ddot z_p \Vert}_{L^p}={\Vert \ddot z_\infty \Vert}_{L^\infty}$ . Finally, $z_\infty$ is a minimizer of maximum curvature, because if not then there exists $z$ and $\varepsilon>0$ such that ${\Vert \ddot z\Vert}_{L^\infty}={\Vert \ddot z_\infty \Vert}_{L^\infty}-\varepsilon$, and then choosing $p$ sufficiently large:
    \[
{t_f}^{-\frac{1}{p}}{\Vert \ddot z_p \Vert}_{L^p}>{\Vert \ddot z_\infty \Vert}_{L^\infty}-\tfrac{\varepsilon}{2}>{\Vert \ddot z \Vert}_{L^\infty}\geq {t_f}^{-\frac{1}{p}}{\Vert \ddot z \Vert}_{L^p}
    \]
    which contradicts the assumption that $z_p$ is a minimizer of the $p$-elastic energy. 
\end{proof}

\section{A Numerical Method}
\label{num_meth}

In this section, we provide a numerical method utilizing {\em arc}, or {\em switching time}, {\em parametrization techniques} earlier used for optimal control problems exhibiting discontinuous controls in~\cite{KayNoa1996, KayNoa2003, KayLucSim2004, MauBueKimKay2005}.
Theorem~\ref{classification} asserts that a curve of minimax curvature can be of type (described by the strings) $CCC$, or $COC$, or $CSC$, or $SOS$, or a substring thereof, and nothing else. Then the problem of finding such a curve can be reduced to finding a correct concatenation/configuration of circular arcs ($C$), straight lines ($S$) and a loop ($O$) as listed, as well as finding the length of each arc involved.  Recall that a loop is nothing but a circular arc with length $2\,\pi/a$.  Therefore, for computational purposes, we will treat a loop as any other circular arc $C$ with an unknown length.  We also note that if there are more than one circular arc in a string then they must have the same curvature $a$, i.e., the same radius.

A circular arc $C$ can either be a left-turn or a right-turn arc, which we denote by $L$ and $R$, respectively.  Then a curve of type $CCC$ can be represented by the string $LRLR$.  Clearly, at least one of the arcs in $LRLR$ must be of zero length in order to represent $CCC$ or a substring thereof, and this should be determined by the numerical method to be proposed. A curve of type $CSC$ can similarly be represented by $LRSLR$, again with at least one of the arcs of type $L$ or $R$ being of zero length.  Note that if the length of the straight line (S) is zero, then $LRSLR$ effectively reduces to $LRLR$ representing $CCC$.  Therefore, the string $LRSLR$ is general enough to represent both of the types $CCC$ and $CSC$.  The type $SCS$ (or equivalently $SOS$), on the other hand, can be represented by the string $SLRS$.  Recall that, by Remark~\ref{rem:SCS}, any solution of type $SOS$ can be obtained from a solution of type $SO$ or $OS$, simply by placing the loop anywhere on the straight line segment between the endpoints.  Therefore either the string $SLR$ or the string $LRS$ will suffice to represent $SCS$.  As a result, one would cover all the solution types in Theorem~\ref{classification} by using the string $LRSLR$.

Let $L_{\xi_1}$ denote a left-turn arc with length $\xi_1$, $R_{\xi_2}$ a right-turn arc with length $\xi_2$ and $S_{\xi_3}$ a straight line with length $\xi_3$.  Similarly, let $L_{\xi_4}$ denote a left-turn arc with length $\xi_4$ and $R_{\xi_5}$ a right-turn arc with length $\xi_5$.  Then the types of solution curves described in Theorem~\ref{classification} can all be represented by the string
\[
L_{\xi_1}R_{\xi_2}S_{\xi_3}L_{\xi_4}R_{\xi_5}\,.
\]
For example, the type $RLR$ is given with $\xi_1=\xi_3=0$ and $\xi_2,\xi_4,\xi_5 > 0$. On the other hand, the type $LR$ is obtained either with $\xi_3=\xi_4=\xi_5=0$ and $\xi_1,\xi_2 > 0$ or with $\xi_1=\xi_2=\xi_3=0$ and $\xi_4,\xi_5 > 0$.

Let the initial time $t_0 := 0$ and the terminal time $t_5 := t_f$.  Also define the {\em switching times} $t_j$, $j = 1,\ldots 4$, such that
\begin{equation}  \label{arc_durations}
\xi_j := t_j - t_{j-1}\,,\quad\mbox{for } j = 1,\ldots,5\,.
\end{equation}
The lengths $\xi_j$ may also be referred to as {\em arc durations}.  Note that along an arc of type $L$ or $R$ the optimal control is bang--bang; in particular, $v(t) \equiv 1$ along an arc of type $L$ and $v(t) \equiv -1$ along an arc of type $R$.  Along an arc of type $S$, on the other hand, optimal control is singular; in particular, $v(t) \equiv 0$, by Lemma~\ref{singular}.  Since $v(t)$ is constant ($1$, $-1$ or 0) for $t_{j-1} \le t < t_j$ (along the $j$th arc), the ODEs in Problem~(OC) with $\alpha(t) = a$, or equivalently Problem~(Pc) with $u(t) = a\,v(t)$, can be solved as follows.
\begin{eqnarray}
&& \theta(t) = \theta(t_{j-1}) + a\,v(t)\,(t - t_{j-1})\,,\quad\mbox{ if } j = 1,\ldots,5\,, \label{theta_arc} \\[2mm]
&& x(t) = \left\{\begin{array}{ll}
x(t_{j-1}) + \big(\sin\theta(t) - \sin\theta(t_{j-1})\big) / \big(a\,v(t)\big)\,, & \mbox{ if } j = 1,2,4,5\,, \\[1mm]
x(t_{j-1}) + \cos\theta(t)\,(t - t_{j-1})\,, & \mbox{ if } j = 3\,,
\end{array}\right. \label{x_arc} \\[2mm]
&& y(t) = \left\{\begin{array}{ll}
y(t_{j-1}) - \big(\cos\theta(t) - \cos\theta(t_{j-1})\big) / \big(a\,v(t)\big)\,, & \mbox{ if } j = 1,2,4,5\,, \\[1mm]
y(t_{j-1}) + \sin\theta(t)\,(t - t_{j-1})\,, & \mbox{ if } j = 3\,, \label{y_arc}
\end{array}\right.
\end{eqnarray}
where
\begin{equation}  \label{arc_control}
v(t) = \left\{\begin{array}{rl}
1\,, & \mbox{ if } j = 1,4\,, \\[1mm]
-1\,, & \mbox{ if } j = 2,5\,, \\[1mm]
0\,, & \mbox{ if } j = 3\,,
\end{array}\right.
\end{equation}
for $t_{j-1} \le t < t_j$.  We have used the solution for $\theta(t)$ in \eqref{theta_arc} in obtaining the solutions for $x(t)$ and $y(t)$ in \eqref{x_arc}--\eqref{y_arc}.  Note that the control variable $v(t)$ is a piecewise constant function, which takes here the sequence of values $\{1, -1, 0, 1, -1\}$.  It should be clear from the context that the control variable $u(t)$ defined in Problem~(Pc) takes the sequence of values $\{a, -a, 0, a, -a\}$.  After evaluating the state variables in \eqref{theta_arc}--\eqref{y_arc} at the switching times and carrying out algebraic manipulations, one can equivalently rewrite Problem~(Pc), or Problem~(OC), as follows.
\[
\mbox{(Ps)}\left\{\begin{array}{rl}
\min &\ a
   \\[4mm]
\mbox{s.t.} &\ds\ x_0 - x_f + \frac{1}{a}\big(-\!\sin\theta_0 + 2\,\sin\theta_1 - 2\,\sin\theta_2 + 2\,\sin\theta_4 - \sin\theta_f \big) + \xi_3\,\cos\theta_2 = 0\,, \\[3mm] 
  &\ds\ y_0 - y_f + \frac{1}{a}\big(\cos\theta_0 - 2\,\cos\theta_1 + 2\,\cos\theta_2 - 2\,\cos\theta_4 + \cos\theta_f \big) + \xi_3\,\sin\theta_2 = 0\,, \\[3mm]
  &\ds\ \sin\theta_f = \sin\theta_5\,,\ \ \cos\theta_f = \cos\theta_5\,,  \\[2mm]
  & \ds t_f = \sum_{j=1}^5 \,\xi_j\,,\qquad \xi_j \ge 0\,,\ \mbox{ for } j = 1,\ldots,5\,,
\end{array}\right.
\]
where
\begin{equation}  \label{thetas}
\theta_1 = \theta_0 + a\,\xi_1\,,\qquad
\theta_2 = \theta_1 - a\,\xi_2\,,\qquad
\theta_4 = \theta_2 + a\,\xi_4\,,\qquad
\theta_5 = \theta_4 - a\,\xi_5\,.
\end{equation}
Substitution of $\theta_1$, $\theta_2$, $\theta_4$ and $\theta_5$ in \eqref{thetas} into Problem~(Ps) yields a finite-dimensional nonlinear optimization problem in just six variables, $\xi_j$,  $j = 1,\ldots,5$, and $a$.

\begin{remark}  \label{rem:angle}  \rm
With the constraints $\sin\theta_f = \sin\theta_5$ and $\cos\theta_f = \cos\theta_5$, we make sure that we satisfy the slope condition at the terminal point.  Otherwise, the constraint $\theta_f = \theta_5$ is stronger, and imposing it might result in missing some of the feasible solutions.
\endproof
\end{remark}

\begin{remark}[Optimization Software for (Ps)]  \label{rem:global}  \rm
Problem~(Ps) can be solved by standard optimization methods and software, for example, Algencan~\cite{Andreani2007,BirMar2014}, which implements augmented Lagrangian techniques, or Ipopt~\cite{WacBie2006}, which implements an interior point method, or SNOPT~\cite{GilMurSau2005}, which implements a sequential quadratic programming algorithm, or Knitro~\cite{Knitro}, which implements various interior point and active set algorithms to choose from.  For general nonconvex optimization problems like Problem~(Ps), what one can hope for, by using these software, is to get (at best) a locally optimal solution.  
\endproof
\end{remark}

\begin{remark}[Search for a Global Minimizer]  \label{rem:combinatorial}  \rm
We recall that the type of a solution curve can be captured by the five-letter-long string $L_{\xi_1}R_{\xi_2}S_{\xi_3}L_{\xi_4}R_{\xi_5}$, with at most three of the arc lengths being nonzero.  For instance, the type $RLR$ is defined by $\xi_1=\xi_3=0$ and $\xi_2,\xi_4,\xi_5 > 0$, and $LSR$ by $\xi_2=\xi_4=0$ and $\xi_1,\xi_3,\xi_5 > 0$.  This prompts one to restrict all possible solution types to strings with at most three letters and carry out a combinatorial search for an optimal solution.  This requires a more extensive coding for solving Problem~(Ps), and therefore we leave it outside the scope of the current paper.

Even a combinatorial approach as alluded above does not offer a certificate for a global minimizer, since a given (globally minimizing) type may result in more than just one critical solution, as will be demonstrated by Example~2 in the next section:  Figure~\ref{fig:COC_ex}(d) illustrates that type $RLR$ results in two critical solutions ($a \approx 0.503$ and $a \approx 0.798$), one of which ($a \approx 0.503$) is identified as the global minimizer, only after numerical experimentation.
\endproof
\end{remark}

\subsection{Numerical experiments}
\label{sec:numexp}

For computations numerically solving Problem~(Ps), or equivalently Problem~(P), we have employed the AMPL--Knitro computational suite: AMPL is an optimization modelling language~\cite{AMPL} and Knitro is a popular optimization software~\cite{Knitro} (Version 3.0.1 is used here).  In all runs, we have used the Knitro options {\tt `alg=0 feastol=1e-12 opttol=1e-12'} in AMPL; in other words, we have set both the feasibility and optimality tolerances at $10^{-12}$, and the particular algorithm to be employed was left to be chosen by Knitro itself.

In what follows, we present example instances of problems for which we solve Problem~(Ps) by using the AMPL--Knitro suite and display the solution curves.  The best numerical solution we have identified for an instance is depicted by a solid curve in the graphs.  The other solutions reported by the suite as ``locally optimal'' are depicted by dotted curves,  and referred to here as ``critical'', as they satisfy the necessary conditions of optimality furnished by the maximum principle.

\subsubsection{Example 1}
\label{ex:SCS}

Recall from Proposition~\ref{SCS} that $b$ is the solution of the equation $\sinc^{-1}(b)(1-b)=\pi/2$.  After rearranging this equation one gets $f(b) := b - \sinc\big(\pi/(2\,(1 - b))\big) = 0$.  A numerical solution to $f(b) = 0$ can be obtained as $b \approx 0.319966693534110$  (correct to 15 dp) in seven iterations by using the secant method starting with initial points $0.1$ and $0.2$.  The uniqueness of this solution can be established by observing that the graph of $f$ crosses the $b$-axis only once (not shown here). 

Figure~\ref{fig:SCS_ex} shows curves of minimax curvature between the oriented points $(x_0,y_0,\theta_0) = (0,0,0)$ and $(x_f,y_f,\theta_f) = (0,1,0)$, which is a similar situation to that in the diagram in Figure~\ref{fig:SCS}.  The orientation of the endpoints of the curves in Figure~\ref{fig:SCS_ex} are depicted by (black) arrows.  We note that, for the two given endpoints, $d = 1$.

\begin{figure}[t!]
\begin{minipage}{80mm}
\begin{center}
\psfrag{x}{\small $x$}  \psfrag{y}{\small $y$}
\includegraphics[width=90mm]{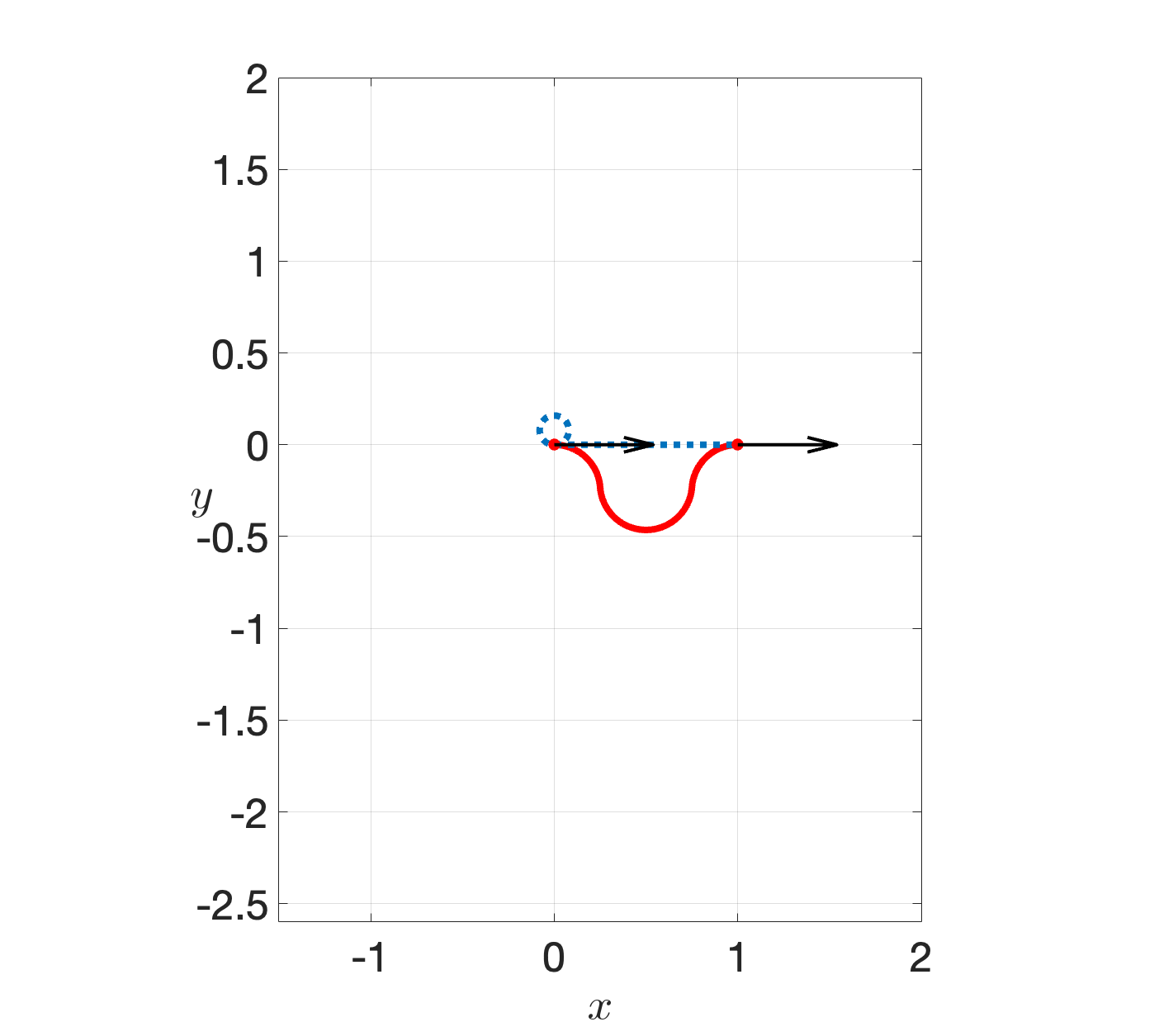} \\[0mm]
{\sf\small\hspace{7mm} (a) $t_f = 1.5$: Solid curve RLR (red) with $a \approx 3.989$; dotted curve LS (blue) with $a = 4\,\pi$.}
\end{center}
\end{minipage}
\begin{minipage}{80mm}
\begin{center}
\psfrag{x}{\small $x$}  \psfrag{y}{\small $y$}
\includegraphics[width=90mm]{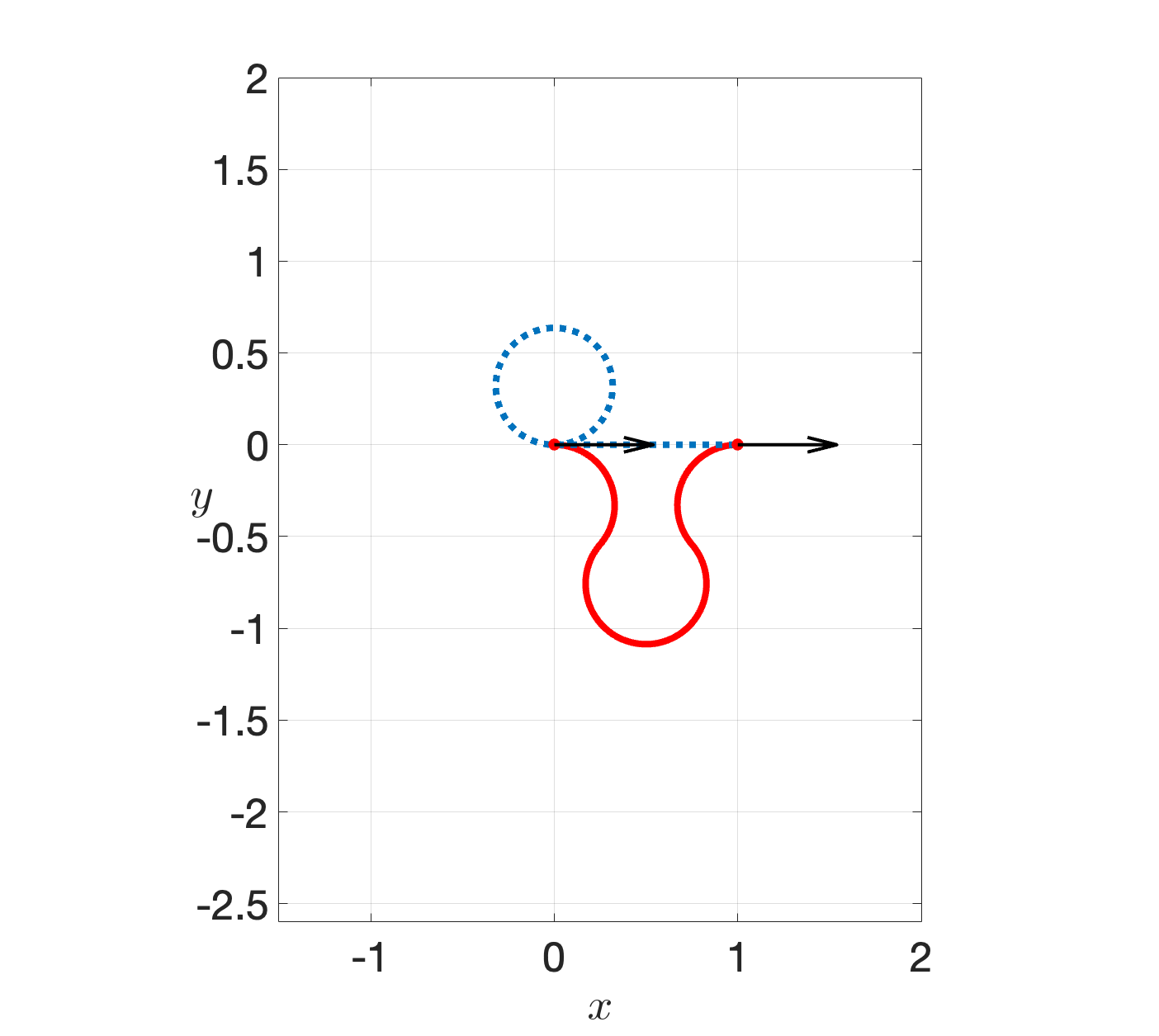} \\[0mm]
{\sf\small\hspace{7mm} (b) $t_f = 3$: Solid curve RLR (red) with $a \approx 3.038$; dotted curve LS (blue) with $a = \pi$.}
\end{center}
\end{minipage}
\\[0mm]
\begin{minipage}{80mm}
\begin{center}
\psfrag{x}{\small $x$}  \psfrag{y}{\small $y$}
\includegraphics[width=90mm]{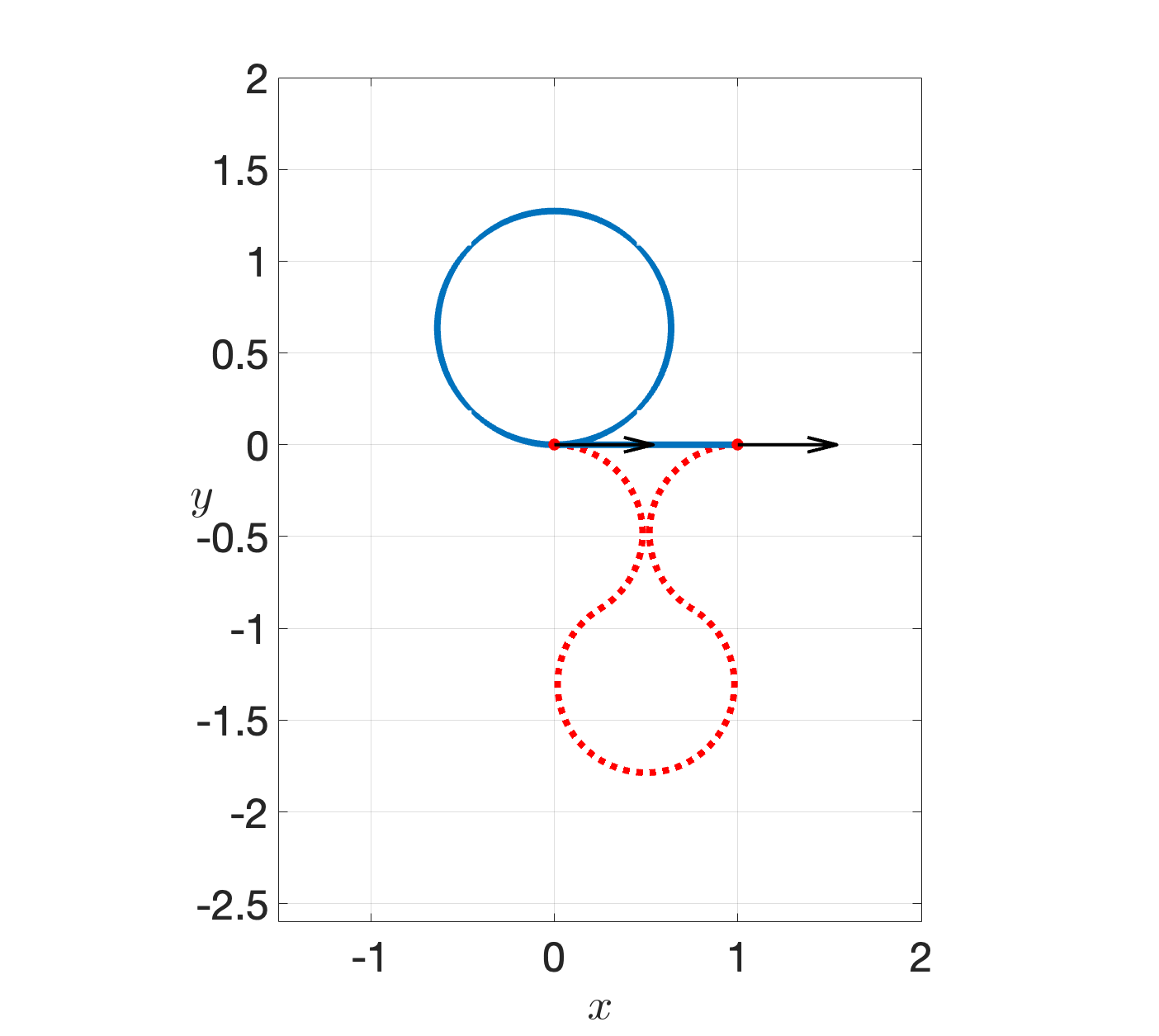} \\[0mm]
{\sf\small\hspace{7mm} (c) $t_f = 5$: Solid curve LS (blue) with $a = \pi/2$; dotted curve RLR (red) with $a \approx 2.077$.}
\end{center}
\end{minipage}
\begin{minipage}{80mm}
\begin{center}
\psfrag{x}{\small $x$}  \psfrag{y}{\small $y$}
\includegraphics[width=90mm]{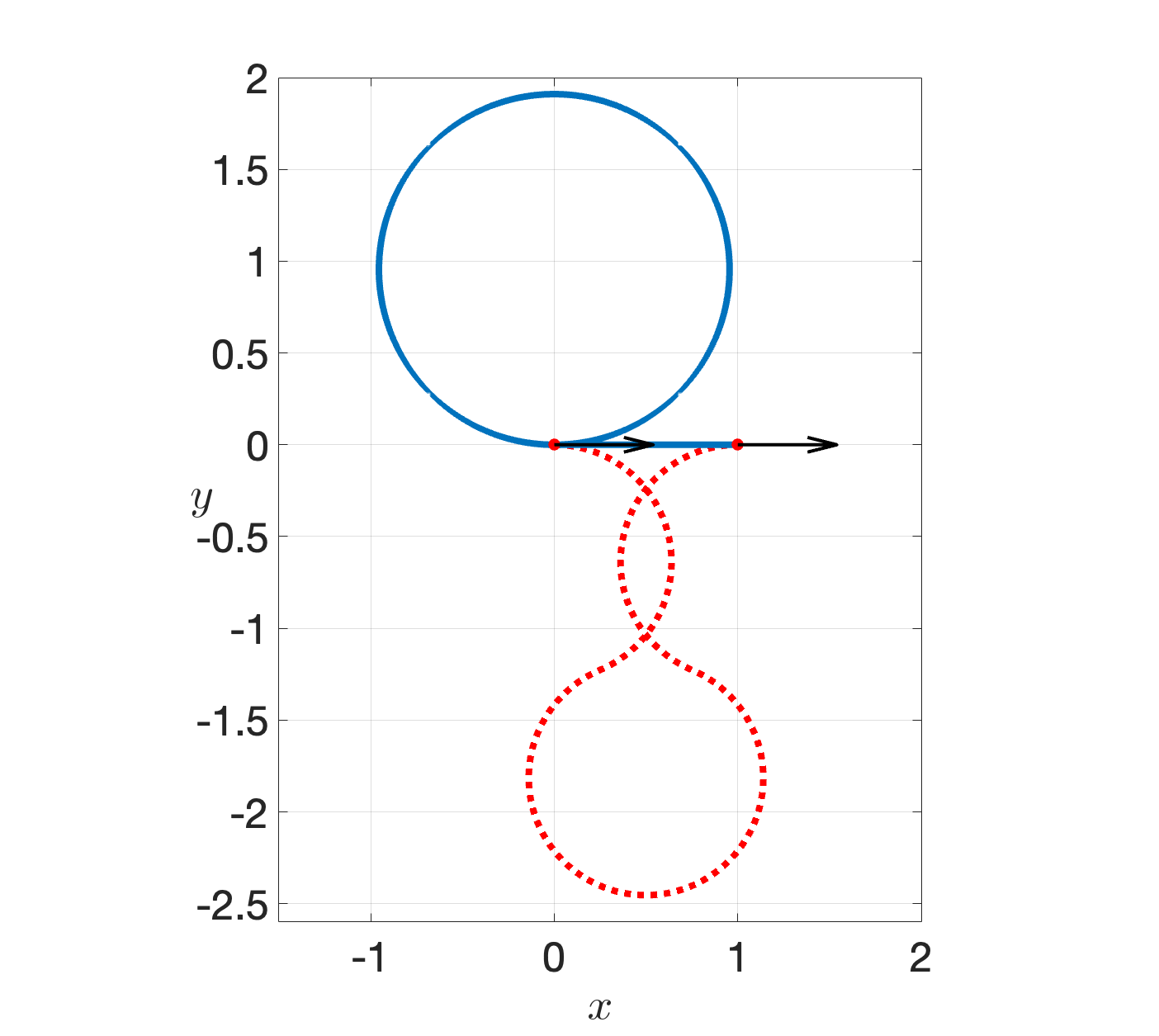} \\[0mm]
{\sf\small\hspace{7mm} (d) $t_f = 7$: Solid curve LS (blue) with $a = \pi/3$; dotted curve RLR (red) with $a \approx 1.565$.} 
\end{center}
\end{minipage}
\
\caption{\sf Example~1---Problem instances for Proposition~\ref{SCS} and its converse: (a)--(b) for $d / t_f > b$ ($t_f \le 3.12532$) and (c)--(d) for $d / t_f \le b$ ($t_f \ge 3.12533$)---Curves of minimax curvature between the oriented points $(0,0,0)$ and $(0,1,0)$, for various $t_f$.  The solid curves are optimal (identified by numerical experiments) while the dotted ones are only critical.}
\label{fig:SCS_ex}
\end{figure}

Proposition~\ref{SCS} asserts that the $SOS$ type of trajectory is not optimal (nor $OS$ or $SO$, by Remark~\ref{rem:SCS}) for $t_f < 1/b$, i.e., $t_f \le 3.12532$ (correct to 5 dp).  This is exemplified in Figures~\ref{fig:SCS_ex}(a) and \ref{fig:SCS_ex}(b) respectively for $t_f = 1.5$ and $t_f = 3$, where it is numerically verified that the CCC concatenation of curves is optimal:  For $t_f = 1.5$, the solution curve has $a = 3.9887508486$ (correct to 10 dp) and is of type $R_{\xi_2}L_{\xi_4}R_{\xi_5}$ (with $\xi_1 = \xi_3 = 0$), where the arc lengths $\xi_2 = 0.375$, $\xi_4 = 0.75$, and $\xi_5 = 0.375$.  For $t_f = 3$, $a = 3.0384835468$ (correct to 10 dp) and the solution is of the same type, with $\xi_2 = 0.75$, $\xi_4 = 1.5$, and $\xi_5 = 0.75$.  We point to the symmetry in the arc lengths in both cases, presumably because of the colinear configuration of the endpoints.

The converse of Proposition~\ref{SCS} for this example instance (which is not proved) is that the $SOS$ (or $OS$ or $SO$, by Remark~\ref{rem:SCS}) concatenation is optimal for $t_f \ge 1/b$, i.e., $t_f \ge 3.12533$ (correct to 5 dp).  This converse is exemplified in Figures~\ref{fig:SCS_ex}(c) and \ref{fig:SCS_ex}(d) respectively for $t_f = 5$ and $t_f = 7$.  For $t_f = 5$, the solution curve has $a = \pi/2$ and is of type $L_{\xi_1}S_{\xi_3}$ (with $\xi_2 = \xi_4 = \xi_5 = 0$), where $\xi_1 = 4$, and $\xi_3 = 1$.  For $t_f = 5$, $a = \pi/3$ and the solution is of the same type, with $\xi_1 = 6$, and $\xi_3 = 1$.

Each of the solution curves presented in Figure~\ref{fig:SCS_ex} can be ``flipped over'' or ``reflected about'' the $x$-axis to get ``symmetric'' solution curves: For example when an $LS$ curve is reflected one gets an $RS$ curve which has the same length, oriented endpoints and maximum curvature. An $RLR$ curve can also be reflected in the same way to get an $LRL$ curve which has the same length, oriented endpoints and maximum curvature.  These reflected curves have also been obtained as computational solutions to Problem~(Ps).

\subsubsection{Example 2}
\label{ex:COC}

Suppose that the endpoints are placed on a unit circle with the end velocities chosen tangent to the circle. Recall from Proposition~\ref{COC} that a curve of minimax curvature between these endpoints can be of type $COC$ (or equivalently type $OC$ or type $CO$, as the loop $O$ can be shifted anywhere on a circular arc $C$) only in the case when the endpoints are separated by at most $\pi/2$ radians, i.e., they are separated by an angle $\beta \le \pi/2$ (see Figure~\ref{fig:COC}), and $t_f$ is chosen as $\beta + 2\,\pi$.  Here we numerically illustrate Proposition~\ref{COC} and explore instances with various other values of the separation angle $\beta$ and with $t_f = \beta + 2\,\pi$---see Figure~\ref{fig:COC_ex}(a)--(d).

We place the pair of oriented endpoints on a unit circle, and impose $t_f = \beta + 2\,\pi$ (so that a loop can be observed at least in a critical curve), in each part of Figure~\ref{fig:COC_ex}, and obtain optimal solutions as follows.
\begin{itemize}
\item[(a)] $\beta = \pi/3 < \pi/2$, i.e., $t_f = 7\,\pi/3$: $(-1/2, -\sqrt{3}/2, -\pi/6)$ and $(1/2, -\sqrt{3}/2, \pi/6)$.  The optimal curve is of type $L_{\xi_4}R_{\xi_5}$ (with $\xi_1 = \xi_2 = \xi_3 = 0$), where $a = 1$, $\xi_4 = \pi/3$, and $\xi_5 = 2\,\pi$.
\item[(b)] $\beta = 2\,\pi/3$, i.e., $t_f = 8\,\pi/3$: $(-\sqrt{3}/2, -1/2, -\pi/3)$ and $(\sqrt{3}/2, -1/2, \pi/3)$.  The optimal curve is of type $R_{\xi_2}L_{\xi_4}R_{\xi_5}$ (with $\xi_1 = \xi_3 = 0$), where $a = 0.8174619979$, $\xi_2 = 1.4538775285$, $\xi_4 = 5.4698253525$, and $\xi_5 = 1.4538775285$, all correct to 10~dp.
\item[(c)] $\beta = 4\,\pi/3$, i.e., $t_f = 10\,\pi/3$: $(-\sqrt{3}/2, 1/2, -2\,\pi/3)$ and $(\sqrt{3}/2, 1/2, 2\,\pi/3)$.  The optimal curve is of type $R_{\xi_2}L_{\xi_4}R_{\xi_5}$, where $a = 0.5245840019$, $\xi_2 = 0.6217500975$, $\xi_4 = 9.2284753170$, and $\xi_5 = 0.6217500975$, all correct to 10~dp.
\item[(d)] $\beta = 5\,\pi/3$, i.e., $t_f = 11\,\pi/3$: $(-1/2, \sqrt{3}/2, -5\,\pi/6)$ and $(1/2, \sqrt{3}/2, 5\,\pi/6)$.  The optimal curve is of type $R_{\xi_2}L_{\xi_4}R_{\xi_5}$, where $a = 0.5026360614$, $\xi_2 = 0.2755293870$, $\xi_4 = 10.9681142892$, and $\xi_5 = 0.2755293870$, all correct to 10~dp.
\end{itemize}

\begin{figure}[t!]
\begin{minipage}{80mm}
\begin{center}
\includegraphics[width=90mm]{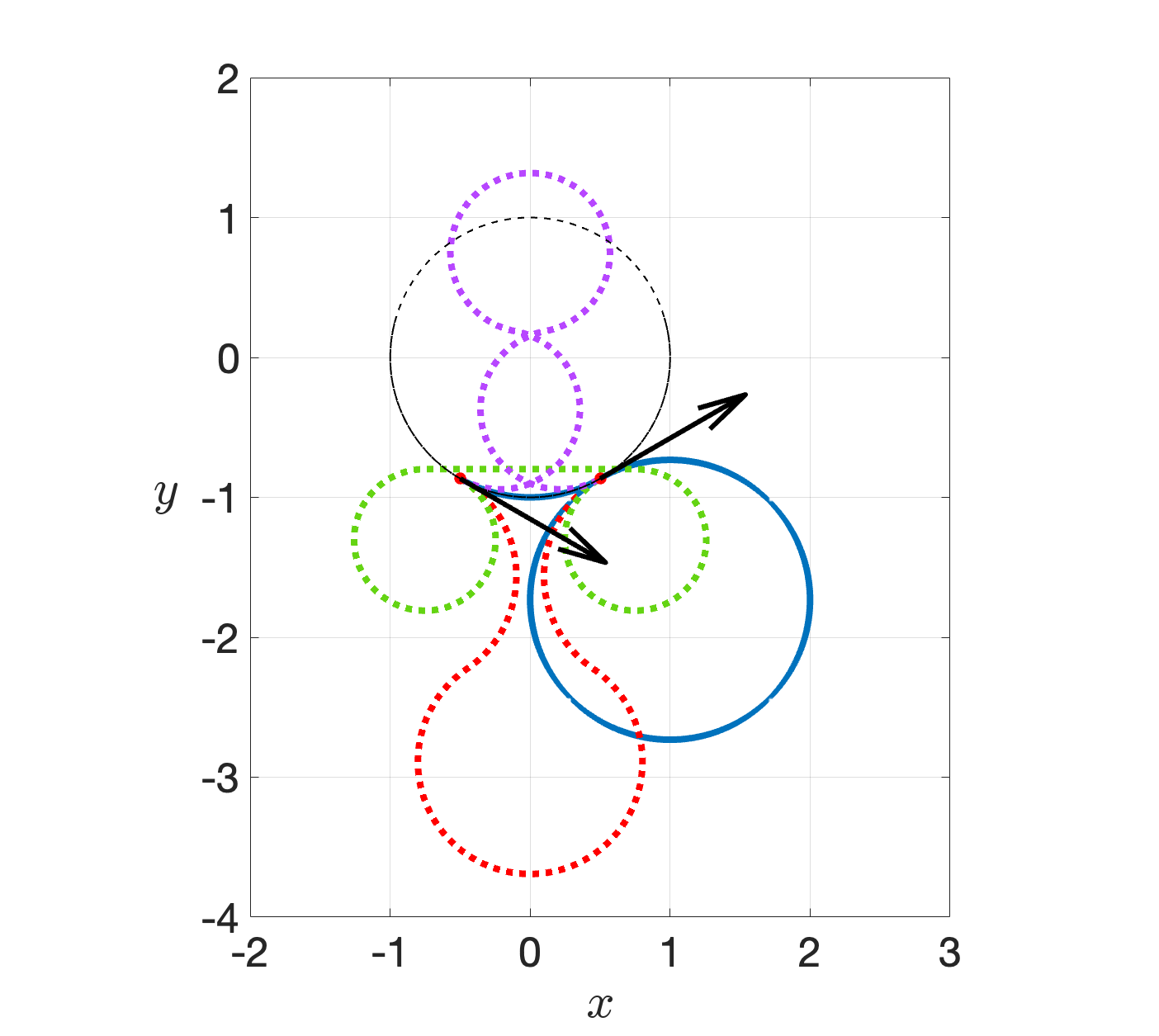} \\[0mm]
{\sf\small\hspace{7mm} (a) $\beta = \pi/3$: Solid curve LR (blue) with $a = 1$; dotted curves, RLR (red) with $a \approx 1.246$, LRL (purple) with $a \approx 1.754$, RSR (green) with $a \approx 1.978$.}
\end{center}
\end{minipage}
\begin{minipage}{80mm}
\begin{center}
\includegraphics[width=90mm]{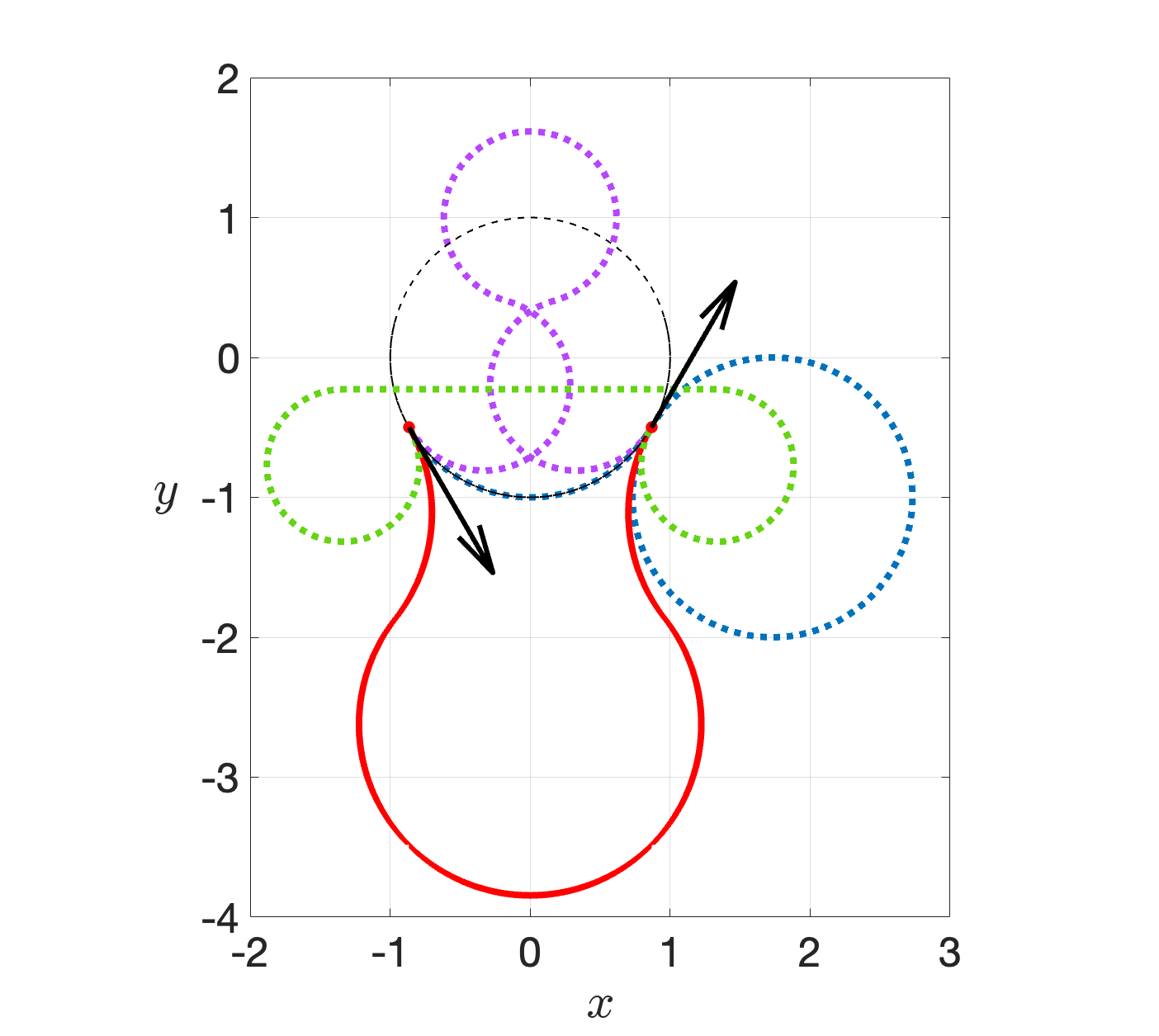} \\[0mm]
{\sf\small\hspace{7mm} (b) $\beta = 2\,\pi/3$: Solid curve RLR (red) with $a \approx 0.817$; dotted curves, LR (blue) with $a = 1$, LRL (purple) with $a \approx 1.620$, RSR (green) with $a \approx 1.836$.}
\end{center}
\end{minipage}
\begin{minipage}{80mm}
\begin{center}
\includegraphics[width=90mm]{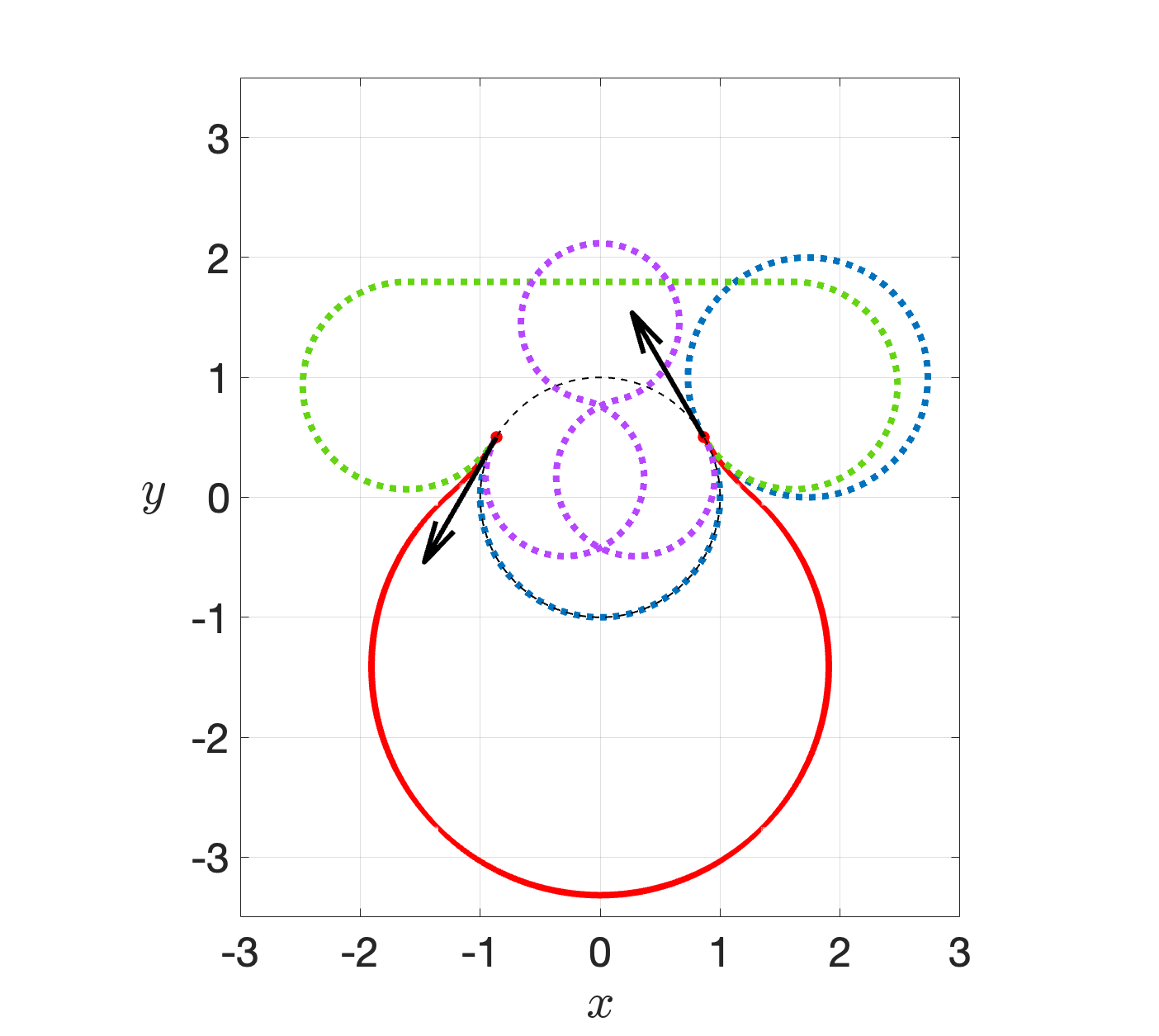} \\[0mm]
{\sf\small\hspace{7mm} (c) $\beta = 4\,\pi/3$: Solid curve RLR (red) with $a \approx 0.525$; dotted curves, LR (blue) with $a = 1$, RSR (green) with $a \approx 1.157$, LRL (purple) with $a \approx 1.514$.}
\end{center}
\end{minipage}
\begin{minipage}{80mm}
\begin{center}
\includegraphics[width=90mm]{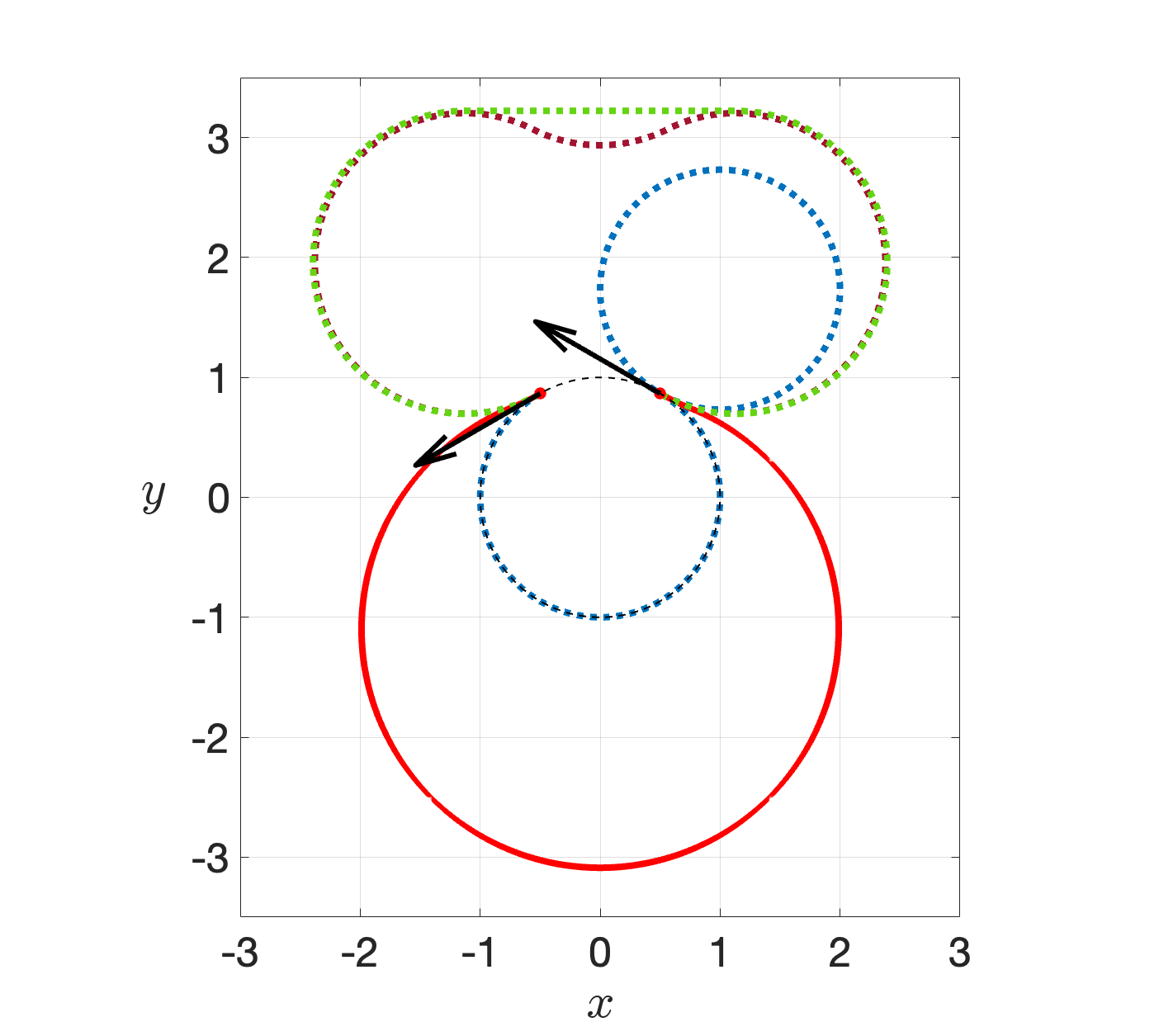} \\[0mm]
{\sf\small\hspace{7mm} (d) $\beta = 5\,\pi/3$: Solid curve RLR (red) with $a \approx 0.503$; dotted curves, RSR (green) with $a \approx 0.792$, RLR (brown) with $a \approx 0.798$, LR (blue) with $a = 1$.}
\end{center}
\end{minipage}
\
\caption{\sf Example~2---Problem instances for Proposition~\ref{COC}: Curves of minimax curvature (and of particular length) between oriented points placed on a unit circle.  The solid curves are optimal (identified by numerical experiments) while the dotted ones are only critical.}
\label{fig:COC_ex}
\end{figure}

In each of the problem instances, one curve of $CO$ type, two curves of $CCC$ type, and one curve of $CSC$ type have been found as solutions of Problem~(Ps) by the AMPL--Knitro suite.  Of these four distinct solutions in each case, the curve with the smallest $a$ was identified as the optimal curve for that case.

Similarly to Example~\ref{ex:SCS}, the loop in each of the $LR$ curves presented in Figure~\ref{fig:COC_ex} can be ``flipped over'' or ``reflected about'' the circle to get ``symmetric'' curves of type $L$ with $\xi_1 = \beta + 2\,\pi$, with the same endpoints and maximum curvatures.  These reflected curves have also been encountered as computational solutions to Problem~(Ps), but are not being shown in Figure~\ref{fig:COC_ex}.

We note that if $t_f \neq \beta + \pi/2$ then one does not encounter a solution curve (optimal or only critical) which contains a loop.

\subsubsection{Example 3}
Consider the problem of finding a curve of minimax curvature between the oriented points $(0,0,-\pi/3)$ and $(0.4,0.4,-\pi/6)$---note that the same oriented endpoints were used in~\cite{Kaya2017} for finding the shortest curve of bounded curvature, i.e., the Markov--Dubins path.  For various lengths, namely for $t_f = 0.8, 1.3, 2$ and $2.5$, here we find curves minimizing the bound $a$ on the curvature.  Figure~\ref{fig:maxcurv}(a)--(d) depict these curves found as a solution of Problem~(Ps) by employing the AMPL--Knitro software suite.

In Figure~\ref{fig:maxcurv}(a)--(d), the optimal curves are shown as solid curves.  These curves have been identified amongst all the critical solutions to Problem~(Ps) (of the allowed types) that the AMPL--Knitro suite could find, as the ones with the smallest curvature, or indeed with the biggest turning radius.  All other critical curves are shown as dotted curves in Figure~\ref{fig:maxcurv}(a)--(d).  A summary of all these solutions in each part of Figure~\ref{fig:maxcurv} can be given as follows, with arc lengths and curvatures correct up to 10 dp.

\begin{figure}[t!]
\begin{minipage}{80mm}
\begin{center}
\includegraphics[width=90mm]{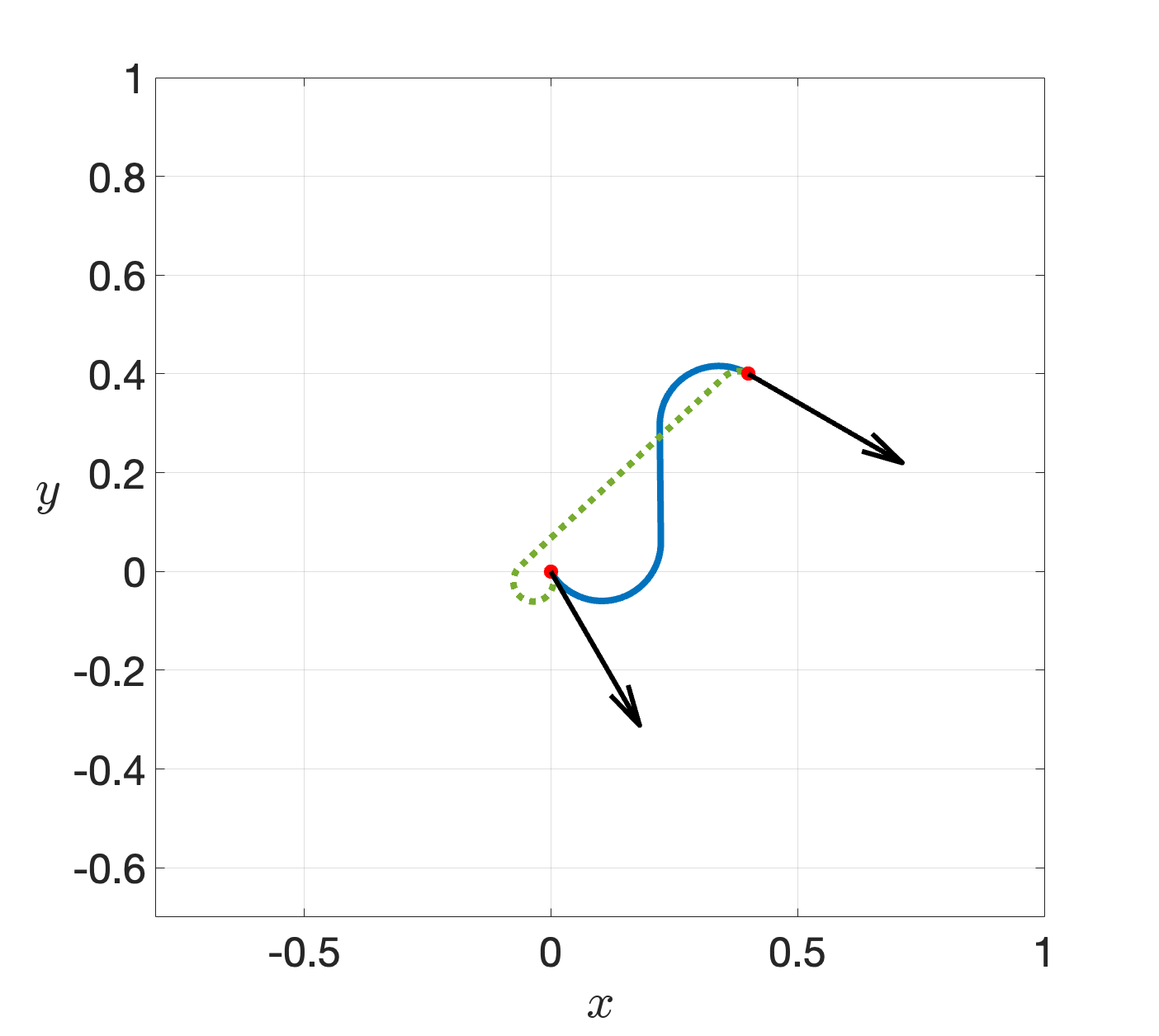} \\[0mm]
{\sf\small\hspace{7mm} (a) $t_f = 0.8$: Solid curve LSR with $a \approx 8.366$.}
\end{center}
\end{minipage}
\begin{minipage}{80mm}
\begin{center}
\includegraphics[width=90mm]{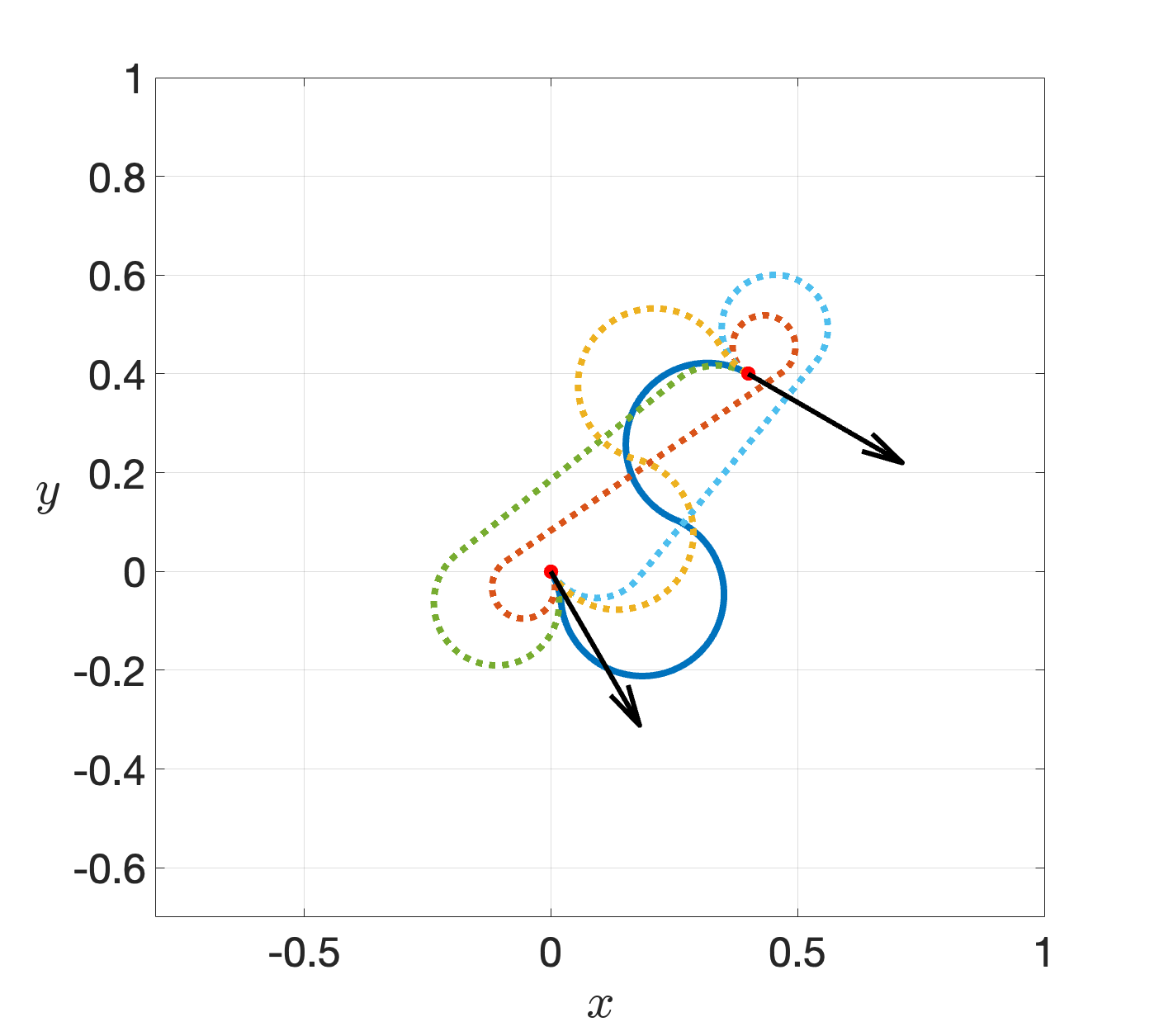} \\[0mm]
{\sf\small\hspace{7mm} (b) $t_f = 1.3$: Solid curve RLR with $a \approx 6.048$.}
\end{center}
\end{minipage}
\\[0mm]
\begin{minipage}{80mm}
\begin{center}
\includegraphics[width=90mm]{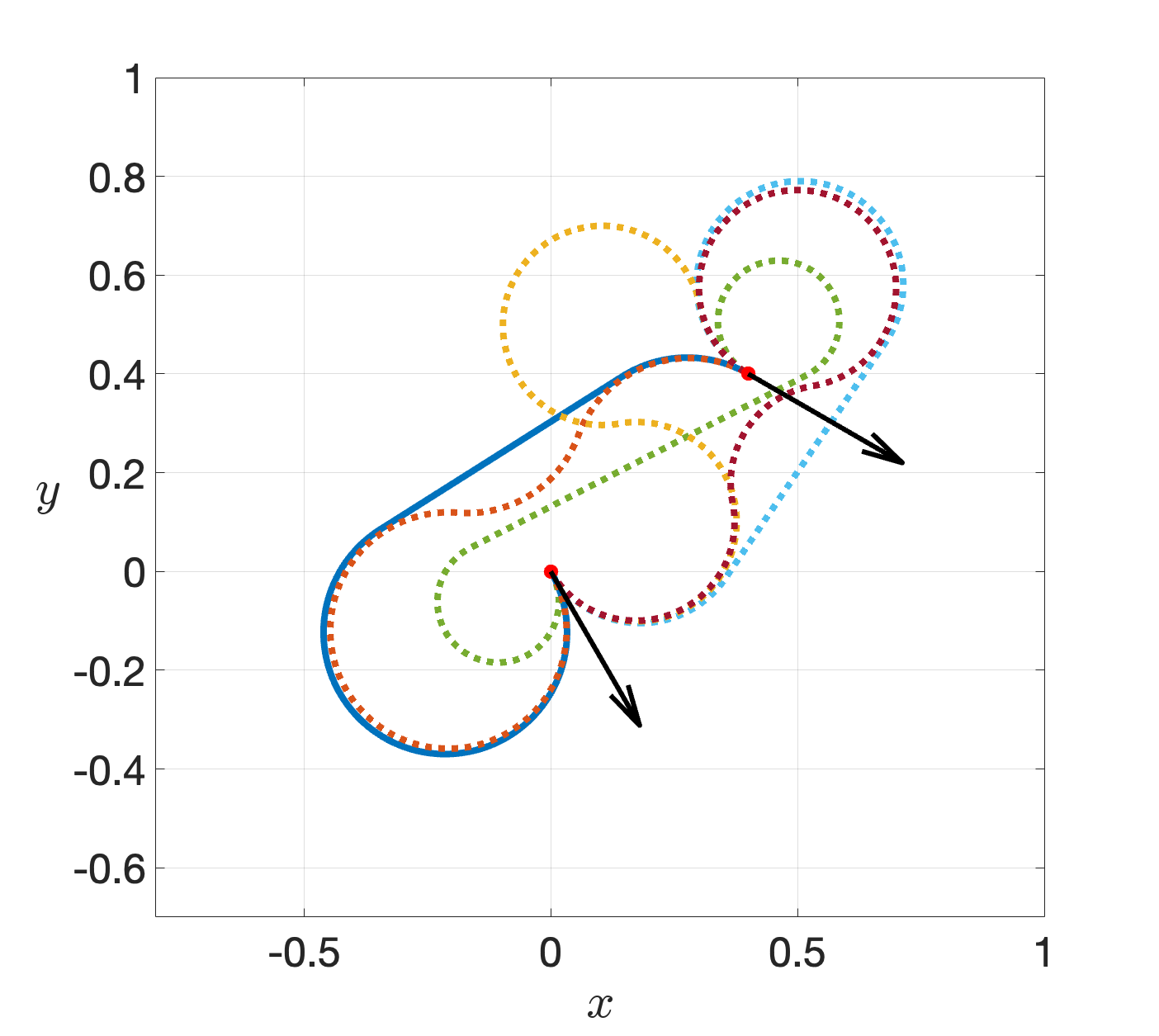} \\[0mm]
{\sf\small\hspace{7mm} (c) $t_f = 2$: Solid curve RSR with $a \approx 4.055$.}
\end{center}
\end{minipage}
\begin{minipage}{80mm}
\begin{center}
\includegraphics[width=90mm]{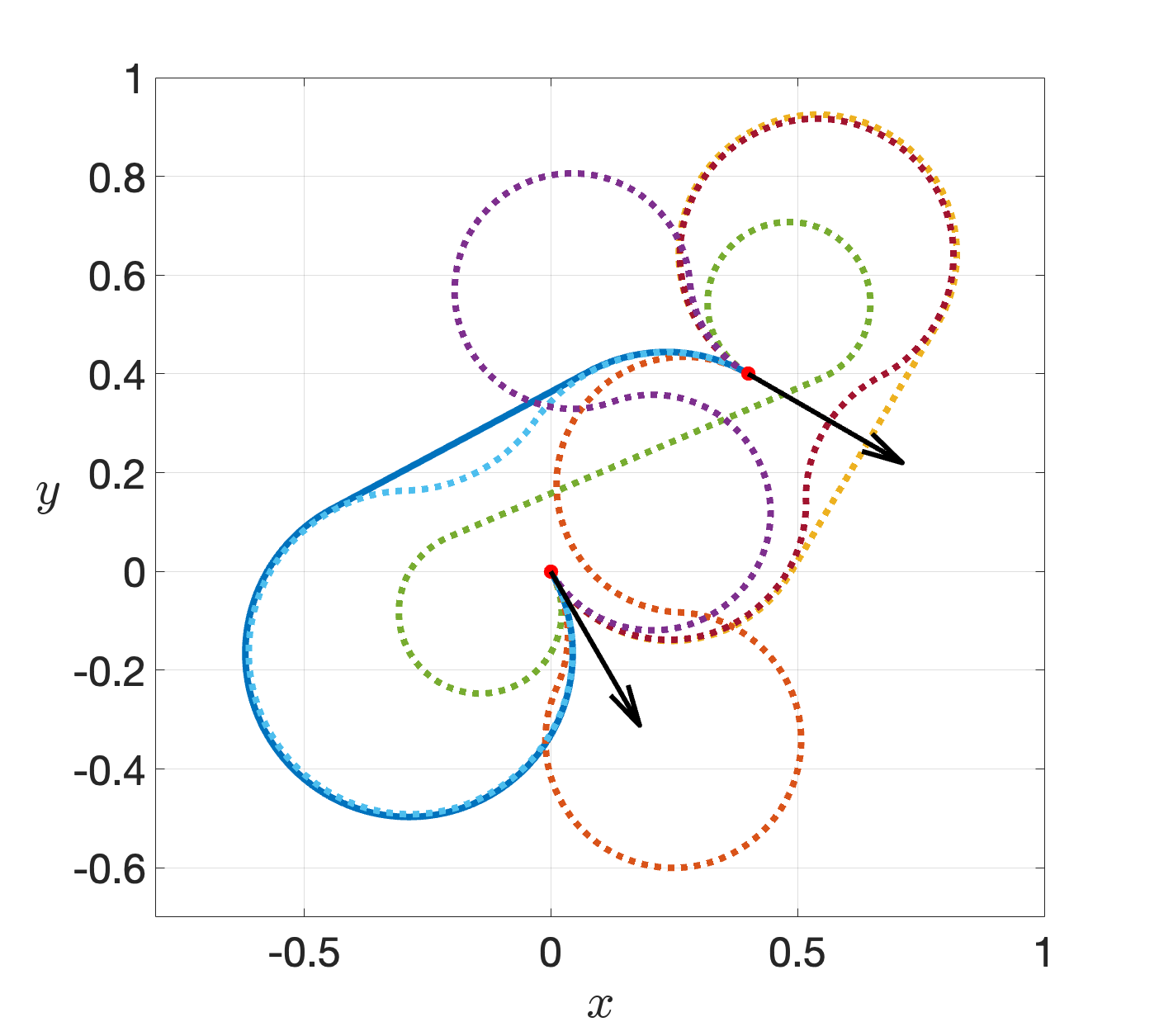} \\[0mm]
{\sf\small\hspace{7mm} (d) $t_f = 2.5$: Solid curve RSR with $a \approx 3.017$.} 
\end{center}
\end{minipage}
\
\caption{\sf Example 3---Curves of minimax curvature between the oriented points $(0,0,-\pi/3)$ and $(0.4,0.4,-\pi/6)$.  The solid (blue) curves are optimal (identified by numerical experiments) while the dotted ones (in various other colours) are only critical.}
\label{fig:maxcurv}
\end{figure}

\begin{itemize}
\item[(a)]  $t_f = 0.8$: The optimal curve has $a = 8.3661513485$ and is of type $L_{\xi_1}S_{\xi_3}R_{\xi_5}$ (with $\xi_2 = \xi_4 = 0$), where $\xi_1 = 0.3141136578$, $\xi_3 = 0.2343580660$, $\xi_5 = 0.2515282761$.  Only one other critical curve is found that has $a = 24.6216106492$ and is of type $RSR$.

\item[(b)]  $t_f = 1.3$: The optimal curve has $a = 6.0477371511$ and is of type $R_{\xi_2}L_{\xi_4}R_{\xi_5}$ with\linebreak $\xi_2 = 0.0684530840$, $\xi_4 = 0.6932888171$, $\xi_5 = 0.5382580988$. Four other critical curves are found which respectively have $a = 7.8842574114$, $a = 15.7179879207$, $a = 9.3041564227$, $a = 6.4570352671$, and are of types $RSR$, $RSL$, $LSL$, $LRL$.

\item[(c)]  $t_f = 2$: The optimal curve has $a = 4.0557744873$ and is of type $R_{\xi_2}S_{\xi_3}R_{\xi_5}$ with\linebreak $\xi_2 = 1.1520596173$, $\xi_3 = 0.5799046398$, $\xi_5 = 0.2680357429$. Five other critical curves are found which respectively have $a = 8.1329787696$, $a = 4.1795061273$, $a = 4.7799043550$, $a = 4.9575871447$, $a = 5.0128303633$ and are of types $RSL$, $RLR$, $LSL$, $LRL$, $LRL$.

\item[(d)]  $t_f = 2.5$: The optimal curve has $a = 3.0172721767$ and is of type $R_{\xi_2}S_{\xi_3}R_{\xi_5}$ with\linebreak $\xi_2 = 1.5726285431$, $\xi_3 = 0.5911279480$, $\xi_5 = 0.3362435090$.  Six other critical curves are found which respectively have $a = 6.0662510507$, $a = 3.0516505811$, $a = 3.8632611845$, $a = 3.5528730863$, $a = 4.1925834507$, $a = 3.6102865904$ and are of types $RSL$, $RLR$, $RLR$, $LSL$, $LRL$, $LRL$.
\end{itemize}

\section{Conclusion}
\label{sec:conclusion}
We have presented a classification of the solutions to the problem of finding planar curves of minimax curvature between two endpoints with the specified endpoint tangents (Problem~(P)).  We devised a numerical procedure to find the critical points of Problem~(P), or the $\infty$-elastica, of the classification types we have presented.  Of these critical solutions, the one with the smallest curvature is the (global) solution to Problem~(P), which we refer to as a curve of minimax curvature.

A natural extension of this paper would be the study of interpolating curves which not only satisfy the endpoint constraints, but must also pass through a number of specified intermediate points.

It would also be interesting to study another extension of Problem~(P), from the curves in $\dR^2$ to the curves in $\dR^n$.  In $\dR^3$ alone, it is highly likely that one would encounter curve segments of different types in addition to circular and straight line types.

A further extension from the Euclidean space to curved spaces would be of great interest, where the curves of minimax curvature would now lie in a Riemannian manifold.

\section*{Acknowledgments}
The authors would like to thank the two anonymous reviewers whose useful suggestions and insightful comments improved the manuscript.

\small

\end{document}